\title{Toric rings of perfectly matchable subgraph polytopes}
\author{Kenta Mori}
\date{}
\keywords{finite graph, perfectly matchable subgraph polytope, compressed, Gorenstein}
\newtheorem{thm}{Theorem}[section]
\newtheorem{prop}[thm]{Proposition} 
\newtheorem{lemma}[thm]{Lemma}
\newtheorem{cor}[thm]{Corollary}
\numberwithin{thm}{section}  
\numberwithin{equation}{section} 
\theoremstyle{definition}
\newtheorem{example}[thm]{Example}
\newtheorem{remark}[thm]{Remark}
\def\Pc{{\mathcal P}}
\def\NZQ{\mathbb}               
\def\NN{{\NZQ N}}
\def\ZZ{{\NZQ Z}}
\def\RR{{\NZQ R}}
\def\ab{{\mathbf a}}
\def\bb{{\mathbf b}}
\def\eb{{\mathbf e}}
\def\ub{{\mathbf u}}
\def\wb{{\mathbf w}}
\def\xb{{\mathbf x}}
\def\yb{{\mathbf y}}
\def\opn#1#2{\def#1{\operatorname{#2}}} 
\opn\gr{gr}
\def\Pc{{\mathcal P}}
\def\xb{{\mathbf{x}}}
\def\int{{\rm int}}
\def\stab{{\rm Stab}}
\def\PMSG{{{\mathcal P}_G}}
\def\PMSGd{{{\mathcal P}_{G'}}}
\begin{document}

\maketitle

\begin{center}
{\scriptsize Department of Mathematical Sciences,
	Graduate School of Science and Technology\\
	Kwansei Gakuin University,
	Sanda, Hyogo 669-1337, Japan\\
k-mori@kwansei.ac.jp
}
\end{center}

\begin{abstract}
The perfectly matchable subgraph polytope of a graph is a (0,1)-polytope associated with the vertex sets of matchings in the graph.
In this paper, we study algebraic properties (compressedness, Gorensteinness) 
of the toric rings of perfectly matchable subgraph polytopes. 
In particular, we give a complete characterization of a graph whose perfectly matchable subgraph polytope is compressed.
\end{abstract}

\section{Introduction}

A \textit{lattice polytope} $\mathscr{P}\subset\mathbb{R}^n$ is a convex polytope such that any vertex of $\mathscr{P}$ belongs to $\mathbb{Z}^n$.
Let $K[\mathbf{x}^{\pm1},s]=K[x^{\pm1}_1,\dots, {x}^{\pm1}_n,s]$ be a Laurent polynomial ring in $n+1$ variables over a field $K$.
For a lattice point $\alpha=(\alpha_1,\dots,\alpha_n)\in\ZZ^n$, we define $\xb^{\alpha}=x^{\alpha_1}_1\cdots x^{\alpha_n}_n\in K[\mathbf{x}^{\pm1},s]$.
If $\mathscr{P}\cap\mathbb{Z}^n=\{\ab_1,\dots,\ab_m\}$, then the \textit{toric ring} $K[\mathscr{P}]$ of $\mathscr{P}$ is the $K$-subalgebra of $K[\mathbf{x}^{\pm1},s]$ generated by the monomials $\mathbf{x}^{\mathbf{a}_1}s,\dots,\mathbf{x}^{\mathbf{a}_m}s\in K[\mathbf{x}^{\pm1},s]$. 
Furthermore, the \textit{toric ideal} $I_{\mathscr{P}}$ is the defining ideal of $K[\mathscr{P}]$, i.e., the kernel of a surjective ring homomorphism $\pi:K[y_1,\dots,y_m]\rightarrow K[\mathscr{P}]$ defined by $\pi(y_i)={\mathbf{x}^{\mathbf{a}_i}}s$ for $i=1,2,\dots,m$. It is known that $I_\mathscr{P}$ is generated by homogeneous binomials.
See, e.g., \cite{binomialideals, Stu} for details.

Compressed polytopes were defined by Stanley \cite{Stan} and have been studied from the viewpoint of polyhedral combinatorics, statistics, and optimization. 
A lattice polytope $\mathscr{P}$ is called \textit{compressed} if
the initial ideal of $I_\Pc$ is generated by squarefree monomials with respect to any reverse lexicographic order \cite{Sul}.
It is known that \cite[Corollary~8.9]{Stu}
the initial ideal of $I_\Pc$ is generated by squarefree monomials if and only if the
corresponding triangulation of $\mathscr{P}$ using only the lattice points in $\mathscr{P}$ is unimodular.
Hence $\Pc$ is compressed if and only if every pulling triangulation of $\mathscr{P}$ using only the lattice points in $\mathscr{P}$ is unimodular. 
Sullivant \cite{Sul} proved that a lattice polytope is compressed
if and only if it is \textit{$2$-level}, which is important in optimization theory.
For example, the convex polytope of all $n \times n$ doubly stochastic matrices,
hypersimplices, the order polytopes of finite posets, 
edge polytopes of bipartite graphs and complete multipartite graphs, and
the stable set polytopes of perfect graphs
are compressed.

On the other hand, $\Pc \subset \RR^n$ is said to be {\it normal} if $K[\Pc]$ is a normal semigroup ring.
It is known that 
\begin{itemize}
\item
$\Pc$ is normal if and only if
every vector in $k \Pc \cap L_\Pc$ is a sum of $k$ vectors from $\Pc \cap \ZZ^n$, 
where $L_\Pc$ is the sublattice of $\ZZ^n$ spanned by $\Pc\cap\ZZ^n$;
\item
$\Pc$ is normal if there exists a monomial order such that 
the initial ideal of $I_\Pc$ is generated by squarefree monomials. 
In particular, $\Pc$ is normal if $\Pc$ is compressed. 
\end{itemize}
A lattice polytope $\Pc \subset \RR^n$ has the \textit{integer decomposition property} (IDP) if every vector in $k \Pc \cap\ZZ^n$ is a sum of $k$ vectors from $\Pc\cap\ZZ^n$.
In particular, $\Pc$ is normal if $\Pc$ has IDP.
However, the converse does not hold in general.

A lattice polytope $\Pc \subset \RR^n$ is said to be \textit{reflexive} if ${\bf 0}$ is the unique lattice point in its interior and the dual polytope
$$
\Pc^* := \{ x\in\RR^n : x \cdot y\leq 1 \mbox{ for any } y\in \Pc\}
$$
is again a lattice polytope. 
Here $x\cdot y$ is the inner product of $x$ and $y$.
Note that each vertex of $\Pc^*$ corresponds to a facet of $\Pc$.
Two lattice polytopes $\Pc \subset\RR^n$ and $\Pc'\subset\RR^{n'}$ are said to be \textit{unimodularly equivalent} if there exists an affine map from the affine span 
$${\rm aff} (\Pc)= \left\{ \sum_{i=1}^r \lambda_i \alpha_i : 1 \le r \in \ZZ, \alpha_i \in \Pc, \lambda_i \in \RR, \sum_{i=1}^r \lambda_i=1 \right\}$$
of $\Pc$ to the affine span ${\rm aff} (\Pc')$ of $\Pc'$ that maps $\ZZ^n\cap{\rm aff}(\Pc)$ bijectively onto $\ZZ^{n'}\cap{\rm aff}(\Pc')$ and that maps $\Pc$ to $\Pc'$.
A lattice polytope $\Pc \subset \RR^n$ of dimension $n$ is called \textit{Gorenstein of index $\delta$} if $\delta\Pc=\{\delta a:a\in\Pc\}$ is unimodularly equivalent to a reflexive polytope. In particular, a reflexive polytope is Gorenstein of index 1.
Note that a lattice polytope $\Pc \subset \RR^n$ of dimension $n$ is Gorenstein of index $\delta$ if and only if there exist 
a positive integer $\delta$ and a lattice point $\alpha \in\delta (\Pc\setminus\partial\Pc)\cap\ZZ^n$ such that $\delta \Pc - \alpha$ is a reflexive polytope, where $\partial\Pc$ is the boundary of $\Pc$.
Reflexive polytopes are related to mirror symmetry and studied in many areas of mathematics. 
They are key combinatorial tools for constructing topologically mirror-symmetric pairs of Calabi-Yau varieties, as shown by Batyrev \cite{Batyrev}.
It is known that a lattice polytope $\Pc$ is Gorenstein if and only if  
the Ehrhart ring 
$$K[{\xb}^{\alpha} s^{m}:\alpha\in mP\cap\ZZ^{n},m\in\ZZ_{\ge 0}]\subset K[\mathbf{x}^{\pm1},s]$$
of $\Pc$ is Gorenstein.
On the other hand, the Ehrhart ring of $\Pc$
coincides with the toric ring of $\Pc$
if and only if $\Pc$ has IDP.

In the present paper, we study conditions for perfectly matchable subgraph polytopes to be compressed or Gorenstein.
Let $G =(V,E)$ be a graph on the vertex set $V=[n]:=\{1,2,\dots,n\}$ and the edge set $E$.
Throughout this paper, all graphs are assumed to be finite and simple.
A $k$-matching of $G$ is a set of $k$ pairwise non-adjacent edges of $G$.
If a matching $M$ includes all vertices of $G$, then $M$ is called a \textit{perfect matching}.
We say that $S\subset V$ induces a \textit{perfectly matchable subgraph} of $G$ if the induced subgraph $G[S]$ of $G$ on the vertex set $S$ has a perfect matching.
Let $\mathscr{W}(G)$ be the set of all such subsets of $V$, and adopt the convention that $\emptyset\in\mathscr{W}(G)$, i.e., that the empty subgraph is perfectly matchable. 
Given a subset $S \subset V$, let $\rho(S) = \sum_{i \in S} \mathbf{e}_i \in \RR^n$,
where $\mathbf{e}_i$ is the $i$th unit vector in $\RR^n$.
In particular, $\rho(\emptyset) $ is the zero vector.
The \textit{perfectly matchable subgraph polytope} of $G$, denoted by $\PMSG$, is the convex hull of 
$\{ \rho(S) \in \RR^n :  S \in\mathscr{W}(G) \}$.

The perfectly matchable subgraph polytope
of a graph is defined in \cite{PMS}.
The motivation of their study on  perfectly matchable subgraph polytopes is to solve optimization problems that arise in practice
(e.g., a bus driver scheduling problem).
In optimization theory, compressed polytopes are important
since 
semidefinite programming relaxations are very efficient for 
compressed polytopes (see, e.g, \cite{GPT}).

Recently, perfectly matchable subgraphs of graphs
appear in the study of $h^*$-polynomials of lattice polytopes.
A graph $G$ is called \textit{$k$-partite} if 
the vertex set of $G$ can be partitioned into $k$ different independent sets $V_1 , \dots , V_k$. 
When $k=2$, it is called a \textit{bipartite graph}.
If $G$ is a bipartite graph with a partition $V_1\cup V_2=[n]$, let $\widehat{G}$ be a connected bipartite graph on $[n+2]$ whose edge set is
$E(\widehat{G})=E(G)\cup\{\{i,n+1\}:i\in V_1\}\cup\{\{j,n+2\}:j\in V_2\cup\{n+1\}\}$.
It is known \cite[Proposition~3.4]{OT1} that 
$$I_{\widehat{G}}(x)=\sum_{S \in \mathscr{W}(G)} x^{|S|},$$
where $I_{\widehat{G}}(x)$ is the interior polynomial of $\widehat{G}$
that is introduced by K\'{a}lm\'{a}n \cite{kal} 
as a version of the Tutte polynomials for hypergraphs. 
It was shown \cite{kalpo} that the $h^*$-polynomial of
the edge polytope of a bipartite graph $G$ coincides with the interior polynomial $I_G(x)$ of a hypergraph induced by $G$.
Using these facts, several results on 
$h^*$-polynomials of several important classes of
lattice polytopes are obtained \cite{DK,OT1,OT2,OT3}.

If $G$ is the disjoint union of graphs $G_1$ and $G_2$,
then $\PMSG$ is the product of ${\mathcal P}_{G_1}$ and ${\mathcal P}_{G_2}$.
Hence, $\PMSG$ is compressed (resp. Gorenstein) if and only if both ${\mathcal P}_{G_1}$ and ${\mathcal P}_{G_2}$
are compressed (resp. Gorenstein).
Thus, when we are studying such properties, we may assume that $G$ is connected.

 The first main result of the present paper is a complete characterization of compressed 
perfectly matchable subgraph polytopes.
A \textit{complete $k$-partite graph} denoted by $K_{|V_1|,\dots,|V_k|}$ is a
$k$-partite graph
with a partition $V_1 \cup \cdots \cup V_k$ of its vertex set such that $\{s,t\}$ is an edge of $G$ for 
any $s\in V_i$, any $t\in V_j$, and any $1\le i < j\le k$.
A complete $n$-partite graph all of whose independent sets $V_i$ have only one vertex is called a \textit{complete graph}, and denoted by $K_n$.
A vertex $v$ of a connected graph $G$ is called a \textit{cut vertex} if the graph obtained by the removal of $v$ from $G$ is disconnected. Given a graph $G$, a \textit{block} of $G$ is a maximal connected subgraph of $G$ without cut vertices.

\begin{thm}
\label{compressed}
Let $G$ be a connected graph.
Then $\PMSG$ is compressed
if and only if all blocks of $G$ are complete bipartite graphs except for at most one block, which is either $K_4$ or $K_{1,1,n}$.
\end{thm}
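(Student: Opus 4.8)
\emph{Reduction to blocks.} For $W\subseteq V(G)$ a subset $S\subseteq W$ lies in $\mathscr{W}(G)$ exactly when it lies in $\mathscr{W}(G[W])$, so $\PMSG\cap\{x:x_i=0\text{ for all }i\notin W\}=\mathcal{P}_{G[W]}$; since $\{x_i\ge 0\}$ is valid on $\PMSG\subseteq[0,1]^n$, this intersection is a face of $\PMSG$, hence $\mathcal{P}_{G[W]}$ is a face of $\PMSG$. As faces of compressed (equivalently, by \cite{Sul}, $2$-level) polytopes are compressed, if $\PMSG$ is compressed then $\mathcal{P}_H$ is compressed for every induced subgraph $H$ of $G$, in particular for every block of $G$, since blocks are induced subgraphs. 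So the necessity direction follows once we prove: (I) for a $2$-connected graph $B$ (including a single edge), $\mathcal{P}_B$ is compressed if and only if $B$ is complete bipartite, $K_4$, or $K_{1,1,n}$; and (II) $\PMSG$ is not compressed whenever $G$ has two blocks that are not complete bipartite. For sufficiency we also need the positive part of (I) together with a gluing statement.

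\emph{The $2$-connected case.} If $B=K_{a,b}$ with parts $U,W$, then $S$ is perfectly matchable iff $|S\cap U|=|S\cap W|$, so $\mathcal{P}_{K_{a,b}}=\{x:\mathbf{0}\le x\le\mathbf{1},\ x(U)=x(W)\}$, whose facets are only $x_i\ge 0$ and $x_i\le 1$; on the vertices these forms take just the values $0,1$, so $\mathcal{P}_{K_{a,b}}$ is $2$-level. For $K_4$ and $K_{1,1,n}$ one lists the (few) facets and checks $2$-levelness directly: e.g. $\mathcal{P}_{K_4}$ is the bipyramid over the octahedron $\Delta_{4,2}$ with apices $\mathbf{0},\mathbf{1}$, whose $16$ facets are given by $x_i\ge 0$, $x_i\le 1$, and $\sum_{j\ne i}x_j-x_i\in\{0,2\}$, on each of which the defining form takes exactly two values; the verification for $K_{1,1,n}$ is analogous. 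Conversely, if $B$ is $2$-connected and none of complete bipartite, $K_4$, $K_{1,1,n}$, one uses the known linear description of $\mathcal{P}_B$ (see \cite{PMS}) to exhibit a facet on which the defining form takes at least three values on the vertices. In the bipartite case there are non-adjacent $u\in U$, $w\in W$, and a deficiency inequality $x(A)\le x(N(A))$ with $u\in A$ does this, the condition $2$-connectivity (hence minimum degree $\ge 2$) producing the perfectly matchable subsets realising the extra value (e.g. two neighbours of $u$ matched outside $A$). In the non-bipartite case one argues via the odd components in the Edmonds--Gallai structure, reducing to a short list of small configurations -- among them $C_5$, $K_5$, $K_5-e$, and $K_{1,1,n}+e$ for $n\ge 3$ -- each ruled out by hand. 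Either way $\mathcal{P}_B$ is not $2$-level.

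\emph{Gluing, and at most one bad block.} Call a block \emph{bad} if it is not complete bipartite. The central tool is a gluing lemma: if $G=G_1\cup_v G_2$ is obtained by identifying a vertex of $G_1$ with a vertex of $G_2$, then $\PMSG$ is compressed if and only if ($\mathcal{P}_{G_1}$ is compressed and every block of $G_2$ is complete bipartite) or the symmetric statement holds. For ``$\Leftarrow$'' one describes the facets of $\PMSG$ from those of $\mathcal{P}_{G_1}$ and $\mathcal{P}_{G_2}$, splitting $\mathscr{W}(G)$ according to whether $v\in S$ and, if so, on which side $v$ is matched; attaching complete-bipartite blocks introduces only facets of ``coordinate'' type and their liftings, so $2$-levelness survives. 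For ``$\Rightarrow$'': if either $\mathcal{P}_{G_i}$ is not compressed we are done, since $G_i$ is an induced subgraph of $G$; otherwise each of $G_1,G_2$ has a bad block, and since the block--cut tree is a tree, $G$ contains as an induced subgraph two triangles joined by an induced path of some length $\ell\ge 0$ (the bowtie $K_3\cup_v K_3$ when $\ell=0$), and one exhibits a three-valued facet directly -- for the bowtie on triangles $\{a,b,c\}$ and $\{c,d,e\}$ the facet $x_a+x_b+x_c-x_d-x_e\le 2$ has the three values $-2,0,2$ on its vertices. Iterating the lemma over the block tree yields the sufficiency direction of the theorem, and its ``$\Rightarrow$'' direction gives (II), completing necessity.

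\emph{Main obstacle.} The technical core is obtaining a workable handle on the facets of $\PMSG$ for non-bipartite $G$ (the inequality description being far more delicate than in the bipartite case) and using it both to pin down exactly the boundary of (I) -- why $K_4$ and $K_{1,1,n}$ survive while $K_5$, $K_5-e$, and $K_{1,1,n}+e$ do not -- and to control the interaction at the cut vertex in the ``$\Leftarrow$'' direction of the gluing lemma, which I expect to be the most intricate step.
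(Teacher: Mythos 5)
Your reduction to blocks via faces of compressed polytopes, and your two-triangles/bowtie computation, match the paper's necessity argument (Lemma~\ref{inducedcompressed} and Lemma~\ref{twotriangles}), and your verifications for $K_{a,b}$ and $K_4$ are correct. But the two hardest steps are deferred rather than proved, and one of them rests on a false intermediate claim. First, the negative half of your classification (I) of $2$-connected blocks is only a placeholder: ``exhibit a facet on which the form takes three values'' and ``argue via Edmonds--Gallai, reducing to a short list.'' The paper's actual arguments here are concrete and not routine: for the non-bipartite case one reduces via Maffray--Trotter (Proposition~\ref{lineperfect}) to an odd cycle $C_{2n+1}$, $n\ge 2$, whose induced subgraph is critical, so that $x(S)\le 2n$ is facet-inducing and takes the $n+1\ge 3$ values $0,2,\dots,2n$ (Lemma~\ref{nooddcycle}); for a $2$-connected bipartite block that is not complete bipartite one needs an induction on the length of even cycles showing every even cycle of length $\ge 6$ induces $K_{n,n}$ (Lemma~\ref{completebipartite}) --- your sketch with ``two neighbours of $u$ matched outside $A$'' does not verify the connectivity hypotheses of Proposition~\ref{bipartitefacet} needed for the deficiency inequality to be facet-inducing, nor does it produce the third value.

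Second, and more seriously, the entire sufficiency direction hangs on the ``$\Leftarrow$'' half of your gluing lemma, which you justify by asserting that attaching complete bipartite blocks ``introduces only facets of coordinate type and their liftings.'' That is false already for trees: for the path $1\mbox{--}2\mbox{--}3\mbox{--}4$ the inequality $x_3-x_2-x_4\le 0$ is facet-inducing by Proposition~\ref{bipartitefacet}, and it is not of coordinate type. The paper does not glue at all; it proves sufficiency by a direct global case analysis of \emph{every} facet of $\PMSG$ using the Balas--Pulleyblank descriptions (Propositions~\ref{nonbipartitefacet} and~\ref{bipartitefacet}): for a deficiency facet attaining three values one extracts an even cycle of length $\ge 6$ crossing $S$ and $V_1\setminus S$ and derives a contradiction from the hypothesis that its induced subgraph is complete bipartite, and in the non-bipartite case one splits on whether $S$ is stable and whether $G=G[S\cup\Gamma(S)]$, using the two-triangles restriction and a parity argument. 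Until you supply a proof of your gluing lemma that actually controls all facets of the vertex-amalgam (or replace it by such a global facet analysis), the ``if'' direction of the theorem is not established.
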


In particular, if $\PMSG$ is compressed, then the line graph of $G$
is perfect by Proposition~\ref{lineperfect}.

The second main result of the present paper is a characterization of Gorenstein 
perfectly matchable subgraph polytopes of bipartite graphs.
For any $S\subset \mathit{V}$, let 
$\Gamma(S)$ denote the subset of $V \setminus S$ that consists of vertices
adjacent to at least one vertex in $S$. 
Theorem~\ref{secondmain} follows from 
Proposition~\ref{IDP},
Theorem \ref{2Gor} and Corollary \ref{pmsedgeGor}.

\begin{thm}
\label{secondmain}
Suppose that a connected bipartite graph
$G = (V_1 \cup V_2 , E)$ has a vertex $v$ with $\deg(v) \ge 2$ such that $v$ is not a cut vertex.
Then the following conditions are equivalent{\rm :}
\begin{itemize}
    \item[{\rm (i)}] 
$K[\PMSG]$ is Gorenstein{\rm ;}
        \item[{\rm (ii)}] 
$\PMSG$ is Gorenstein{\rm ;}
    \item[{\rm (iii)}] 
    $\PMSG$ is Gorenstein of index $2${\rm ;}
    \item[{\rm (iv)}] 
$G$ has a perfect matching and, for any subset $\emptyset \ne S\subsetneq V_1$ such that
$G[S\cup\Gamma(S)]$ and $G[(V_1\setminus S)\cup(V_2\setminus \Gamma(S))]$ are connected,
    we have $|S| +1 = |\Gamma(S)|$.
    
\end{itemize}
Moreover, if $G$ is 2-connected, then the above conditions are equivalent to
\begin{itemize}
    \item[{\rm (v)}] 
the edge polytope of $G$ is Gorenstein.

\end{itemize}

\end{thm}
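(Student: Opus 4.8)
The plan is to prove the four equivalences by combining the three ingredients flagged after the statement: the integer decomposition property of $\PMSG$ (Proposition~\ref{IDP}), a facet-level Gorenstein criterion (Theorem~\ref{2Gor}), and a comparison with the edge polytope (Corollary~\ref{pmsedgeGor}). For $(\mathrm i)\Leftrightarrow(\mathrm{ii})$ it is enough to know that $\PMSG$ has IDP whenever $G$ is bipartite. I would prove this by realising $\PMSG$ as the image $A(M(G))$ of the matching polytope $M(G)\subset\RR^E$ under the vertex--edge incidence map $A$, so that $A\chi^{M}=\rho(V(M))$ and $\{V(M):M\text{ a matching}\}=\mathscr W(G)$: for $z\in k\PMSG\cap\ZZ^V$ the bounded polyhedron $\{y\ge 0:Ay=z\}$ is cut out by a totally unimodular system, hence has an integral vertex $\bar y$; since $(A\bar y)_v=z_v\le k$, the multigraph $\bar y$ has maximum degree $\le k$, so by K\"onig's edge-colouring theorem it is a sum of $k$ matchings, and applying $A$ writes $z$ as a sum of $k$ vectors $\rho(S)$. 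Given IDP, the Ehrhart ring of $\PMSG$ equals $K[\PMSG]$, and since a lattice polytope is Gorenstein if and only if its Ehrhart ring is (as recalled in the introduction), $(\mathrm i)$ and $(\mathrm{ii})$ are equivalent.

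The implication $(\mathrm{iii})\Rightarrow(\mathrm{ii})$ is immediate. For $(\mathrm{iv})\Rightarrow(\mathrm{iii})$ I would use the Balas--Pulleyblank description (see \cite{PMS}): the affine hull of $\PMSG$ is $H=\{x\in\RR^V:\sum_{v\in V_1}x_v=\sum_{v\in V_2}x_v\}$, and its facets are supported by $x_v\ge 0$ and $x_v\le 1$ for $v\in V$, together with $\sum_{v\in S}x_v\le\sum_{v\in\Gamma(S)}x_v$ for precisely those $\emptyset\ne S\subsetneq V_1$ for which $G[S\cup\Gamma(S)]$ and $G[(V_1\setminus S)\cup(V_2\setminus\Gamma(S))]$ are both connected. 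Under $(\mathrm{iv})$, $G$ has a perfect matching, so $|V_1|=|V_2|$ and $\tfrac12\rho(V)\in H$; moreover $|\Gamma(S)|=|S|+1>|S|$ on every facet-defining $S$, so $\tfrac12\rho(V)$ satisfies all facet inequalities strictly and $\rho(V)\in 2\PMSG$ lies in the relative interior. Translating $2\PMSG$ by $-\rho(V)$ turns each box inequality into $\pm x_v\le 1$ and each subset inequality into $\sum_{v\in\Gamma(S)}x_v-\sum_{v\in S}x_v\le|\Gamma(S)|-|S|=1$; all these functionals take integer values on $H\cap\ZZ^V$, so $2\PMSG-\rho(V)$ is reflexive and $\PMSG$ is Gorenstein of index $2$.

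The crux is $(\mathrm{ii})\Rightarrow(\mathrm{iv})$. Since $\PMSG\subseteq[0,1]^V$ is a $(0,1)$-polytope, all its lattice points are vertices, so its relative interior contains no lattice point and its Gorenstein index $\delta$ is $\ge 2$; let $\alpha$ be the lattice point with $\delta\PMSG-\alpha$ reflexive, so that $\langle a_F,\alpha\rangle=\delta b_F-1$ for every facet written in primitive inner-normal form $\langle a_F,x\rangle\le b_F$. Here the hypothesis enters: the non-cut vertex $v_0$ of degree $\ge 2$ has no pendant neighbour (a pendant neighbour would make $v_0$ a cut vertex), which forces both $x_{v_0}\ge 0$ and $x_{v_0}\le 1$ to be facets, hence $\delta-1=1$, i.e.\ $\delta=2$. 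A pendant vertex $w$, with neighbour $u$, can now be excluded: $x_w\ge 0$ gives $\alpha_w=1$ and $x_w\le x_u$ gives $\alpha_u=\alpha_w+1=2$, but $x_u\le 1$ gives $\alpha_u=\delta-1=1$, a contradiction. With no pendant vertices $x_v\le 1$ is a facet for every $v$, so $\alpha_v=\delta-1=1$ and $\alpha=\rho(V)$; then $\rho(V)\in H$ forces $|V_1|=|V_2|$, and since $\tfrac12\rho(V)\in\PMSG$ satisfies every valid inequality $\sum_{v\in S}x_v\le\sum_{v\in\Gamma(S)}x_v$, Hall's condition holds and $G$ has a perfect matching. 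Finally, applying $\langle a_F,\alpha\rangle=2b_F-1$ to the subset facet of a connected-split $S$ (where $b_F=0$) yields $|\Gamma(S)|-|S|=1$, which is $(\mathrm{iv})$.

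For the last equivalence, when $G$ is $2$-connected I would invoke Corollary~\ref{pmsedgeGor}: $2$-connectedness makes the connectivity conditions that single out the facet-defining subsets of $\PMSG$ correspond to those for the edge polytope of $G$, so the two polytopes are Gorenstein together. I expect the main obstacle to be $(\mathrm{ii})\Rightarrow(\mathrm{iv})$, and within it the facet bookkeeping --- deciding precisely which box and incidence inequalities are genuine facets in the presence of pendant and cut vertices, and verifying primitivity of the facet functionals inside the lattice $H\cap\ZZ^V$ --- on which both the deduction $\delta=2$ and the identification $\alpha=\rho(V)$ rest.
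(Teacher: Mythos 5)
Your overall architecture matches the paper's: IDP gives (i)$\Leftrightarrow$(ii), the Balas--Pulleyblank facet description plus an interior lattice point of $2\PMSG$ gives (ii)$\Leftrightarrow$(iii)$\Leftrightarrow$(iv), and Corollary~\ref{pmsedgeGor} handles (v). Your IDP argument is a legitimate alternative to the paper's (which cites White's theorem after identifying $\PMSG$ with a transversal matroid base polytope): realising $\PMSG$ as the image of the matching polytope, taking an integral point of the totally unimodular system $\{y\ge 0: A_Gy=z\}$, and splitting the resulting multigraph into $k$ matchings by K\"onig's edge-colouring theorem is correct and arguably more self-contained.

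There is, however, a genuine flaw in your (ii)$\Rightarrow$(iv) step, precisely at the pendant-vertex exclusion. You write that for a pendant $w$ with neighbour $u$, ``$x_w\le x_u$ gives $\alpha_u=\alpha_w+1$.'' The Gorenstein relation $\langle a_F,\alpha\rangle=\delta b_F-1$ is available only for \emph{facets}, and by Proposition~\ref{bipartitefacet}(c) the inequality $x(S)-x(\Gamma(S))\le 0$ with $S=\{w\}$ is facet-inducing only when $G[(V_1\setminus\{w\})\cup(V_2\setminus\{u\})]$ is connected. Since $u$ is necessarily a cut vertex (it has a pendant neighbour and $G$ has more than two vertices), this connectivity can fail --- for instance whenever $u$ has two pendant neighbours --- and then your contradiction does not materialise. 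The step is also unnecessary: you do not need to rule out pendant vertices to identify $\alpha$. The inequalities $0\le x_v\le 1$ are \emph{valid} for $\PMSG$ for every $v$ and each defines a proper face (every vertex of a connected graph with at least two edges lies in some perfectly matchable set and avoids some other), so any lattice point $\alpha$ in the relative interior of $2\PMSG$ satisfies $0<\alpha_v<2$, hence $\alpha=\rho(V)$, facet or no facet. This is exactly how the paper proceeds (Propositions~\ref{bipGoren} and \ref{sagaichi}): the hypothesis on the non-cut vertex of degree $\ge 2$ is used only to force $\delta=2$ --- which part of your argument is correct --- and then interiority alone pins down $\alpha$, after which conditions (i) and (iv) of Proposition~\ref{bipGoren} yield the perfect matching (via Hall) and the relation $|\Gamma(S)|=|S|+1$ on facet-defining $S$. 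Replacing your pendant argument by this interiority observation repairs the proof.
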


The third main result of the present paper is a complete characterization of Gorenstein 
perfectly matchable subgraph polytopes of pseudotrees.
A connected graph which has at most one cycle is called a \textit{pseudotree}. 
A \textit{regular graph} is a graph whose vertices have the same degree.
A \textit{bidegreed graph} is a graph with two different vertex degrees.
For example, a path $P_n$ and a star graph $K_{1,n-1}$ are bidegreed if $n \ge 3$.

\begin{thm}
\label{pseudotreeGorensteinODD}
Let $G$ be a pseudotree on the vertex set $V$.
Then $K[\PMSG]$ is Gorenstein
if and only if 
$G$ satisfies one of the following{\rm :}
\begin{itemize}
    \item[{\rm (i)}] $G$ is $K_1$, $K_2$, or a bidegreed tree{\rm ;}
    \item[{\rm (ii)}] $G=C_5${\rm ;}
    \item[{\rm (iii)}] $G$ has a triangle $C$ and
$$\begin{cases}
\deg(v) \in \{2,3\} & \mbox{if }v \in V(C)\\
\deg(v) \in \{1,3\} & \mbox{otherwise{\rm ;}}
\end{cases}    $$
    \item[{\rm (iv)}] $G$ has an even cycle $C$, and there exists an integer $\delta \ge 2$ such that
$$\begin{cases}
\deg (v) = \delta & \mbox{if }v \in V(C)\\
\deg (v) \in \{1,\delta-1\} & \mbox{otherwise.}
\end{cases} $$   
 %
\end{itemize}
\end{thm}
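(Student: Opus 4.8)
The plan is to reduce Gorensteinness of $K[\PMSG]$ to a reflexivity condition on $\PMSG$, and then to treat separately the three structural types of pseudotree: a tree, a graph whose unique cycle is even, and a graph whose unique cycle is odd. First I would record that $\PMSG$ has the integer decomposition property for every pseudotree $G$. If $G$ is a tree, or if its unique cycle is $C_3$ or $C_4$, then all blocks of $G$ are complete bipartite except possibly one block equal to $K_{1,1,1}=C_3$, so $\PMSG$ is compressed by Theorem~\ref{compressed} and in particular has IDP; when the cycle is even one invokes Proposition~\ref{IDP}, and the single remaining family, the cycle being $C_5$, is checked directly (its polytope has only eleven lattice points). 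Granting IDP, $K[\PMSG]$ is the Ehrhart ring of $\PMSG$, so $K[\PMSG]$ is Gorenstein if and only if there are a positive integer $\delta$ and a lattice point $c$ in the relative interior of $\delta\PMSG$ with $\langle a_F,c\rangle=\delta b_F-1$ for each facet-defining inequality $\langle a_F,x\rangle\le b_F$ of $\PMSG$, with $a_F$ primitive in the affine lattice of $\PMSG$. So everything reduces to the facet description of $\PMSG$, which for pseudotrees is short: besides $x_v\ge 0$ and the inequalities $x_v\le 1$ (facet-defining only for certain $v$, and never when the cycle is odd of length $\ge 5$, as a rank computation shows), one has the deficiency inequalities of Balas--Pulleyblank coming from the cut structure of $G$.

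For the direction ``(i)--(iv) $\Rightarrow$ Gorenstein'' I would exhibit the pair $(\delta,c)$ and check the facet equalities. For a bidegreed tree with vertex degrees $1$ and $d$ the natural choice is $\delta=d+1$; for the bipartite even-cycle graphs of (iv) the index is forced to $2$ (because $x_v\ge 0$ and $x_v\le 1$ are both facets there), and when some cycle vertex carries no pendant tree the assertion is exactly the equivalence of conditions (iii) and (iv) of Theorem~\ref{secondmain}. For $C_5$ one checks its eleven-point polytope by hand, and for the triangle graphs of (iii) one exhibits $c$ and the appropriate index ($\delta=3$ already for $G=C_3$), using that $\PMSG$ is compressed so that $h^*(\PMSG;x)$ is the $h$-polynomial of a pulling triangulation and palindromicity can be read off.

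The converse is where the work lies. Assuming $K[\PMSG]$ Gorenstein, hence $\PMSG$ Gorenstein of some index $\delta$ with relative-interior lattice point $c$, I would extract rigid numerical constraints from the facet equalities: a vertex $v$ with both $x_v\ge 0$ and $x_v\le 1$ facet-defining forces $\delta=2$ and $c_v=1$, which propagates; if $G$ is a tree, the equalities at the box facets and at the deficiency facets near leaves force every non-leaf vertex to have a common degree, i.e.\ $G$ is bidegreed; if the cycle is even they force the degree pattern of (iv); and if the cycle is odd of length $\ge 5$ the system is inconsistent unless $G=C_5$ with no pendant vertices (adjoining any pendant to $C_5$, or passing to $C_7$, breaks the symmetry of $\PMSG$, e.g.\ palindromicity of $h^*$). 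Combining these with Theorem~\ref{compressed} (which says $\PMSG$ is not compressed once the cycle is $C_5,C_6,C_7,\dots$) yields the classification.

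I expect two main obstacles. First, securing IDP, equivalently normality, of $\PMSG$ in the non-compressed cases, namely even cycles of length $\ge 6$ and $C_5$: without it one cannot pass to the Ehrhart ring and must argue about $K[\PMSG]$ directly. Second, the converse direction must rule out many near-miss graphs --- trees with three or more distinct degrees, $C_7$, $C_5$ with a pendant edge, cycles-with-pendants of non-uniform degree --- and each exclusion needs a concrete certificate that the linear system $\langle a_F,c\rangle=\delta b_F-1$ has no solution; carrying this out uniformly rather than case by case is the delicate part. A related subtlety is the even-cycle case in which every cycle vertex bears a pendant tree, so that every vertex of degree $\ge 2$ is a cut vertex and Theorem~\ref{secondmain} does not apply; this subfamily needs its own facet computation returning to condition (iv).
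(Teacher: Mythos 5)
Your plan for the even-cycle case is essentially the paper's: pass to the Balas--Pulleyblank facet description (Proposition~\ref{bipGoren}) and locate the interior lattice point $\alpha$ with $\alpha(v)=\delta-1$ on cycle and internal tree vertices and $\alpha(v)=1$ on pendants; the subfamily where every cycle vertex is a cut vertex, which you correctly flag as outside Theorem~\ref{secondmain}, is handled in the paper by a direct induction over the rooted subtrees hanging off the cycle, and you would need the same. The divergence, and the genuine gap, is in the non-bipartite half and the trees. You propose to run the same Ehrhart-theoretic facet analysis there, which requires $K[\PMSG]$ to coincide with the Ehrhart ring, i.e.\ IDP. Compressedness covers trees and triangle pseudotrees, and your eleven-point check covers the bare $C_5$; but in the converse direction you must also \emph{exclude} $C_7$, $C_5$ with pendant trees, and all pseudotrees whose odd cycle has length $\ge 5$, and for these $\PMSG$ is neither compressed nor bipartite, so you have no route to IDP and hence no licence to argue at the level of the polytope. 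You name this obstacle yourself but offer nothing to close it; likewise "concrete certificates" for each near-miss graph, and the non-bipartite analogue of Proposition~\ref{bipGoren} needed for the triangle graphs (where the facets carry the $\theta(S)$ correction of Proposition~\ref{nonbipartitefacet}), are left entirely unexecuted, and carrying them out uniformly is exactly where the difficulty sits.

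The paper closes all of this with one structural move absent from your proposal. For a pseudotree with no even cycle the vertex--edge incidence matrix $A_G$ has full column rank, giving an isomorphism $K[\PMSG]\cong K[\stab(L(G))]$ (Proposition~\ref{doukei}). Normality of $\stab(L(G))$ then follows because $L(G)$ is a clique-sum of an odd cycle and cliques and normality of stable set polytopes is preserved under clique-sums (Proposition~\ref{pseudotree normal}); and since $L(G)$ is $h$-perfect (a pseudotree contains no odd subdivision of $C_5+e$), Miyazaki's theorem on Gorenstein Ehrhart rings of stable set polytopes of $h$-perfect graphs applies. Because the maximal cliques of $L(G)$ correspond to the stars and triangles of $G$, the condition that all maximal cliques have a common cardinality $\omega$, together with the odd-hole restrictions, translates directly into the trichotomy $K_1,K_2$ ($\omega=1$); paths and $C_5$ ($\omega=2$); bidegreed trees and the triangle condition ($\omega\ge 3$). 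This single argument supplies both the missing normality and the missing uniform exclusion of the near-miss graphs; without it, or some substitute for it, your outline does not go through for trees with three distinct degrees, $C_5$ with pendants, odd cycles of length $\ge 7$, or irregular triangle graphs.
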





The relationships between toric rings of perfectly matchable subgraph polytopes and 
toric rings of other polytopes play important roles in this paper.
In \cite{PMS}, it was pointed out that 
$\PMSG$ is unimodulary equivalent to a base polytope of a transversal matroid if $G$ is bipartite. 
Let $G$ be a graph with the edge set $E =\{e_1,\dots,e_m\}$,
and let
$$
A_G= (\rho(e_1) , \dots, \rho(e_m))
$$
be the matrix associated with the edge polytope 
${\rm Ed}(G)$ of $G$.
It then follows that 
\begin{equation}
\PMSG \cap \ZZ^n =\{ A_G \ x : x \in {\rm Stab}(L(G)) \cap \ZZ^m \},\label{PMSstable}
\end{equation}
where ${\rm Stab}(L(G))$ is the stable set polytope of the line graph of $G$
(definitions are explained later).
From (\ref{PMSstable}), it is easy to see that
the edge polytope of $G$ is normal if $\PMSG$ is normal
(Corollary \ref{normaltonormal}).
There are many research 
on the edge polytopes and the stable set polytopes
from the point of view of not only discrete geometry but also combinatorial commutative algebra.
The study of perfectly matchable subgraph polytopes is expected to contribute the study of these polytopes.

The present paper is organized as follows.
In Section $2$, we introduce relationships between $\PMSG$ and other polytopes.
In Section $3$, in order to prove the main theorems, we examine inequalities which are facet-inducing for $\PMSG$. 
In Section $4$, we give a proof for Theorem~\ref{compressed}.
Finally, in Section $5$, we give a proof for Theorems
\ref{secondmain} and
\ref{pseudotreeGorensteinODD}.\\[10pt]

\section{Relationships with toric rings of other polytopes}

In this section, we introduce relationships between toric rings of
$\PMSG$ and toric rings  of other polytopes, i.e.,
base polytopes of matroids, stable set polytopes, and edge polytopes.

Let $G=(V,E)$ be a graph.
Recall that the perfectly matchable subgraph polytope $\PMSG$ of $G$ is the convex hull of 
$\{ \rho(S) \in \RR^n :  S \in\mathscr{W}(G) \}$,
where $\mathscr{W}(G)$ is the set of all subsets of $V$ which induce perfectly matchable subsets of $G$.

\begin{example}
Let $G$ be a cycle $C_4$ of length 4.
Then the set $\mathscr{W}(G)$ associated with $G$ is 
$$\mathscr{W}(G) = \{\emptyset, \{1,2\},\{2,3\},\{3,4\},\{1,4\},\{1,2,3,4\}\}.$$
Note that matchings $\{\{1,2\},\{3,4\}\}$ and $\{\{1,4\},\{2,3\}\}$ are associated with
the same set $\{1,2,3,4\}$.
The perfectly matchable subgraph polytope $\PMSG \subset \RR^4$ of $G$ 
is the convex hull of the column vectors of the matrix
$$\begin{pmatrix}
0&1&0&0&1&1 \\
0&1&1&0&0&1 \\
0&0&1&1&0&1 \\
0&0&0&1&1&1 
\end{pmatrix}.$$
Then $\PMSG$ is a 3-dimensional polytope which is compressed and Gorenstein.
\end{example}

It is known \cite{PMS, PMS2} that, if $G$ is a connected graph on the vertex set $\{1,2,\dots,n\}$, then
$$
\dim \PMSG = 
\left\{
\begin{array}{cc}
n-1   & \mbox{if } G \mbox{ is bipartite,} \\
n & \mbox{otherwise.}
\end{array}
\right.
$$
If $G=(V, E)$ is a bipartite graph on the vertex set $V=\{1,2,\dots,n\}=V_1\cup V_2$, then $\PMSG$ is contained in the hyperplane
$$\{ x \in \RR^n :  x(V_1) = x(V_2) \},$$
where $x(V_i)$ denotes the sum $\sum_{j \in V_i} x_j$ for the vector $x = (x_1,\dots,x_n)$. 
If $G$ is the disjoint union of graphs $G_1$ and $G_2$,
then $\PMSG$ is the product of ${\mathcal P}_{G_1}$ and ${\mathcal P}_{G_2}$ 
and hence $\dim (\PMSG) = \dim ({\mathcal P}_{G_1}) + \dim ({\mathcal P}_{G_2})$.
If $G$ has $k$ connected components which are bipartite, then we have
$
\dim \PMSG = n- k.
$

\subsection{Base polytopes of transversal matroids}

Let $M$ be a matroid on a ground set $\{1,2,\dots,n\}$
 with the set of bases $\mathcal{B}$.
The {\it base polytope} $B(M)$ of $M$ is the convex hull of the set $\{ \rho(B) \ :\  B \in \mathcal{B} \}\subset \RR^n$.
In \cite{PMS}, it was pointed out that 
$\PMSG$ is unimodularly equivalent to
a base polytope of a transversal matroid if $G$ is bipartite. 
Since the base polytope of any matroid has IDP
\cite{White},
we have the following.

\begin{prop}
\label{bipartitenormal}
\label{IDP}
Let $G$ be a connected bipartite graph.
Then $\PMSG$ has IDP.    
In particular, $K[\PMSG]$ is Gorenstein if and only if $\PMSG$ is Gorenstein.
  \end{prop}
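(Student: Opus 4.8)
The plan is to deduce both assertions from facts already recalled above, so the argument is short; the only care needed is in transporting the integer decomposition property across a unimodular equivalence and in handling the fact that $\PMSG$ is not full-dimensional.

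First I would invoke the observation of \cite{PMS}: since $G$ is bipartite, $\PMSG$ is unimodularly equivalent to the base polytope $B(M)$ of a transversal matroid $M$ associated to $G$ --- this is exactly where bipartiteness enters. Next, by \cite{White} the base polytope $B(M)$ of any matroid has IDP. It then remains to observe that IDP is invariant under unimodular equivalence: if $\varphi \colon \mathrm{aff}(B(M)) \to \mathrm{aff}(\PMSG)$ is the affine isomorphism realizing the equivalence, then (after translating both affine spans so that they pass through lattice points) $\varphi$ is additive, commutes with dilation by a positive integer $k$, and carries $\ZZ^m \cap \mathrm{aff}(B(M))$ bijectively onto $\ZZ^n \cap \mathrm{aff}(\PMSG)$. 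Hence a lattice point of $k\,\PMSG$ is the $\varphi$-image of a lattice point of $k\,B(M)$, which by IDP of $B(M)$ is a sum of $k$ lattice points of $B(M)$; applying $\varphi$ expresses the original point as a sum of $k$ lattice points of $\PMSG$. Therefore $\PMSG$ has IDP.

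For the ``in particular'' statement I would use the two facts recalled in the introduction: a lattice polytope $\Pc$ has IDP precisely when its Ehrhart ring coincides with its toric ring $K[\Pc]$, and $\Pc$ is Gorenstein if and only if its Ehrhart ring is Gorenstein. Since $\PMSG$ has IDP by the first part, $K[\PMSG]$ equals the Ehrhart ring of $\PMSG$; consequently $K[\PMSG]$ is Gorenstein if and only if the Ehrhart ring of $\PMSG$ is Gorenstein, which in turn is equivalent to $\PMSG$ being Gorenstein.

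I do not expect a genuine obstacle here. The one point requiring a little bookkeeping --- and it is minor --- is that $\PMSG$ is contained in the hyperplane $\{x(V_1)=x(V_2)\}$ and so is not full-dimensional: the Ehrhart ring and the Gorenstein property must be interpreted inside the affine lattice $L_{\PMSG}$ (equivalently, after passing to a full-dimensional unimodular model, which is legitimate since Gorensteinness and IDP are unimodular invariants). With that convention, both cited facts apply verbatim and the proof goes through.
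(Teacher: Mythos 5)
Your argument is correct and is essentially the paper's own: the paper justifies this proposition in the preceding paragraph by the same two facts, namely the unimodular equivalence of $\PMSG$ with a base polytope of a transversal matroid from \cite{PMS} and White's theorem that matroid base polytopes have IDP, with the ``in particular'' clause following from the Ehrhart-ring characterizations recalled in the introduction. Your extra care about transporting IDP across the unimodular equivalence and about the non-full-dimensionality of $\PMSG$ is sound but is left implicit in the paper.
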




Note that 
Gorenstein base polytopes of matroids were studied in \cite{GMP}.

\subsection{Edge polytopes and stable set polytopes}
  
Let $G$ be a graph on the vertex set $V =\{1,2,\dots,n\}$ and the edge set $E =\{e_1,\dots,e_m\}$.
The \textit{edge polytope} ${\rm Ed}(G)$ of $G$ is the convex hull of the set
$$\{ \rho(e_1),\dots, \rho(e_m)\}.$$
Note that 
compressed (resp. Gorenstein) edge polytopes were studied in \cite{Ocomedge, OHcompressed01} (resp. \cite{OH}).
We say that a graph $G$ satisfies the \textit{odd cycle condition}
if, for any two odd cycles $C_1$ and $C_2$ in 
the same connected component of $G$ without common vertices, 
there exists an edge $\{i,j\}$ of $G$
such that $i \in V(C_1)$ and $j \in V(C_2)$.

\begin{prop}[\cite{OHnormal, SVV}]
\label{epnormal}
Let $G$ be a graph.
Then ${\rm Ed}(G)$ is normal if and only if $G$ satisfies the odd cycle condition.
  \end{prop}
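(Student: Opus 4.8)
The plan is to prove the two implications separately. A routine reduction lets us assume $G$ is connected, since both the odd cycle condition and the normality of ${\rm Ed}(G)$ are inherited between $G$ and its connected components. If $G$ is bipartite the statement is immediate: the odd cycle condition holds vacuously, while the incidence matrix $A_G$ of a bipartite graph is totally unimodular, so every pulling triangulation of ${\rm Ed}(G)$ is unimodular, ${\rm Ed}(G)$ is compressed, and in particular normal. So the content is the non-bipartite case, which I would split into: (a) if $G$ violates the odd cycle condition, then ${\rm Ed}(G)$ is not normal; (b) if a connected non-bipartite $G$ satisfies the odd cycle condition, then ${\rm Ed}(G)$ is normal.

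For (a) I would exhibit an explicit lattice point of a dilate of ${\rm Ed}(G)$ that does not decompose. Choose vertex-disjoint odd cycles $C_1, C_2$ in one component of $G$ with no edge of $G$ between them, write $|E(C_i)| = 2p_i+1$, put $k = p_1+p_2+1$, and set $\alpha = \rho(V(C_1)) + \rho(V(C_2))$. First, $\alpha \in k\,{\rm Ed}(G)$: each vertex of $C_i$ lies on exactly two edges of $C_i$, so $\sum_{e \in E(C_i)}\rho(e) = 2\rho(V(C_i))$, which exhibits $\rho(V(C_i))$ as $\tfrac{2p_i+1}{2}$ times a convex combination of the $\rho(e)$, i.e. $\rho(V(C_i)) \in \tfrac{2p_i+1}{2}\,{\rm Ed}(G)$, and adding gives $\alpha \in k\,{\rm Ed}(G)$. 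Second, $\alpha$ lies in the lattice $L$ spanned by $\{\rho(e) : e \in E(G)\}$: deleting a vertex $u_i$ from $C_i$ leaves a path on $2p_i$ vertices of $G$, which has a perfect matching, so $\rho(V(C_i)\setminus\{u_i\}) \in L$; the alternating sum of the edge vectors around an odd cycle equals $2\eb_v$ for a vertex $v$ of that cycle, so $2\eb_v \in L$ for every $v \in V(C_1)\cup V(C_2)$; and taking a path $P$ in $G$ from $u_1 \in V(C_1)$ to $u_2 \in V(C_2)$ (available since $C_1,C_2$ lie in the same component), the alternating sum of edge vectors along $P$ equals $\eb_{u_1}+\eb_{u_2}$ or $\eb_{u_1}-\eb_{u_2}$, so in either case $\eb_{u_1}+\eb_{u_2} \in L$ (using $2\eb_{u_2}\in L$); adding up, $\alpha = \rho(V(C_1)\setminus\{u_1\}) + \rho(V(C_2)\setminus\{u_2\}) + (\eb_{u_1}+\eb_{u_2}) \in L$. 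Third, $\alpha$ is not a sum of $k$ vectors $\rho(e)$ (the only lattice points of ${\rm Ed}(G)$), because such a sum would be the degree sequence of a perfect matching of $G[V(C_1)\cup V(C_2)]$, which is impossible: there is no edge between $V(C_1)$ and $V(C_2)$ and both have odd order. Hence ${\rm Ed}(G)$ fails the normality criterion.

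For (b) I would argue by induction on $\sum_i \alpha_i$ that every $\alpha \in k\,{\rm Ed}(G)\cap L$ (necessarily with $\sum_i\alpha_i = 2k$) is a sum of $k$ vectors $\rho(e)$ — equivalently, that $\{\rho(e) : e \in E(G)\}$ is a Hilbert basis of the cone $\RR_{\ge 0}\{\rho(e) : e \in E(G)\}$. Given such an $\alpha$, choose a rational representation $\alpha = \sum_e \lambda_e\rho(e)$ with $\lambda_e \ge 0$, let $H$ be the subgraph formed by the edges with $\lambda_e > 0$, and try to peel off a single edge vector $\rho(e_0)$ so that $\alpha - \rho(e_0)$ still lies in $(k-1)\,{\rm Ed}(G)\cap L$, after which the induction hypothesis finishes. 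The main obstacle is precisely this peeling step: the naive choice of $e_0$ can be blocked by a parity obstruction — an odd component of $H$, i.e. locally the configuration of two odd cycles not joined by an edge that appeared in (a) — and one must then invoke the odd cycle condition to route through an edge joining two odd cycles of $G$ and thereby restore integrality. Turning this into a clean local modification, and verifying that under the odd cycle condition no such obstruction is fatal, is the technical heart of the proof and the part I expect to be hardest; this is exactly the content established in the cited work of Ohsugi--Hibi and Simis--Vasconcelos--Villarreal.
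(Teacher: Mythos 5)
The paper does not actually prove Proposition~\ref{epnormal}: it is quoted verbatim from the cited works of Ohsugi--Hibi \cite{OHnormal} and Simis--Vasconcelos--Villarreal \cite{SVV}. (The only related argument in the source is a discarded proof showing that $\PMSG$ fails to be normal when the odd cycle condition fails, which constructs $\alpha=\rho(V(C))+\rho(V(C'))$, verifies $\alpha\in t\PMSG\cap L_{\PMSG}$ via an alternating sum along an odd walk joining the two cycles, and rules out a decomposition by a parity count on $|V(C)|$.) Your part (a) is correct and complete, and it is essentially that same argument transplanted to ${\rm Ed}(G)$: the membership $\alpha\in k\,{\rm Ed}(G)$, the lattice membership via the perfect matchings of $C_i\setminus\{u_i\}$ together with $2\eb_v\in L$ and the alternating sum along a connecting path, and the non-decomposability via the absence of a perfect matching of $G[V(C_1)\cup V(C_2)]$ are all sound. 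The bipartite reduction via total unimodularity is also fine.

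The genuine gap is direction (b), the implication that the odd cycle condition forces normality, which is half of the stated equivalence. Your peeling strategy is the right shape, but the step you yourself flag --- showing that whenever $\alpha\in k\,{\rm Ed}(G)\cap L$ cannot be reduced by subtracting a single $\rho(e_0)$, the obstruction is a pair of vertex-disjoint odd cycles in the support, and that the odd cycle condition then supplies an edge allowing a local rerouting that restores integrality --- is not carried out, and it is precisely where all the work lies (one must control the fractional support $H$, show the obstruction is exactly of this form, and verify the exchange keeps the point in $(k-1)\,{\rm Ed}(G)$). As written, the proposal proves only ``normal $\Rightarrow$ odd cycle condition'' and defers ``odd cycle condition $\Rightarrow$ normal'' to the literature. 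That is defensible in context, since the paper itself treats the whole proposition as a citation, but it means your text is not a self-contained proof of the stated equivalence; if the intent is to cite \cite{OHnormal,SVV} for the hard direction, you should say so explicitly rather than presenting an induction whose key step is left open.
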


A finite subset $S\subset V$ is called \textit{stable} in $G$ if none of the edges of $G$ is a subset of $S$. 
In particular, the empty set $\emptyset$ is stable. 
Let $S(G) = \{S_1,\dots,S_t\}$ denote the set of all stable sets of $G$.
The \textit{stable set polytope} of $G$ is the convex hull of the set $\{\rho(S_1),\dots,\rho(S_t)\}$,
denoted by $\stab(G)$.
It is known (e.g., \cite{OHcompressed02}) that $\stab(G)$ is compressed if and only if $G$ is perfect.
Moreover, it is known \cite[Theorem 1.2 (b)]{OH} that, for any perfect graph $G$, $\stab(G)$ is Gorenstein if and only if
all maximal cliques of $G$ have the same cardinality.
The {\it line graph} $L(G)$ of $G=(V,E)$ is a graph on the vertex set $E$
with the edge set $\{ \{e,e'\} : e,e' \in E , e \cap e' \neq \emptyset\} $.

For $S\in\mathscr{W}(G)$, we have
$\rho(S)=\rho(e_{i_1})+\dots+\rho(e_{i_k}) \in \RR^n,$
where $\{e_{i_1},\dots, e_{i_k}\} \subset E$
is a matching of $G$.
Note that
$\{e_{i_1},\dots, e_{i_k}\} \subset E$
is a matching of $G$ if and only if
$S'=\{e_{i_1},\dots, e_{i_k}\}\subset V(L(G))$ is stable in $L(G)$. 
Since $\rho(S')= \eb_{i_1}+\dots+\eb_{i_k} \in \RR^m$, we have 
$$\rho(S)=A_G \ \rho(S'),$$
where
$
A_G= (\rho(e_1) , \dots, \rho(e_m))
$
is the vertex-edge incidence matrix of $G$.
Thus
%
\begin{equation}
\PMSG \cap \ZZ^n =\{ A_G \ x : x \in {\rm Stab}(L(G)) \cap \ZZ^m \}.\label{latticePS} 
\end{equation}
In addition, we have ${\rm Ed}(G) \subset \PMSG$.
In such a case, the following holds in general.

\begin{prop}
\label{normalfact}
Let $\Pc \subset \Pc' \subset \RR^n$ be lattice polytopes
such that
\begin{eqnarray*}
    \Pc \cap \ZZ^n &=& \{\ab_1, \ab_2, \dots, \ab_m\},\\
\Pc' \cap \ZZ^n  &=&\{ (\ab_1, \ab_2, \dots, \ab_m) \ x : x \in Q \},
\end{eqnarray*}
where $Q \subset \ZZ_{\ge 0}^m$.
Suppose that there exists $\wb \in \RR^n$ such that $\ab_i \cdot \wb = 1$
for any $1 \le i \le m$.
Then
$\Pc$ is normal if $\Pc'$ is normal.
\end{prop}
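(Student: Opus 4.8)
The plan is to deduce normality of $\Pc$ from that of $\Pc'$ by lifting an arbitrary lattice point of $k\Pc$ to a decomposition that lives in $\Pc'$, and then pushing that decomposition back down to $\Pc$, with the functional $\wb$ doing the essential bookkeeping on the number of summands.

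First I would fix an integer $k \ge 1$ and a vector $\bb \in k\Pc \cap L_\Pc$, and aim to write $\bb$ as a sum of exactly $k$ elements of $\Pc \cap \ZZ^n = \{\ab_1, \dots, \ab_m\}$. Since $\Pc \subseteq \Pc'$, we have $\Pc \cap \ZZ^n \subseteq \Pc' \cap \ZZ^n$, hence $L_\Pc \subseteq L_{\Pc'}$, and also $k\Pc \subseteq k\Pc'$; therefore $\bb \in k\Pc' \cap L_{\Pc'}$. Applying normality of $\Pc'$, we may write $\bb = \bb_1 + \dots + \bb_k$ with each $\bb_j \in \Pc' \cap \ZZ^n$. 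By the hypothesis on $\Pc'$, for each $j$ there is a vector $x^{(j)} = (x^{(j)}_1, \dots, x^{(j)}_m) \in Q \subseteq \ZZ_{\ge 0}^m$ with $\bb_j = \sum_{i=1}^m x^{(j)}_i \ab_i$.

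Next I would evaluate $\wb$ on both sides. On one hand, $\bb \in k\Pc$ means $\bb = k\sum_{i=1}^m \mu_i \ab_i$ for some $\mu_i \ge 0$ with $\sum_i \mu_i = 1$, so $\bb \cdot \wb = k\sum_i \mu_i (\ab_i \cdot \wb) = k$, using $\ab_i \cdot \wb = 1$ for all $i$. On the other hand, $\bb \cdot \wb = \sum_{j=1}^k \bb_j \cdot \wb = \sum_{j=1}^k \sum_{i=1}^m x^{(j)}_i (\ab_i \cdot \wb) = \sum_{j=1}^k \sum_{i=1}^m x^{(j)}_i$. Comparing the two gives $\sum_{j,i} x^{(j)}_i = k$. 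Hence the identity $\bb = \sum_{j=1}^k \sum_{i=1}^m x^{(j)}_i \ab_i$ exhibits $\bb$ as a sum of exactly $k$ (not necessarily distinct) vectors from $\Pc \cap \ZZ^n$, which is precisely what normality of $\Pc$ demands.

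The only delicate point --- and the reason the hypothesis about $\wb$ is there at all --- lies in this last counting step: the decomposition coming from $\Pc'$ a priori tells us only that $\bb$ is \emph{some} nonnegative integer combination of the $\ab_i$, with no control on the total multiplicity $\sum_{j,i} x^{(j)}_i$, whereas normality of $\Pc$ requires that this multiplicity be exactly $k$. The functional $\wb$ pins it down, since it assigns weight $1$ to each generator $\ab_i$ and weight $k$ to every point of $k\Pc$. Everything else --- the inclusion $L_\Pc \subseteq L_{\Pc'}$ and the convex-combination computation of $\bb \cdot \wb$ --- is routine.
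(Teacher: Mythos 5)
Your proof is correct and follows essentially the same route as the paper's: pass to a normality decomposition of the point in $\Pc'$, expand each summand as a nonnegative integer combination of the $\ab_i$ via the hypothesis on $\Pc' \cap \ZZ^n$, and use the functional $\wb$ to force the total number of $\ab_i$'s appearing to be exactly $k$. The only cosmetic difference is that you make the identity $\bb \cdot \wb = k$ explicit through the convex-combination representation of points of $k\Pc$, which the paper leaves implicit.
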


\begin{proof}
Let $\alpha \in k \Pc \cap L_\Pc$.
Since $\Pc \subset \Pc'$, $\alpha$ belongs to $k \Pc' \cap L_{\Pc'}$.
The normality of $\Pc'$ guarantees that 
$\alpha = \alpha_1 + \dots + \alpha_k$ where $\alpha_i \in \Pc' \cap \ZZ^n$
for each $i$.
Then  $\alpha_i = (\ab_1, \ab_2, \dots, \ab_m) x$ 
for some nonnegative integer vector $x \in Q$.
Hence each $\alpha_i$ is a sum of vectors from $\Pc \cap \ZZ^n$
if $x$ is not zero.
Thus we have $\alpha= \ab_{j_1} + \cdots + \ab_{j_\ell}$
for some $1 \le j_1,\dots,j_\ell \le m$.
Since
there exists $\wb \in \RR^n$ such that $\ab_i \cdot \wb = 1$
for any $1 \le i \le m$,
it follows that $k = \wb \cdot \alpha = \wb \cdot \ab_{j_1} + \cdots + \wb \cdot \ab_{j_\ell} = \ell$.
Thus $\alpha$ is a sum of $k$ vectors from $\Pc \cap \ZZ^n$, as desired.
\end{proof}

Since $\rho(e_i) \cdot \wb = 1$ ($1 \le i \le m$) for $\wb = (1/2,\dots,1/2)$,
we have the following.

\begin{cor}
    \label{normaltonormal}
Let $G$ be a graph.
If $\PMSG$ is normal, then
${\rm Ed}(G)$ is normal
(i.e., $G$ satisfies the odd cycle condition).
\end{cor}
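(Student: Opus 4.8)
The plan is to deduce this directly from Proposition~\ref{normalfact}, applied to the pair $\Pc={\rm Ed}(G)\subset\Pc'=\PMSG$ (the inclusion having been recorded just above). Equation~(\ref{latticePS}) already presents $\PMSG\cap\ZZ^n$ in exactly the shape demanded by Proposition~\ref{normalfact}, namely as $\{(\rho(e_1),\dots,\rho(e_m))\,x : x\in Q\}$ with $Q=\stab(L(G))\cap\ZZ^m$; moreover $Q\subset\ZZ_{\ge 0}^m$ since the stable set polytope is contained in $[0,1]^m$, so all of its lattice points are $(0,1)$-vectors. Finally $\wb=(1/2,\dots,1/2)$ satisfies $\rho(e_i)\cdot\wb=1$ for every $i$, as noted before the statement. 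Thus Proposition~\ref{normalfact} will yield that ${\rm Ed}(G)$ is normal whenever $\PMSG$ is normal, and Proposition~\ref{epnormal} then rephrases the normality of ${\rm Ed}(G)$ as the odd cycle condition on $G$.

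The only hypothesis of Proposition~\ref{normalfact} that needs a separate word is that $\{\rho(e_1),\dots,\rho(e_m)\}$ is \emph{exactly} ${\rm Ed}(G)\cap\ZZ^n$, so that the same list of vectors legitimately serves both $\Pc$ and $\Pc'$. I would check this elementary fact directly: since every edge has two distinct endpoints, ${\rm Ed}(G)$ lies in the hyperplane $\{x\in\RR^n:\sum_i x_i=2\}$, so any $\alpha\in{\rm Ed}(G)\cap\ZZ^n$ is a nonnegative integer vector of coordinate sum $2$, hence $\alpha=2\eb_i$ or $\alpha=\eb_i+\eb_j$ with $i\ne j$. Writing $\alpha$ as a convex combination $\sum_{e}\lambda_e\rho(e)$ and comparing, for each $k\notin\{i,j\}$, the $k$-th coordinate forces $\lambda_e=0$ whenever $e$ meets a vertex outside $\{i,j\}$; since edges have two distinct endpoints this leaves room only for $e=\{i,j\}$, which rules out $\alpha=2\eb_i$ and shows $\alpha=\eb_i+\eb_j$ occurs only when $\{i,j\}\in E$. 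Hence ${\rm Ed}(G)\cap\ZZ^n=\{\rho(e_1),\dots,\rho(e_m)\}$.

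I do not expect any real obstacle here beyond this bookkeeping: the substantive content has been front-loaded into Proposition~\ref{normalfact} together with equation~(\ref{latticePS}), and the remaining task is just to confirm that the generating vectors of ${\rm Ed}(G)$ and the columns of $A_G$ appearing in (\ref{latticePS}) are literally the same list $\rho(e_1),\dots,\rho(e_m)$ — which is precisely what the lattice-point computation above guarantees — and then to invoke Proposition~\ref{epnormal}.
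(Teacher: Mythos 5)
Your proposal is correct and follows exactly the route the paper takes: the corollary is obtained by applying Proposition~\ref{normalfact} to ${\rm Ed}(G)\subset\PMSG$ via equation~(\ref{latticePS}) with $\wb=(1/2,\dots,1/2)$, and then invoking Proposition~\ref{epnormal}. Your extra verification that ${\rm Ed}(G)\cap\ZZ^n=\{\rho(e_1),\dots,\rho(e_m)\}$ is a correct piece of bookkeeping that the paper leaves implicit.
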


Given a lattice polytope $\Pc \subset \RR^n$, 
let $A = (\ab_1,\dots,\ab_m)$ where 
 $\Pc \cap \ZZ^n = \{\ab_1,\dots,\ab_m\}$.
It is known \cite{Stu} that 
the toric ideal $I_\Pc$ of $\Pc$ is generated by binomials
$\yb^\ab - \yb^\bb \in K[y_1,\dots,y_m]$ such that 
$A(\ab - \bb) ={\bf0}$ and $\deg \yb^\ab =\deg \yb^\bb $.


\begin{prop}
\label{doukei}
Let $G$ be a pseudotree which has no even cycles. 
the toric ring $K[\PMSG]$ of $\PMSG$ is 
isomorphic to the toric ring $ K[\stab (L(G))]$ of
$\stab (L(G))$.
\end{prop}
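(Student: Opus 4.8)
The plan is to use the incidence matrix $A_G$ itself to identify the two toric rings. The starting point is the identity \eqref{latticePS}, which says that $A_G$ maps $\stab(L(G))\cap\ZZ^m$ onto $\PMSG\cap\ZZ^n$. The crucial point is that, under our hypothesis, this map is a \emph{bijection}. Indeed, if $G$ is a tree then $A_G$ is an $n\times(n-1)$ matrix with linearly independent columns (peel off leaves), while if $G$ has a cycle then that cycle is odd, so $G$ is a connected non-bipartite graph with exactly $n$ edges and $A_G$ is an $n\times n$ matrix of full rank (its determinant is $\pm 2$); in either case $A_G\colon\RR^m\to\RR^n$ is injective, so distinct lattice points of $\stab(L(G))$ have distinct images in $\PMSG$. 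Fix the labelling $\stab(L(G))\cap\ZZ^m=\{\bb_1,\dots,\bb_M\}$, so that $\PMSG\cap\ZZ^n=\{A_G\bb_1,\dots,A_G\bb_M\}$ with all $A_G\bb_i$ distinct.

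With this labelling, $I_{\stab(L(G))}$ and $I_{\PMSG}$ are ideals of the \emph{same} polynomial ring $K[y_1,\dots,y_M]$, and I would compare them using the binomial description recalled just before the statement. A binomial $\yb^{\ub}-\yb^{\vb}$ lies in $I_{\PMSG}$ exactly when $\sum_i u_i=\sum_i v_i$ and $\sum_i u_i(A_G\bb_i)=\sum_i v_i(A_G\bb_i)$, i.e.\ $A_G\big(\sum_i u_i\bb_i-\sum_i v_i\bb_i\big)={\bf 0}$; by injectivity of $A_G$ this is equivalent to $\sum_i u_i\bb_i=\sum_i v_i\bb_i$ together with $\sum_i u_i=\sum_i v_i$, that is, to $\yb^{\ub}-\yb^{\vb}\in I_{\stab(L(G))}$. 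Since both ideals are generated by binomials of this shape, $I_{\PMSG}=I_{\stab(L(G))}$, hence
$$K[\PMSG]\ \cong\ K[y_1,\dots,y_M]/I_{\PMSG}\ =\ K[y_1,\dots,y_M]/I_{\stab(L(G))}\ \cong\ K[\stab(L(G))].$$
Equivalently, one may note that $(x,c)\mapsto(A_G x,c)$ is an injective group homomorphism $\ZZ^{m+1}\to\ZZ^{n+1}$ carrying the affine semigroup generated by $\{(\bb_i,1)\}$ onto the one generated by $\{(A_G\bb_i,1)\}$, i.e.\ inducing an isomorphism of the underlying affine semigroups of $K[\stab(L(G))]$ and $K[\PMSG]$.

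I do not expect a real obstacle: once $A_G$ is injective, everything is forced. The only step that genuinely uses the hypothesis, and the one worth double-checking, is the rank computation — a connected graph has an injective incidence matrix precisely when it is a tree or a unicyclic graph with an odd cycle, and the argument collapses as soon as $G$ has an even cycle (the alternating $\pm1$ vector on that cycle lies in $\ker A_G$) or a second independent cycle (then $m>n\ge\operatorname{rank}A_G$).
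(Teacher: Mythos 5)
Your proposal is correct and follows essentially the same route as the paper: both arguments reduce to the fact that the incidence matrix $A_G$ of a pseudotree without even cycles has rank $m$ (equivalently, is injective on $\RR^m$), use the identity \eqref{latticePS} to identify the lattice points of $\PMSG$ with the images $A_G\,\rho(S_i)$ of the stable sets of $L(G)$, and then match the binomial descriptions of the two toric ideals inside the same polynomial ring. Your direct determinant/leaf-peeling justification of injectivity replaces the paper's citation of \cite[Lemmas 5.5 and 5.6]{binomialideals}, but the substance is identical.
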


\begin{proof}
Let $G =(V,E)$ be a graph on the vertex set $V =\{1,2,\dots,n\}$
and the edge set $E =\{e_1,\dots,e_m\}$.
Let
$A_G= (\rho(e_1) , \dots, \rho(e_m))$
be the vertex-edge incidence matrix of $G$
and let $B_G = (\rho(S_1), \dots, \rho(S_r))$
where $\{S_1,\dots, S_r\}$ is the set of all stable sets of $L(G)$.

If $G$ is not a tree, then
$n=m$ and hence
$A_G$ is an $m \times m$ matrix.
It is known \cite[Lemmas 5.5 and 5.6]{binomialideals} that $A_G$ is a regular matrix if and only if  $G$ is a pseudotree which has no even cycles.
If $G$ is a tree, then $n=m+1$ and 
$A_G$ is an $(m+1) \times m$ matrix of rank $m$.

Since the rank of the $n \times m$ matrix $A_G$ is $m$, for any $\ub \in \RR^r$,
$A_G B_G \ub = {\bf 0}$ if and only if $B_G \ub = {\bf 0}$.
From (\ref{latticePS}),
$\PMSG \cap \ZZ^n =\{ A_G \rho(S_1), \dots,A_G \rho(S_r)\}.
$
Hence we have
\begin{eqnarray*}
\yb^\ab - \yb^\bb \in I_\PMSG
&\Leftrightarrow&
A_G B_G (\ab - \bb) = {\bf 0} \mbox{ and } \deg \yb^\ab = \deg  \yb^\bb \\
&\Leftrightarrow    & B_G (\ab - \bb) = {\bf 0} \mbox{ and } \deg \yb^\ab = \deg  \yb^\bb \\
&\Leftrightarrow    &
\yb^\ab - \yb^\bb \in I_{{\rm Stab}(L(G))}.
\end{eqnarray*}
Thus $K[\PMSG] \simeq K[\yb]/ I_\PMSG = K[\yb]/ I_{\rm Stab}(L(G)) \simeq K[\stab (L(G))]$ as desired.
\end{proof}

\begin{prop}
\label{pseudotree normal}
Let $G$ be a pseudotree. Then $\PMSG$ is normal.
\end{prop}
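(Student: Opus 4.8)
My plan is to split the argument according to whether the pseudotree $G$ is bipartite. If $G$ is bipartite --- that is, $G$ is a tree or its unique cycle is even --- then Proposition~\ref{IDP} already gives that $\PMSG$ has IDP, hence is normal, and there is nothing more to do. So assume $G$ is not bipartite. Then $G$ has exactly one cycle $C$, and $C$ is odd; in particular $G$ has no even cycle, so Proposition~\ref{doukei} yields $K[\PMSG]\simeq K[\stab(L(G))]$, and it suffices to prove that $\stab(L(G))$ is normal.

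Next I would record the structure of $L(G)$. For each vertex $v$ of $G$ the $\deg(v)$ edges incident to $v$ span a clique $K_{\deg(v)}$ in $L(G)$; any two such cliques meet in at most one vertex of $L(G)$, and, since $G$ is a pseudotree, their intersection pattern is tree-like apart from the single cycle $C$. If $|C|=3$, this forces the maximal cliques of $L(G)$ to admit a clique tree, so $L(G)$ is chordal, hence perfect; by the quoted characterization $\stab(L(G))$ is then compressed, in particular normal. If $|C|=2k+1\ge 5$, the edges of $C$ span an induced $(2k+1)$-cycle, and $L(G)$ is obtained from this $C_{2k+1}$ by repeatedly gluing a clique along an edge or a vertex of the current graph --- that is, $L(G)$ arises from $C_{2k+1}$ together with a family of complete graphs by clique sums.

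So the work is concentrated in the case $|C|=2k+1\ge 5$, where I would establish normality of $\stab(L(G))$ directly. The base step is $\stab(C_{2k+1})$: by the classical fact that this polytope is cut out by $x_i\ge 0$, $x_i+x_{i+1}\le 1$ (cyclic indices), and the single odd-hole inequality $\sum_i x_i\le k$, the lattice points of $t\,\stab(C_{2k+1})$ are exactly the integer vectors $x\ge \mathbf{0}$ with $x_i+x_{i+1}\le t$ for all $i$ and $\sum_i x_i\le tk$. I would decompose such a vector into $t$ vectors $\rho(S_1),\dots,\rho(S_t)$ with $S_j$ stable in $C_{2k+1}$ by induction on $t$: peel off one stable set $S$ with $\rho(S)\le x$ that meets every \emph{tight} edge $\{i,i+1\}$ (those with $x_i+x_{i+1}=t$) in exactly one vertex and has $|S|\ge\sum_i x_i-(t-1)k$; the bound $\sum_i x_i\le tk$ prevents all edges from being tight (otherwise $x$ would be constant on the odd cycle $C_{2k+1}$, violating that bound), so the tight edges form a disjoint union of arcs and such an $S$ exists, after which $x-\rho(S)$ satisfies the constraints for $t-1$. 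Finally I would propagate normality through the clique sums building $L(G)$: a stable set of a clique sum $H_1\cup_K H_2$ restricts to stable sets of $H_1$ and of $H_2$ agreeing on the clique $K$, so $\stab(H_1\cup_K H_2)$ is the fiber product of $\stab(H_1)$ and $\stab(H_2)$ over the simplex $\stab(K)$, and gluing in a clique (whose stable set polytope is a unimodular simplex) preserves normality; hence normality passes from $\stab(C_{2k+1})$ to $\stab(L(G))$.

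I expect the main obstacle to be precisely this last case $|C|\ge 5$: the combinatorial decomposition lemma for $\stab(C_{2k+1})$ (choosing $S$ so that it covers every tight edge, copes with vanishing coordinates of $x$, and is still large enough) and the verification that clique sums preserve normality of stable set polytopes both require real work. The bipartite case and the $|C|=3$ case, by contrast, are immediate from the results already quoted.
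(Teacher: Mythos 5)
Your argument follows essentially the same route as the paper: dispose of the bipartite case via Proposition~\ref{IDP}, pass to $\stab(L(G))$ via Proposition~\ref{doukei} when $G$ is non-bipartite, and use that $L(G)$ is a clique-sum of the odd cycle $L(C)=C_{2n+1}$ with cliques. The two steps you flag as the ``real work'' are exactly the two the paper settles by citation rather than by direct proof: normality of $\stab(C_{2n+1})$ follows from the squarefree quadratic initial ideal of the toric ideal of the stable set polytope of an almost bipartite graph \cite[Theorem~8.1]{almostbip}, and the preservation of normality under clique-sums of stable set polytopes is \cite[Proposition~1]{MOS}. Your direct peeling argument for $t\,\stab(C_{2k+1})$ and the fiber-product argument for clique-sums are both viable, but as written they are sketches; to make the proof complete you should either carry out the remaining case analysis (in particular, choosing the tight-edge-covering stable set inside an arc of tight edges when some coordinates of $x$ vanish, and verifying the lower bound $|S|\ge\sum_i x_i-(t-1)k$ simultaneously) or simply invoke these two references, which is what the paper does.
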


\begin{proof}
From Proposition \ref{bipartitenormal}, we may assume that $G$ is not bipartite. 
Then, by Proposition~\ref{doukei}, $K[\PMSG] \cong K[\stab (L(G))]$.
It is known \cite[Theorem 8.1]{almostbip} that 
the toric ideal of the stable set polytope of an almost bipartite graph has a squarefree quadratic initial ideal. 
It then follows that $\stab (C_{2n+1})$ is normal.
In addition, $\stab (K_{m})$ is a simplex and hence normal.
It is known \cite[Proposition 1]{MOS} that 
the stable set polytope of 
the clique-sum of simple graphs $G_1$ and $G_2$ is normal if and only if both $\stab(G_1)$ and $\stab(G_2)$ are normal.
Since $L(G)$ is a clique-sum of an odd cycle $C_{2n+1} = L(C_{2n+1})$ and some cliques, ${\rm Stab}(L(G))$ is normal.
\end{proof}

\section{Facets of perfectly matchable subgraph polytopes}

In this section, we introduce inequalities for the facets of $\PMSG$ given in \cite{PMS, PMS2}. 
We will see that these inequalities depend on whether $G$ is bipartite.
Let
\begin{center}
$\mathscr{T}=\{\mathit{X}\subset\mathit{V} :$ each component of $G[X]$ has an odd number of vertices\}.
\end{center}
For any $\mathit{A}\subset \mathit{V}$, let 
$\Gamma(A)$ denote the subset of $V \setminus A$ that consists of vertices
adjacent to at least one vertex in $A$. 
For any $S\subset V$, let $\theta(S)$ be the number of connected components of the induced subgraph $G[S]$.

\begin{prop}[\cite{PMS2}]
\label{inequalityANY}
Let $G=(V,E)$ be a graph.
Then $\PMSG$ is a set of vectors $x \in \RR^V$ such that
\begin{eqnarray}
0\leq x(v) \leq 1 \mbox{ for all }  v \in V\label{zeroone1}\\
x(S)-x(\Gamma(S))\leq|S|-\theta(S) \label{facet1}
\end{eqnarray}
for all $S\in\mathscr{T}$ such that every component of $G[S]$ consists of a single vertex or else is a nonbipartite graph with an odd number of vertices.
\end{prop}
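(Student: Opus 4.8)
The plan is to prove the set equality $\PMSG = P$, where $P\subseteq\RR^V$ denotes the polyhedron cut out by the inequalities (\ref{zeroone1}) and (\ref{facet1}). I would split this into three parts: (a) $\rho(S)\in P$ for every $S\in\mathscr{W}(G)$, so that $\PMSG\subseteq P$; (b) every lattice point of $P$ equals $\rho(S)$ for some $S\in\mathscr{W}(G)$; and (c) $P$ is an integral polytope. These suffice: $P$ is bounded by (\ref{zeroone1}), hence is the convex hull of its vertices, which by (c) are lattice points and by (b) are of the form $\rho(S)$ with $S\in\mathscr{W}(G)$; thus $P\subseteq\PMSG$, and together with (a) this gives $P=\PMSG$.

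For part (a) the box constraints are immediate, so I would fix $S\in\mathscr{W}(G)$, a perfect matching $M$ of $G[S]$, and $S'\in\mathscr{T}$, and let $C_1,\dots,C_{\theta(S')}$ be the (odd-order) components of $G[S']$. Each $M$-edge meeting a vertex of $C_i$ either has both ends in $C_i$ or has its other end in $S\cap\Gamma(S')$ (it cannot join $C_i$ to a different $C_j$, as these lie in different components of $G[S']$, and it cannot leave $S$); since $S\cap C_i$ is saturated by $M$, the number of its vertices matched inside $C_i$ is even, hence at most $|C_i|-1$. Summing over $i$ the (pairwise distinct) vertices of $S\cap\Gamma(S')$ used by $M$-edges leaving the $C_i$'s yields $|S\cap\Gamma(S')|\ge|S\cap S'|-\sum_i(|C_i|-1)=|S\cap S'|-|S'|+\theta(S')$, which is exactly (\ref{facet1}) evaluated at $x=\rho(S)$. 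For part (b), given $x\in P\cap\ZZ^V$ I set $S=\{v:x_v=1\}$, so $x=\rho(S)$ by (\ref{zeroone1}), and I must show $G[S]$ has a perfect matching. If not, I take the Gallai--Edmonds decomposition of $G[S]$, with $D$ the set of vertices missed by some maximum matching and $A$ the set of vertices of $S\setminus D$ having a neighbour in $D$. The components of $G[D]$ are factor-critical, hence single vertices or nonbipartite graphs with an odd number of vertices, so $T:=D$ is admissible in (\ref{facet1}); moreover $\Gamma(T)\cap S=A$, and since $G[S]$ has no perfect matching the Gallai--Edmonds deficiency formula gives $\theta(T)=\#\{\text{components of }G[D]\}>|A|$. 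Then $\rho(S)(T)-\rho(S)(\Gamma(T))=|T|-|A|>|T|-\theta(T)$, contradicting $x\in P$; hence $S\in\mathscr{W}(G)$.

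The real work is part (c), and this is where I expect the main obstacle. I would prove it by an uncrossing argument in the style of Edmonds and Cunningham: given a vertex $x^{*}$ of $P$, analyse the family of inequalities (\ref{facet1}) tight at $x^{*}$, show it can be replaced by a structured (laminar) subfamily still pinning down $x^{*}$, and deduce that the associated coefficient matrix is unimodular, forcing $x^{*}\in\ZZ^V$; equivalently, one shows the system (\ref{zeroone1})--(\ref{facet1}) is totally dual integral. A second, more conceptual route is to exploit the identity $\PMSG=A_G\cdot\operatorname{MP}(G)$, where $\operatorname{MP}(G)$ is the matching polytope of $G$ and $A_G$ the vertex--edge incidence matrix (indeed $A_G\chi^M=\rho(V(M))$ and $V(M)$ runs over $\mathscr{W}(G)$; compare (\ref{latticePS})), and then to eliminate the edge variables from Edmonds' inequality description of $\operatorname{MP}(G)$ by Fourier--Motzkin, verifying that the surviving facets are precisely (\ref{zeroone1}) and (\ref{facet1}). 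Either way, part (c)---the integrality of $P$, equivalently the inclusion $P\subseteq\PMSG$---is the hard point; parts (a) and (b) are short by comparison, and (c) is essentially the content established in \cite{PMS,PMS2}.
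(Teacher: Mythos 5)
Your three-part skeleton is sound, and parts (a) and (b) are correct and essentially complete: the counting argument for (a) (each odd component $C_i$ of $G[S']$ can absorb at most $|C_i|-1$ matched vertices internally, so the excess must be matched into $S\cap\Gamma(S')$, and these partners are pairwise distinct) and the Gallai--Edmonds argument for (b) (the set $D$ of exposed vertices of $G[S]$ has factor-critical, hence admissible, components, and the deficiency formula forces $\theta(D)>|A|$, violating (\ref{facet1})) are exactly right. Note, however, that the paper offers no proof of this proposition at all --- it is imported verbatim from \cite{PMS2} --- so there is no internal argument to compare yours against.

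The genuine gap is part (c), the integrality of the polyhedron $P$, which you correctly identify as the crux but do not prove. Without it, (a) and (b) only show that $P$ and $\PMSG$ have the same lattice points, not that they coincide as polytopes; the entire force of the proposition is that the listed system has no non-integral vertices, i.e.\ that the inclusion $P\subseteq\PMSG$ holds. Your two sketched routes are both viable in principle, and the second one --- realizing $\PMSG$ as the image of the matching polytope of $G$ under the vertex--edge incidence matrix $A_G$ and then controlling the projection of Edmonds' inequality description --- is essentially what Balas and Pulleyblank actually do in \cite{PMS2}, via a characterization of the projection cone rather than literal Fourier--Motzkin elimination. But neither route is carried out, and the step is deferred to the very reference from which the proposition is quoted. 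So what you have is an honest and correct reduction of the statement to its polyhedral core, together with a full verification of everything surrounding that core, but not an independent proof of the core itself.
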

A graph $G=(V,E)$ is called \textit{critical} (or \textit{hypomatchable}) if, for every $v\in V$,
$G\setminus\{v\}$ has a perfect matching. 
A critical graph is a nonbipartite graph with an odd number of vertices.

\begin{prop}[\cite{PMS2}]
\label{nonbipartitefacet}
Let $G$ be a nonbipartite graph.
For $S\in\mathscr{T}$, the inequality (\ref{facet1}) is facet-inducing for $\PMSG$ if and only if 
$S$ satisfies the following conditions{\rm :}
\begin{itemize}
\item[{\rm (i)}] every component of $G[S]$ is critical{\rm ;}
\item[{\rm (ii)}] every component of $G\setminus(S\cup\Gamma(S))$ is nonbipartite{\rm ;}
\item[{\rm (iii)}] the graph obtained from $G[S\cup\Gamma(S)]$ by deleting all edges with both ends in $\Gamma(S)$ is connected.
\end{itemize}
\end{prop}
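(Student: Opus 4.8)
The plan is to reduce everything to computing the dimension of the face cut out by the inequality. First note that $(\ref{facet1})$ is valid on $\PMSG$ for every $S\in\mathscr{T}$: given $W\in\mathscr{W}(G)$ and a perfect matching $M$ of $G[W]$, each component $S_j$ of $G[S]$ is matched by $M$ either inside $S_j$ (an even number of its vertices) or to vertices of $W\cap\Gamma(S)$ (a vertex of $S$ has no neighbour outside $S\cup\Gamma(S)$, and no neighbour in another component of $G[S]$), so $|W\cap S_j|$ is at most (an even number) $+$ ($\#$ edges of $M$ from $S_j$ to $\Gamma(S)$); summing over $j$ and using that each $|S_j|$ is odd gives $x(S)-x(\Gamma(S))=\sum_j|W\cap S_j|-|W\cap\Gamma(S)|\le|S|-\theta(S)$. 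So the task is to decide when the face $F:=\PMSG\cap\{x(S)-x(\Gamma(S))=|S|-\theta(S)\}$ has dimension $\dim\PMSG-1=n-1$; we may assume $G$ is connected. Chasing equality through the chain above yields a combinatorial description of $F$, which I will call $(\star)$: $\rho(W)\in F$ if and only if there is a perfect matching of $G[W]$ under which, for each component $S_j$ of $G[S]$, either $S_j\subseteq W$ and exactly one vertex of $S_j$ is matched to $\Gamma(S)$, or $|W\cap S_j|=|S_j|-1$ and $W\cap S_j$ is matched inside $S_j$; every vertex of $W\cap\Gamma(S)$ is matched to $S$; and $W\cap V(G')$ induces a perfectly matchable subgraph of $G':=G\setminus(S\cup\Gamma(S))$.

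For the ``if'' direction, assume (i)--(iii). Fix one vertex $u_j$ in each component $S_j$ of $G[S]$; by (i) the set $W_0:=S\setminus\{u_1,\dots,u_{\theta(S)}\}$ satisfies $(\star)$, so $\rho(W_0)\in F$. I would then exhibit $n-1$ vertices of $F$ whose difference vectors from $\rho(W_0)$ are linearly independent, in three families. First, $W_0\cup W'$ for $W'\in\mathscr{W}(G')$ (these lie on $F$ by $(\star)$): by (ii) every component of $G'$ is nonbipartite, so by the dimension formula ${\mathcal P}_{G'}$ is full-dimensional and the differences $\rho(W')$ span $\RR^{V(G')}$. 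Second, for $w\in S_j\setminus\{u_j\}$, the set obtained from $W_0$ by deleting $w$ instead of $u_j$ from $S_j$ (legal since $S_j$ is critical), contributing the difference $\eb_{u_j}-\eb_w$. Third, contract each connected $G[S_j]$ to a node and form the bipartite incidence graph between these $\theta(S)$ nodes and $\Gamma(S)$; by (iii) it is connected, so fix a spanning tree, and for each tree edge $\{[S_j],\tau\}$ take $\{\tau\}\cup S_j\cup\bigcup_{k\neq j}(S_k\setminus\{u_k\})$, which lies on $F$ by $(\star)$ (match $\tau$ into $S_j$, again using criticality), contributing the difference $\eb_\tau+\eb_{u_j}$. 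A short leaf-peeling induction on the spanning tree shows these $|V(G')|+(|S|-\theta(S))+(\theta(S)+|\Gamma(S)|-1)=n-1$ vectors are jointly independent, so $\dim F=n-1$ and $(\ref{facet1})$ is facet-inducing.

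For the ``only if'' direction I would argue the contrapositive: if any of (i)--(iii) fails, $F$ satisfies a linear equation independent of $x(S)-x(\Gamma(S))=|S|-\theta(S)$, hence $\dim F\le n-2$ (using that $\PMSG$ is full-dimensional). If (i) fails, some component $S_j$ of $G[S]$ is odd but not critical, so there is $w\in S_j$ with $G[S_j\setminus\{w\}]$ having no perfect matching; by $(\star)$ every $W$ with $\rho(W)\in F$ contains $w$, so $F\subseteq\{x_w=1\}$. If (ii) fails, some component of $G'$ is bipartite with parts $A,B$, and by $(\star)$ every such $W$ has $|W\cap A|=|W\cap B|$, so $F\subseteq\{x(A)=x(B)\}$. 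If (iii) fails, deleting the $\Gamma(S)$-internal edges from $G[S\cup\Gamma(S)]$ splits it into $q\ge2$ pieces $P_1,\dots,P_q$; the counting used for validity, applied inside each $P_i$ (by $(\star)$ a vertex of $S\cap P_i$ and a matched vertex of $\Gamma(S)\cap W$ in $P_i$ both stay within $P_i$), forces $x(S\cap P_i)-x(\Gamma(S)\cap P_i)=|S\cap P_i|-\theta(S\cap P_i)$ on $F$ for each $i$; these $q$ equations sum to the defining one, so $F$ satisfies $q-1\ge1$ extra independent equations.

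The main obstacle is the equality analysis $(\star)$ — in particular keeping honest track of the possibility that a vertex of a component $S_j$ is matched out to $\Gamma(S)$ rather than inside $S_j$ — since once $(\star)$ is available, the ``if'' direction is a dimension count and the ``only if'' direction reduces to reading off an extra linear equation in each failure case. It is also worth noting that the necessity of (i) is precisely where the gap between ``odd'' and ``critical'' components enters; phrasing it as ``a non-critical odd component forces some vertex coordinate to equal $1$ on $F$'' sidesteps any need for the full Gallai--Edmonds structure theorem.
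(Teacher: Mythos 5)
The paper does not actually prove this proposition: it is quoted from Balas and Pulleyblank \cite{PMS2}, so there is no internal argument to compare yours against. Judged on its own, your proof is correct and amounts to a self-contained reconstruction of the standard facet argument. The validity computation and the equality description $(\star)$ are sound: equality forces each component $S_j$ of $G[S]$ to send at most one matching edge into $\Gamma(S)$ (with $|W\cap S_j|=|S_j|-1+m_j$ and $m_j\in\{0,1\}$), forces every vertex of $W\cap\Gamma(S)$ to be matched into $S$, and hence forces $W\cap V(G')$ to be perfectly matchable inside $G'=G\setminus(S\cup\Gamma(S))$. Your three families of points on the face supply $|V(G')|+(|S|-\theta(S))+(\theta(S)+|\Gamma(S)|-1)=n-1$ directions, and independence does follow from the support split ($V(G')$ versus $S\cup\Gamma(S)$) together with the fact that the unsigned incidence vectors $\eb_\tau+\eb_{u_j}$ of a spanning tree are linearly independent. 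In each failure case the extra valid equation you extract ($x_w=1$ when a component is odd but not critical, $x(A)=x(B)$ for a bipartite component of $G'$, and the per-piece equalities whose sum is (\ref{facet1}) when the contracted bipartite graph is disconnected) is genuinely independent of the facet equation, so the face drops dimension. This is almost certainly the same method as in \cite{PMS2}.

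One caveat: ``we may assume $G$ is connected'' is not a harmless reduction but a hypothesis the statement actually needs. Take $G$ to be the disjoint union of a star $K_{1,m}$ and a triangle, with $S$ the set of leaves of the star, so $\Gamma(S)$ is the center; then $G$ is nonbipartite, conditions (i)--(iii) all hold, yet every perfectly matchable set meets the star in $\emptyset$ or in $\{c,\ell\}$, so $x(S)-x(\Gamma(S))=0$ identically on $\PMSG$ and (\ref{facet1}) is an implied equality rather than a facet. The proposition should therefore be read for connected $G$ (which is how the paper applies it), and in that setting your argument is complete.
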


Remark that, if $|S|=1$, then $S$ satisfies conditions (i) and (iii).

\begin{prop}[\cite{PMS}]
\label{bipartitefacetfor}
Let $G=(V,E)$ be a bipartite graph on the vertex set $V=V_1\cup V_2$.
Then $\PMSG$ is a set of vectors $x \in \RR^V$ such that
\begin{eqnarray}
0\leq x(v) \leq 1 \mbox{ for all }  v \in V, \label{zeroichi}\\ 
x(S)-x(\Gamma(S))\leq0 \mbox{ for all } \emptyset \ne S\subset V_1, \label{facet2}\\
x(V_1)-x(V_2)=0. \label{hitoshii}
\end{eqnarray}

\end{prop}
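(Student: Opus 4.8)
The plan is to derive this bipartite description of $\PMSG$ from the already-established general description in Proposition~\ref{inequalityANY}, specialized to the bipartite case, and then to dispose of the redundant inequalities. First I would observe that when $G$ is bipartite, every induced subgraph $G[S]$ is also bipartite, so for $S \in \mathscr{T}$ each component of $G[S]$ must be a single vertex (an odd-order bipartite connected graph with no odd cycle can only be $K_1$). Hence in Proposition~\ref{inequalityANY} the only sets $S$ contributing inequalities of type \eqref{facet1} are the \emph{independent} sets $S \subset V$, for which $\theta(S) = |S|$ and the inequality \eqref{facet1} reads $x(S) - x(\Gamma(S)) \le 0$. So from the general result, $\PMSG$ is cut out by $0 \le x(v) \le 1$ together with $x(S) - x(\Gamma(S)) \le 0$ for all nonempty independent $S \subset V$.

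Next I would reduce these to the claimed inequalities, namely those with $\emptyset \ne S \subset V_1$, plus the equation $x(V_1) = x(V_2)$. The equation itself comes from the fact (stated in Section~2) that $\PMSG$ lies in the hyperplane $\{x : x(V_1) = x(V_2)\}$; equivalently, it follows from combining the inequality for $S = V_1$, namely $x(V_1) - x(\Gamma(V_1)) = x(V_1) - x(V_2) \le 0$ (since $\Gamma(V_1) = V_2$ for connected $G$, and one handles disconnected $G$ componentwise), with the inequality for $S = V_2$, giving $x(V_2) - x(V_1) \le 0$. It remains to show that an arbitrary nonempty independent $S \subset V$ gives a consequence of the listed constraints. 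Write $S = S_1 \cup S_2$ with $S_i = S \cap V_i$. If one of the $S_i$ is empty we are done. Otherwise I would argue that $x(S) - x(\Gamma(S)) \le 0$ is implied by $x(S_1) - x(\Gamma(S_1)) \le 0$ together with the bound $x(v) \le 1$ on the vertices of $S_2$ and $x(v) \ge 0$ on $\Gamma(S_1)$: since $S_2 \subset V_2$ and $\Gamma(S_2) \subset V_1$, one compares $x(S_2)$ against $x(\Gamma(S_1) \cap V_2)$ and the relevant unit-cube inequalities, noting $\Gamma(S) \supseteq (\Gamma(S_1) \setminus S_2) \cup (\Gamma(S_2) \setminus S_1)$.

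The main obstacle I anticipate is precisely this last redundancy argument: one must be careful that $\Gamma(S)$ and $\Gamma(S_1)$ differ (vertices of $S_1$ may lie in $\Gamma(S_2)$ and conversely), so the bookkeeping of which vertices appear on which side of the inequality has to be done cleanly, ideally by writing $x(S) - x(\Gamma(S))$ as $\bigl(x(S_1) - x(\Gamma(S_1))\bigr)$ plus a correction term that is manifestly $\le 0$ under $0 \le x(v) \le 1$. An alternative, possibly cleaner route would be to avoid redundancy entirely and instead verify the two inclusions directly: $(\subseteq)$ every $\rho(S)$ with $S \in \mathscr{W}(G)$ satisfies \eqref{zeroichi}--\eqref{hitoshii}, using that a perfect matching of $G[S]$ induces a bijection between $S \cap V_1$ and $S \cap V_2$ (this handles \eqref{hitoshii}) and that for independent $T \subset V_1$ every vertex of $T \cap S$ is matched into $\Gamma(T) \cap S$ (this handles \eqref{facet2}); and $(\supseteq)$ every vertex of the rational polytope defined by \eqref{zeroichi}--\eqref{hitoshii} is a $(0,1)$-vector of the form $\rho(S)$ with $S \in \mathscr{W}(G)$, which one can get from total unimodularity of the incidence structure together with Hall's theorem guaranteeing a perfect matching of $G[S]$ whenever $x(T) \le x(\Gamma(T))$ holds for all $T \subseteq S \cap V_1$. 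Since the statement is attributed to \cite{PMS}, I would present the short derivation from Proposition~\ref{inequalityANY} as the primary proof and relegate the Hall-theorem verification to a remark if space permits.
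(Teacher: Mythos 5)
First, a point of reference: the paper offers no proof of Proposition~\ref{bipartitefacetfor} --- it is quoted from Balas--Pulleyblank \cite{PMS} --- so there is no in-paper argument to compare yours against. Judged on its own terms, your primary route of specializing Proposition~\ref{inequalityANY} to the bipartite case is legitimate (within this paper both propositions are citations, so no circularity arises, though historically the bipartite result of \cite{PMS} predates the general one of \cite{PMS2}). Your first reduction is correct: in a bipartite graph the admissible $S$ in Proposition~\ref{inequalityANY} are exactly the nonempty independent sets, for which $\theta(S)=|S|$, and the equation \eqref{hitoshii} does follow from the inequalities for $S=V_1$ and $S=V_2$ applied componentwise.

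The gap is in the redundancy step. Write $S=S_1\sqcup S_2$ with $S_i=S\cap V_i$. Because $S$ is independent, no vertex of $S_1$ lies in $\Gamma(S_2)$ and vice versa --- the very possibility you flag as the ``main obstacle'' cannot occur --- and in fact $\Gamma(S)=\Gamma(S_1)\sqcup\Gamma(S_2)$ exactly, so $x(S)-x(\Gamma(S))=\bigl(x(S_1)-x(\Gamma(S_1))\bigr)+\bigl(x(S_2)-x(\Gamma(S_2))\bigr)$ with no correction term at all. The real difficulty is the second summand: it is \emph{not} manifestly $\le 0$ under the box constraints, and the ingredients you propose ($x(v)\le 1$ on $S_2$ and $x(v)\ge 0$ on $\Gamma(S_1)$) cannot produce it, since $x(S_2)$ may exceed $|\Gamma(S_2)|$ a priori. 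What does work is the complementation trick implicit in the Remark after Proposition~\ref{bipartitefacet}: put $T=V_1\setminus\Gamma(S_2)$. Then $\Gamma(T)\subseteq V_2\setminus S_2$, so \eqref{facet2} applied to $T$, nonnegativity on $(V_2\setminus S_2)\setminus\Gamma(T)$, and the equation \eqref{hitoshii} give $x(V_1)-x(\Gamma(S_2))=x(T)\le x(\Gamma(T))\le x(V_2)-x(S_2)=x(V_1)-x(S_2)$, hence $x(S_2)\le x(\Gamma(S_2))$ (the case $T=\emptyset$ being immediate from nonnegativity and \eqref{hitoshii}). With that substitution your argument closes. Your alternative route (direct verification plus total unimodularity and Hall's theorem) is closer in spirit to the original proof in \cite{PMS}, but as sketched its ``$\supseteq$'' half is not yet a proof: total unimodularity applies to the vertex--edge incidence matrix of the bipartite graph, from which one projects the matching polytope, not to the exponentially large inequality system \eqref{zeroichi}--\eqref{hitoshii} itself.
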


\begin{prop}[\cite{PMS}]\label{bipartitefacet}
Let $G=(V_1\cup V_2,E)$ be a connected bipartite graph.
Then the following are true: 
\begin{itemize}
    \item[{\rm (a)}]
The inequality $0\leq x(v)$ is facet-inducing for $\PMSG$ if and only if
$v$ is not a cut vertex. (Note that every vertex of degree one 
    is not a cut vertex.)
    \item[{\rm (b)}]
Suppose that $G$ has at least two edges.
Then the inequality $x(v) \leq 1$ is facet-inducing for $\PMSG$ if and only if $\deg(v) \ge 2${\rm ;}
    \item[{\rm (c)}]
    For any $\emptyset \ne S\subsetneq V_1$, the inequality (\ref{facet2}) is facet-inducing for $\PMSG$ if and only if both $G[S\cup\Gamma(S)]$ and $G[(V_1\setminus S)\cup(V_2\setminus \Gamma(S))]$ are connected. 
\end{itemize}
\end{prop}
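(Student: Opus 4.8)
The plan is to prove the three parts of Proposition~\ref{bipartitefacet} by combining the facet description in Proposition~\ref{bipartitefacetfor} with standard dimension-counting arguments, using that $\dim\PMSG = n-1$ for a connected bipartite graph on $n$ vertices. Since $\PMSG$ lies on the hyperplane $x(V_1)=x(V_2)$, a facet is a face of dimension $n-2$, so for each candidate inequality I would either exhibit $n-1$ affinely independent vertices $\rho(S)$ with $S\in\mathscr{W}(G)$ satisfying it with equality (to show it is facet-inducing), or show that the face it cuts out is contained in a strictly larger face (to show it is not).

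For part (a), the inequality $0\le x(v)$: if $v$ is a cut vertex, removing $v$ splits $G$ into components, and I would argue that any $S\in\mathscr{W}(G)$ with $v\notin S$ must have $S$ distributed among the components in a parity-constrained way, forcing $\rho(S)$ to satisfy an additional equation (coming from the bipartition of one of the pieces relative to the whole), so the face $\{x(v)=0\}\cap\PMSG$ has dimension $<n-2$. Conversely, if $v$ is not a cut vertex, I would build $n-1$ affinely independent perfectly matchable subsets avoiding $v$: start from a perfect matching of $G\setminus\{v\}$-type configurations and perturb edge by edge, using connectivity of $G\setminus\{v\}$ to propagate. For part (b), $x(v)\le 1$: if $\deg(v)=1$ with unique neighbour $w$, then every $S\in\mathscr{W}(G)$ containing $v$ also contains $w$ (the edge $vw$ must be in the matching covering $v$), so $x(v)=1$ forces $x(w)=1$ on that face, killing a dimension; if $\deg(v)\ge 2$ I would again construct enough affinely independent covered subsets containing $v$, choosing for each neighbour the matching through that edge and extending via a spanning structure. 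For part (c), the inequality $x(S)-x(\Gamma(S))\le 0$: the equality face consists of those $T\in\mathscr{W}(G)$ that are ``balanced across the cut'', i.e.\ use exactly as many $S$-vertices as $\Gamma(S)$-vertices; I would show this face decomposes according to the bipartite subgraphs $G[S\cup\Gamma(S)]$ and $G[(V_1\setminus S)\cup(V_2\setminus\Gamma(S))]$, so that its dimension is roughly $(\dim\mathcal P_{G[S\cup\Gamma(S)]}) + (\dim\mathcal P_{G[(V_1\setminus S)\cup(V_2\setminus\Gamma(S))]})$ plus a correction, and this equals $n-2$ exactly when both induced subgraphs are connected (each disconnected piece would drop the dimension via the extra bipartition hyperplane, as in the disjoint-union formula $\dim\PMSG=n-k$).

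The key technical device throughout is: a connected bipartite graph contributes a $1$-dimensional deficit from its global bipartition equation, and each extra connected component (or each ``forced'' dependency like $v$ cut or $\deg v=1$) contributes a further deficit. So the main obstacle, and the part requiring genuine care, is part (c): I must show that \emph{every} point of the equality face $\{x : x(S)=x(\Gamma(S))\}\cap\PMSG$ actually splits as $\rho(S_1)+\rho(S_2)$ with $S_1\subset S\cup\Gamma(S)$ perfectly matchable in $G[S\cup\Gamma(S)]$ and $S_2\subset (V_1\setminus S)\cup(V_2\setminus\Gamma(S))$ perfectly matchable in the complementary induced subgraph. This requires a matching-theoretic argument: given $T\in\mathscr{W}(G)$ with $|T\cap S|=|T\cap\Gamma(S)|$, take a matching $M$ covering $T$; edges of $M$ between $S$ and $V_2\setminus\Gamma(S)$ are impossible by definition of $\Gamma(S)$, but edges between $\Gamma(S)$ and $V_1\setminus S$ can occur, and one must use the balance condition together with an alternating-path / Hall-type exchange to re-route $M$ so that no edge crosses between the two induced pieces. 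Once that structural splitting is established, the dimension count follows by applying the product/disjoint-union formula of Section~2 to the two pieces, and the affine-independence constructions for the ``only if'' directions of (a) and (b) are routine propagation arguments along spanning trees.
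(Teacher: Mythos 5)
The paper does not actually prove this proposition: it is imported verbatim from Balas and Pulleyblank \cite{PMS}, so there is no in-paper argument to compare yours against, and I can only assess your plan on its own terms. The overall strategy is sound, and for (a) and (c) it is essentially complete once you invoke the dimension formula from Section 2 ($\dim \mathcal{P}_H = |V(H)| - k$ for a bipartite graph $H$ with $k$ connected components): the face $\{x(v)=0\}\cap\PMSG$ is precisely $\mathcal{P}_{G-v}$ sitting in a coordinate hyperplane, and the face $\{x(S)=x(\Gamma(S))\}\cap\PMSG$ is precisely the product $\mathcal{P}_{G_1}\times \mathcal{P}_{G_2}$ of the polytopes of the two induced pieces $G_1=G[S\cup\Gamma(S)]$ and $G_2=G[(V_1\setminus S)\cup(V_2\setminus\Gamma(S))]$ (they live in complementary coordinate subspaces, so the dimensions add exactly, with no correction term). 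In both cases ``facet'' is then equivalent to ``one component per piece,'' which is exactly the stated condition.

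Two further comments. First, the step you single out as the main obstacle in (c) --- rerouting a matching so that no edge crosses between the two pieces --- needs no alternating-path or Hall-type exchange. If $M$ is a matching covering $T$ and $a$ (resp.\ $b$) denotes the number of $M$-edges from $S$ to $\Gamma(S)$ (resp.\ from $\Gamma(S)$ to $V_1\setminus S$), then $|T\cap S|=a$, since every vertex of $S$ can only be matched into $\Gamma(S)$, while $|T\cap\Gamma(S)|=a+b$; the balance condition forces $b=0$, and edges from $S$ to $V_2\setminus\Gamma(S)$ are impossible by definition of $\Gamma$, so $M$ splits as it stands. Second, the genuinely under-developed step is the ``if'' direction of (b): for $\deg(v)\ge 2$ the face $\{x(v)=1\}\cap\PMSG$ is \emph{not} the perfectly matchable subgraph polytope of any induced subgraph, so the dimension formula does not apply, and you really must exhibit $n-1$ affinely independent vectors $\rho(T)$ with $T\in\mathscr{W}(G)$ and $v\in T$ (equivalently, show their affine span is all of $\{x(V_1)=x(V_2),\ x(v)=1\}$). ``Extending via a spanning structure'' is plausible but is the one place where a concrete construction is still owed. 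The ``only if'' directions of (a) and (b) --- each component of $G-v$ imposing its own balance equation when $v$ is a cut vertex, and a degree-one vertex forcing $x(w)=1$ on the face --- are correct as you state them.
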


\begin{remark}
    Suppose that both $G[S\cup\Gamma(S)]$ and $G[(V_1\setminus S)\cup(V_2\setminus \Gamma(S))]$ are connected for $S \subsetneq V_1$.
Let $S' = V_2 \setminus \Gamma (S)$.
Then we have $\Gamma(S') = V_1\setminus S $.
If (\ref{hitoshii}) holds, then we have 
$x(S)-x(\Gamma(S)) = x(S')-x(\Gamma(S')).$
\end{remark}

\section{Compressed perfectly matchable subgraph polytopes}

In this section, we prove Theorem \ref{compressed}. 
The following proposition is due to Sullivant \cite[Theorem 2.4]{Sul}
(and also appeared in Haase's dissertation \cite{Haase}).

\begin{prop}[\cite{Sul}]
\label{sullivantcompressed}
Let $\mathscr{P}$ be a lattice polytope having the irredundant linear description 
$\mathscr{P}=\{\xb \in\mathbb{R}^n : \ab_i \cdot \xb \geq b_i, \ i=1,\dots,s\},$
where $\ab_i \in \RR^n$ for $i=1,2,\dots,s$.
In addition, let $\mathscr{L} \subset \ZZ^n$ be a lattice spanned by $\mathscr{P} \cap \ZZ^n$.
Then $\mathscr{P}$ is compressed if and only if, for each $i$, there is at most one nonzero $m_i \in \RR$ such that 
    $\{\xb \in\mathscr{L}: \ab_i \cdot \xb = b_i+m_i\}\cap \mathscr{P} \ne \emptyset.$
\end{prop}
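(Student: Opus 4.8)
The plan is to derive the statement from the criterion recalled above, that $\mathscr{P}$ is compressed if and only if every pulling triangulation of $\mathscr{P}$ using only the lattice points of $\mathscr{P}$ is unimodular, with all volumes and lattice heights measured relative to $\mathscr{L}$. I would start with two reductions. First, rescaling $\ab_i$ by a nonzero scalar rescales the admissible $m_i$ by the same scalar, so I may assume each $\ab_i$ is primitive in the dual lattice $\mathscr{L}^*$; moreover the slice with $m_i=0$ is automatically nonempty, since the facet $\{\ab_i\cdot\xb=b_i\}$ is the convex hull of the lattice points of $\mathscr{P}$ lying on it. Thus the condition says exactly that each $\ab_i$ takes at most two values on $\mathscr{P}\cap\mathscr{L}$, namely $b_i$ and one further value. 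Second, I would establish a small ``no skipped level'' lemma: if a primitive $\ab_i\in\mathscr{L}^*$ takes exactly the values $b_i$ and $b_i+c$ on $\mathscr{P}\cap\mathscr{L}$, then $c=1$. Indeed $\mathscr{L}$ is generated as an affine lattice by $\mathscr{P}\cap\ZZ^n=\mathscr{P}\cap\mathscr{L}$, all of whose points lie at $\ab_i$-level $b_i$ or $b_i+c$; hence $\ab_i$ maps the difference lattice $\mathscr{L}-\mathscr{L}$ into $c\ZZ$, while primitivity forces $\ab_i(\mathscr{L}-\mathscr{L})=\ZZ$, so $c=1$. Consequently the condition is equivalent to $\ab_i(\mathscr{P}\cap\mathscr{L})\subseteq\{b_i,b_i+1\}$ for every facet, i.e. to $\mathscr{P}$ being $2$-level.

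For the direction ``$\mathscr{P}$ not compressed $\Rightarrow$ the condition fails'' I would prove the contrapositive. If the condition fails at a facet $F_i=\{\ab_i\cdot\xb=b_i\}$, then $\ab_i$ takes at least three values on $\mathscr{P}\cap\mathscr{L}$, and since $b_i$ is the smallest of them (attained on $F_i$), some lattice point $v$ of $\mathscr{P}$ lies at lattice height $\ge 2$ above $\mathrm{aff}(F_i)$, in particular $v\notin F_i$. Any pulling triangulation in which $v$ is pulled first contains, for each cell $\tau$ of the induced pulling triangulation of the facet $F_i$, the pyramid $\mathrm{conv}(\{v\}\cup\tau)$; by multiplicativity of normalized volume for a pyramid this cell has normalized volume $\ge 2\cdot 1=2$, so this pulling triangulation is not unimodular and $\mathscr{P}$ is not compressed.

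For the converse, assume $\mathscr{P}$ is $2$-level and let $\sigma$ be a cell of an arbitrary pulling triangulation, $d=\dim\mathscr{P}$. The recursive construction of pulling triangulations equips $\sigma$ with a flag of faces $\mathscr{P}=F^{(0)}\supsetneq F^{(1)}\supsetneq\cdots\supsetneq F^{(d)}$ and pulled lattice points $v_k\in F^{(k-1)}\setminus F^{(k)}$, where $F^{(k)}=F^{(k-1)}\cap G_k$ for some facet $G_k$ of $\mathscr{P}$; iterating the pyramid formula yields $\mathrm{nvol}_{\mathscr{L}}(\sigma)=\prod_{k=1}^{d}h_k$, with $h_k$ the lattice height of $v_k$ above $\mathrm{aff}(F^{(k)})$ computed in the lattice $\mathscr{M}_{k-1}:=\mathscr{L}\cap\mathrm{aff}(F^{(k-1)})$. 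I claim every $h_k=1$. Let $\ab$ be a primitive normal of $G_k$ in $\mathscr{L}^*$. By $2$-levelness $\ab$ takes only the values $b_{G_k}$ and $b_{G_k}+1$ on $\mathscr{P}\cap\mathscr{L}$, and both occur already on $\mathscr{M}_{k-1}$ (on $F^{(k)}$ and at $v_k$, respectively); since these two values are consecutive integers, $\ab$ restricts to a primitive functional on $\mathscr{M}_{k-1}$. As $v_k\notin F^{(k)}$ we get $\ab\cdot v_k=b_{G_k}+1$, and primitivity of $\ab|_{\mathscr{M}_{k-1}}$ turns this gap of $1$ into lattice height $1$. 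Hence $\mathrm{nvol}_{\mathscr{L}}(\sigma)=1$; every cell of every pulling triangulation is unimodular, so $\mathscr{P}$ is compressed.

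The step I expect to be the main obstacle is the lattice bookkeeping: ensuring that ``unimodular'' and every lattice height are consistently measured with respect to $\mathscr{L}$ and its restrictions $\mathscr{M}_k$ --- not the ambient $\ZZ^n$, and not the (possibly smaller) lattices generated by individual faces --- and exploiting the seemingly harmless fact that a primitive functional on $\mathscr{L}$ restricts to a primitive functional on each $\mathscr{M}_k$, which is precisely what the ``no skipped level'' lemma together with $2$-levelness provides. The facts I would use without proof are the recursive description of pulling triangulations, the pyramid formula $\mathrm{nvol}(\mathrm{conv}(\{v\}\cup\tau))=(\text{lattice height of }v)\cdot\mathrm{nvol}(\tau)$, and the equivalence between compressedness and unimodularity of all pulling triangulations quoted from \cite[Corollary~8.9]{Stu}.
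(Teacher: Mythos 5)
The paper does not prove this proposition: it is quoted verbatim from Sullivant \cite{Sul} (with a pointer to Haase's dissertation), so there is no in-paper argument to compare yours against. Your reconstruction is correct and is essentially the standard proof of Sullivant's Theorem~2.4: reduce the stated condition to $2$-levelness via the ``no skipped level'' normalization, then use the recursive pyramid decomposition of pulling triangulations to show that the normalized volume of each cell is the product of lattice heights, each of which is forced to be $1$ by $2$-levelness (and conversely is at least $2$ for some cell when a facet functional takes three values). The one point that genuinely requires the care you flag is the lattice bookkeeping: primitivity of $\ab_i$ and all heights must be taken with respect to the difference lattice of $\mathscr{L}$ and its intersections with the affine hulls $\mathrm{aff}(F^{(k)})$ (not the possibly smaller lattices generated by the faces' own lattice points), and your observation that $2$-levelness makes each restricted functional again primitive is exactly what closes that gap. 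I see no error; note only that, as stated, the set $\{\xb\in\mathscr{L}:\ab_i\cdot\xb=b_i+m_i\}\cap\mathscr{P}$ involves $\mathscr{L}\cap\mathscr{P}=\ZZ^n\cap\mathscr{P}$, so your reading of the hypothesis as ``at most two values of $\ab_i$ on the lattice points of $\mathscr{P}$'' is the intended one.
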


Let $G'$ be an induced subgraph of a graph $G$.
Then $\PMSGd$ is a face of $\PMSG$.
It is known that every face of a compressed polytope is compressed.
Hence, we have the following immediately.

\begin{lemma}
\label{inducedcompressed}
Let $G'$ be a connected graph such that $\PMSGd$ is not compressed.
If a graph $G$ has $G'$ as an induced subgraph, then $\PMSG$ is not compressed.
\end{lemma}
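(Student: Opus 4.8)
The statement to prove is Lemma~\ref{inducedcompressed}: if $G'$ is a connected graph such that $\PMSGd$ is not compressed, and $G$ contains $G'$ as an induced subgraph, then $\PMSG$ is not compressed.

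The plan is to reduce everything to two general facts: first, that $\PMSGd$ is a face of $\PMSG$ whenever $G'$ is an induced subgraph of $G$; and second, that faces of compressed polytopes are compressed (a standard fact, already cited in the paragraph preceding the lemma, following from the characterization of compressedness in terms of pulling triangulations being unimodular, or equivalently from the 2-level characterization). Granting these two facts, the lemma is immediate by contraposition: if $\PMSG$ were compressed, then its face $\PMSGd$ would be compressed, contradicting the hypothesis.

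So the real content is verifying that $\PMSGd$ sits inside $\PMSG$ as a face. I would argue as follows. Let $V' \subset V$ be the vertex set of the induced subgraph $G' = G[V']$, and let $W = V \setminus V'$. The key observation is that if $S \in \mathscr{W}(G)$ induces a perfectly matchable subgraph of $G$ and $S \subset V'$, then the matching witnessing this uses only edges inside $G[V'] = G'$ (since $G'$ is \emph{induced}), so $S \in \mathscr{W}(G')$; conversely $\mathscr{W}(G') \subset \mathscr{W}(G)$ trivially. Thus $\rho(S)$ for $S \in \mathscr{W}(G')$ are exactly the vertices $\rho(S)$ of $\PMSG$ supported on coordinates in $V'$. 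I would then exhibit a supporting hyperplane: consider the linear functional $\ell(x) = \sum_{i \in W} x_i$ (equivalently $x(W)$). On $\PMSG$ we have $\ell \ge 0$, since every vertex $\rho(S)$ has nonnegative coordinates; the face $\{x \in \PMSG : \ell(x) = 0\}$ is then the convex hull of those vertices $\rho(S)$ with $S \cap W = \emptyset$, i.e.\ with $S \subset V'$, which by the observation above is precisely $\PMSGd$ (viewed inside the coordinate subspace $\RR^{V'} \subset \RR^V$). Hence $\PMSGd$ is a face of $\PMSG$.

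The main obstacle, such as it is, is the bookkeeping that a perfect matching of $G[S]$ for $S \subset V'$ uses only edges of $G'$ — this is exactly where ``induced'' (as opposed to merely ``subgraph'') is essential, and it should be stated explicitly. Everything else is routine: the nonnegativity inequalities $x_i \ge 0$ are part of the facet description (they appear in Propositions~\ref{inequalityANY} and~\ref{bipartitefacetfor}), so $\ell \ge 0$ genuinely cuts out a face, and the identification of that face with $\PMSGd$ is just the vertex description of a face of a polytope. Finally I would invoke the already-cited fact that every face of a compressed polytope is compressed to close the argument by contraposition.
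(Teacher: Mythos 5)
Your proposal is correct and follows exactly the paper's route: the paper simply asserts in the paragraph before the lemma that $\PMSGd$ is a face of $\PMSG$ and that faces of compressed polytopes are compressed, then concludes by contraposition. You supply the details the paper omits (that inducedness gives $\mathscr{W}(G')=\{S\in\mathscr{W}(G):S\subset V'\}$ and that the functional $x(V\setminus V')\ge 0$ cuts out $\PMSGd$ as a face), and these details are accurate.
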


The following fact is known in graph theory.

\begin{prop}[\cite{Maf, Tro}]
\label{lineperfect}
Let $G$ be a graph. Then the following conditions are equivalent{\rm :}
\begin{itemize}
    \item[\rm (i)] The line graph $L(G)$ of $G$ is perfect{\rm ;} 
    \item[\rm (ii)] $G$ has no odd cycle of length $\ge 5$ as a subgraph{\rm ;}
    \item[\rm (iii)] Each block of $G$ is either a bipartite graph, $K_4$, or $K_{1,1,n}$.
\end{itemize}

\end{prop}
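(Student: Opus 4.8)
The plan is to establish the cycle $(\mathrm{i})\Rightarrow(\mathrm{ii})\Rightarrow(\mathrm{iii})\Rightarrow(\mathrm{i})$, treating $(\mathrm{ii})\Leftrightarrow(\mathrm{iii})$ as a self-contained piece of graph theory and reducing the line-graph conditions to it, so that neither the strong perfect graph theorem nor Beineke's characterization of line graphs is needed. (Alternatively one could derive $(\mathrm{ii})\Rightarrow(\mathrm{i})$ directly from the strong perfect graph theorem, using that an induced odd hole of length $\ge 5$ in $L(G)$ recovers an odd cycle of the same length in $G$ and that $\overline{C_{2k+1}}$ with $k\ge 3$ is never a line graph; but the block route seems cleaner.)

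For $(\mathrm{i})\Rightarrow(\mathrm{ii})$ I argue by contraposition. If $G$ contains $v_1v_2\cdots v_{2k+1}v_1$ with $k\ge 2$ as a subgraph, put $e_i=\{v_i,v_{i+1}\}$ (indices mod $2k+1$). In a cycle every vertex has degree two, so $e_i\cap e_j=\emptyset$ whenever $i$ and $j$ are non-consecutive; hence $e_1,\dots,e_{2k+1}$ induce exactly a $C_{2k+1}$ in $L(G)$, and an odd hole of length $\ge 5$ is itself imperfect, so $L(G)$ is not perfect.

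For $(\mathrm{ii})\Rightarrow(\mathrm{iii})$: every cycle of $G$ lies inside a block, so $G$ has no $C_{2k+1}$ with $k\ge 2$ if and only if no block does, and it suffices to classify the $2$-connected graphs (and $K_2$) with this property. Such a graph $B$ that is not bipartite has an odd cycle; its shortest odd cycle is chordless, hence a triangle $T=\{a,b,c\}$. For $v\notin T$, $2$-connectivity (the fan lemma) gives two internally disjoint $v$--$T$ paths $P_1,P_2$ meeting $T$ only at their endpoints, say $a$ and $b$; then $P_1\cup P_2\cup\{ab\}$ and $P_1\cup P_2\cup\{ac,cb\}$ are cycles of consecutive lengths $\ell+1,\ell+2$ where $\ell=|P_1|+|P_2|\ge 2$, so the odd one must have length $3$, forcing $\ell=2$, i.e.\ $v$ is adjacent to both $a$ and $b$. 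A short case analysis finishes it: if some vertex is adjacent to all of $T$ one checks that any further vertex produces a $C_5$, so $B=K_4$; otherwise every vertex outside $T$ sees exactly two vertices of $T$, two vertices seeing different pairs yield a $C_5$, and (using $c$) any edge among the vertices outside $T$ also yields a $C_5$, so $B$ is the complete tripartite graph $K_{1,1,n}$. I expect this structural step to be the main obstacle, since it is where the case bookkeeping must be done with care.

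For $(\mathrm{iii})\Rightarrow(\mathrm{i})$: $(\mathrm{iii})$ trivially implies $(\mathrm{ii})$, so by the block reduction it suffices to verify that $L(B)$ is perfect for each block type, together with the fact that perfection of $L(G)$ follows from perfection of the $L(B_i)$ over the blocks $B_i$ of $G$ — which holds because the edges incident to a cut vertex of $G$ form a clique cutset of $L(G)$, and perfection is inherited through clique-cutset decompositions. Now $L$ of a bipartite graph is perfect by K\"onig's theorem; $L(K_4)$ is the complete tripartite graph $K_{2,2,2}$, hence perfect; and in $L(K_{1,1,n})$ the edge $\{a,b\}$ is a universal vertex whose deletion leaves two cliques joined by a perfect matching, i.e.\ the complement of a bipartite graph, which is perfect, and adding back a universal vertex preserves perfection. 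This completes the cycle of implications.
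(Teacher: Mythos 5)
Your argument is correct, but note that the paper itself offers no proof of this proposition: it is quoted as a known fact with citations to Trotter (for the equivalence of (i) and (ii)) and Maffray (for the block description (iii)), so the comparison here is with the literature rather than with anything in the text. Your cycle of implications is the standard self-contained reconstruction, and all three legs are sound: the observation that a (not necessarily induced) odd cycle of length $2k+1\ge 5$ in $G$ yields an induced odd hole $C_{2k+1}$ in $L(G)$ gives (i)$\Rightarrow$(ii); the fan-lemma parity argument forcing every vertex outside a fixed triangle $T$ of a non-bipartite $2$-connected block to be adjacent to at least two vertices of $T$, followed by your case split, correctly yields $K_4$ or $K_{1,1,n}$; and the perfection of $L$ of a bipartite graph (K\"onig), of $L(K_4)=K_{2,2,2}$, and of $L(K_{1,1,n})$ (universal vertex plus complement of a bipartite graph) is verified correctly. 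The one place to tighten is the block reduction in (iii)$\Rightarrow$(i): splitting $L(G)$ along the clique cutset $\delta(v)$ at a cut vertex $v$ does not produce the graphs $L(B_i)$ themselves, because each piece of the decomposition must contain all of $\delta(v)$; the pieces are line graphs of the two sides of $G$ at $v$ with extra pendant edges attached at $v$. Those extra edges are simplicial vertices of the corresponding line graphs (their neighbourhoods are the cliques $\delta_{B_i}(v)$), and adding a simplicial vertex preserves perfection, so an induction on the number of blocks closes the argument. With that sentence added, your proof is a complete and elementary substitute for the results the paper cites from Trotter and Maffray.
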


It is known that $\stab (G)$ is compressed
if and only if $G$ is perfect.

\begin{lemma}
\label{nooddcycle}
Let $G$ be a connected graph.
If $\PMSG$ is compressed, then 
$L(G)$ is perfect and hence $\stab{(L(G))}$ is compressed.
\end{lemma}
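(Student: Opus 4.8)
The plan is to prove the contrapositive of the first implication: if $L(G)$ is not perfect, then $\PMSG$ is not compressed. By Proposition~\ref{lineperfect}, $L(G)$ failing to be perfect is equivalent to $G$ containing an odd cycle $C_{2k+1}$ of length $\ge 5$ as a subgraph. The strategy is to locate a small ``bad'' induced subgraph $G'$ inside $G$ whose perfectly matchable subgraph polytope $\PMSGd$ is already not compressed, and then invoke Lemma~\ref{inducedcompressed} to conclude that $\PMSG$ is not compressed either. The natural candidate for $G'$ is the odd cycle $C_{2k+1}$ itself (with $k\ge 2$); note that although $C_{2k+1}$ sits in $G$ only as a subgraph, not necessarily as an induced subgraph, one can pass to the induced subgraph on $V(C_{2k+1})$, which contains $C_{2k+1}$ and is connected, and it suffices to show that $\mathcal{P}$ of \emph{that} graph is not compressed — but the cleanest route is to show directly that $\PMSG[C_{2k+1}]$ is not compressed and handle the ``subgraph vs.\ induced subgraph'' gap by a short additional argument (e.g.\ chords only add facets/constraints and one checks the same violation persists, or one reduces to the smallest such cycle).

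First I would compute the facet description of $\PMSG[C_{2k+1}]$ using Proposition~\ref{inequalityANY} and Proposition~\ref{nonbipartitefacet}: the relevant facet-inducing inequalities coming from $S\in\mathscr{T}$ with critical components. For an odd cycle, the whole vertex set $S=V(C_{2k+1})$ is itself critical (odd cycles are hypomatchable), giving the inequality $x(V)\le |V|-1 = 2k$, i.e.\ $\sum_i x_i \le 2k$; singletons $S=\{v\}$ give $x(v)\ge 0$ and the degree-$2$ condition gives $x(v)\le 1$. Then I would apply Sullivant's criterion, Proposition~\ref{sullivantcompressed}: for each facet one must check that only one nonzero translate of the supporting hyperplane (by a lattice vector of $\mathscr{L}$) meets the polytope. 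The lattice $\mathscr{L}$ here is full rank $2k+1$ (since $C_{2k+1}$ is nonbipartite, $\dim\PMSG[C_{2k+1}] = 2k+1$), so translates are by integer amounts. For the facet $\sum_i x_i \le 2k$, I expect to find that $\sum_i x_i$ takes the values $0$ (at $\rho(\emptyset)$) and various even values up to $2k$ on lattice points of the polytope; the point is to exhibit \emph{two} distinct nonzero values $m$ with $\{\sum x_i = 2k + m\}\cap\mathscr{L}\cap\PMSG\ne\emptyset$, equivalently two distinct values of $\sum x_i$ strictly below $2k$ that are attained — and since for $k\ge 2$ the attained values include $2k$, $2k-2$, and $2k-4 \ge 0$, there are at least two nonzero shifts, violating the criterion. (For $C_3=K_4$-free case $k=1$ this fails, consistent with $C_3$ being fine.)

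The main obstacle I anticipate is twofold. First, carefully confirming the \emph{irredundant} facet description of $\PMSG[C_{2k+1}]$ — Proposition~\ref{nonbipartitefacet} gives which $S$ yield facets, but one must make sure no redundancy spoils the application of Proposition~\ref{sullivantcompressed}, which requires the linear description to be irredundant; in particular one should identify exactly which inequality (the ``big'' one $x(V)\le |V|-\theta(S)$, or possibly one from a proper odd subpath treated as singletons/critical pieces) is the one witnessing non-compressedness. Second, bridging from ``$C_{2k+1}$ is a subgraph of $G$'' to an \emph{induced} subgraph argument: I would handle this by taking $G'$ to be the induced subgraph of $G$ on $V(C_{2k+1})$; if $G'$ has a chord, it still contains an odd cycle, and by minimality (choosing the shortest odd cycle of length $\ge 5$, or iterating) one reduces to a genuinely induced odd cycle, or one checks the non-compressedness violation on the facet $x(V(G'))\le |V(G')|-\theta$ survives the addition of chords. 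Once $\PMSGd$ is shown non-compressed, Lemma~\ref{inducedcompressed} finishes the first half. The second half, ``$\stab(L(G))$ is compressed when $L(G)$ is perfect,'' is then immediate from the cited fact that $\stab(H)$ is compressed iff $H$ is perfect.
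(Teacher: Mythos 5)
Your proposal is correct and follows essentially the same route as the paper: pass to the induced subgraph $G'=G[V(C_{2n+1})]$, use Lemma~\ref{inducedcompressed}, observe that $S=V(C_{2n+1})$ yields the facet $x(S)\le 2n$ via Proposition~\ref{nonbipartitefacet} (here $\Gamma(S)=\emptyset$ and $G'$ is critical since it has a Hamiltonian odd cycle, which also disposes of your worry about chords), and then note that $x(S)$ attains the $n+1\ge 3$ values $0,2,\dots,2n$, violating Sullivant's criterion. The second half is, as you say, immediate from Proposition~\ref{lineperfect} and the fact that $\stab(H)$ is compressed if and only if $H$ is perfect.
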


\begin{proof}
Suppose that $L(G)$ is not perfect.
From Proposition \ref{lineperfect}, $G$ has an odd cycle $C_{2n+1}$ with $n\geq2$ as a subgraph.
Let $S=V(C_{2n+1})$ and $G'=G[S]$. 
From Lemma \ref{inducedcompressed}, it is enough to prove that 
$\PMSGd$ arising from the induced subgraph $G'$ of $G$ is not compressed.
By assumption, $G'\setminus(S\cup\Gamma(S))$ is empty.
Since $C_5$ is a Hamiltonian cycle of $G'$, the graph $G'=G'[S]$ is critical.
Moreover, $\Gamma(S)=\emptyset$
and $G'[S\cup\Gamma(S)]=G'$ is connected.
Hence,
$$x(S) \le |S|-\theta(S)=2n+1-1=2n$$
is facet-inducing for $\PMSGd$ from Proposition \ref{nonbipartitefacet}.
Then there exist $n+1 \ge 3$ kinds of values for $x(S)$ 
with $x \in \PMSGd\cap \ZZ^{2n+1}$.
In fact, we have
$$
\left\{x(S): x \in \PMSGd \cap \ZZ^{2n+1} \right\} = \{0,2,\ldots,2n\}.
$$
From Proposition \ref{sullivantcompressed}, $\PMSGd$ is not compressed.
\end{proof}

\begin{lemma}
\label{completebipartite}
Let $G$ be a connected graph.
Suppose that $\PMSG$ is compressed.
Then for any even cycle $C$ in $G$ of length $2n \ge 6$,
the induced subgraph
$G[V(C)]$ is a complete bipartite graph $K_{n,n}$.
\end{lemma}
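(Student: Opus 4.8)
The plan is to reduce $G$ near $C$ to a bipartite induced subgraph and then read off its facet structure. Since $C$ is a cycle it is $2$-connected, hence contained in a single block $B$ of $G$. As $\PMSG$ is compressed, Lemma~\ref{nooddcycle} and Proposition~\ref{lineperfect} give that $B$ is bipartite, $K_4$, or $K_{1,1,m}$; since $K_4$ has only four vertices and every cycle of $K_{1,1,m}$ has length at most $4$ (a vertex of the $m$-element independent part has both of its cycle-neighbours among the two singleton-part vertices, which can jointly serve at most two such vertices), the block $B$ containing the cycle $C$ of length $2n\ge 6$ must be bipartite. Since distinct blocks share at most one vertex, every edge of $G$ between two vertices of $V(C)$ already lies in $B$, so $H:=G[V(C)]=B[V(C)]$ is a connected bipartite graph with parts $X$ and $Y$, where $|X|=|Y|=n$ and $X\cup Y$ alternates along $C$; in particular $H$ is $2$-connected and $C$ is a Hamiltonian cycle of $H$. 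As $H$ is an induced subgraph of $G$, $\mathcal{P}_H$ is a face of the compressed polytope $\PMSG$, hence compressed. It thus suffices to prove: if $H$ is a connected bipartite graph with parts $X,Y$ of common size $n$, possessing a Hamiltonian cycle, and $\mathcal{P}_H$ is compressed, then $H=K_{n,n}$.

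To use compressedness I would analyse the facets of $\mathcal{P}_H$ via Proposition~\ref{sullivantcompressed}. By Propositions~\ref{bipartitefacetfor} and~\ref{bipartitefacet} (and since $H$ is $2$-connected with all degrees at least $2$) the facet inequalities are $0\le x(v)$ and $x(v)\le 1$ for $v\in X\cup Y$, together with $x(S)-x(\Gamma(S))\le 0$ for those $\emptyset\ne S\subsetneq X$ with $H[S\cup\Gamma(S)]$ and $H[(X\setminus S)\cup(Y\setminus\Gamma(S))]$ both connected; the inequalities~(\ref{zeroichi}) are harmless in Proposition~\ref{sullivantcompressed} because each $x(v)$ takes only the values $0$ and $1$ on $\mathcal{P}_H$. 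For a matching $M$ of $H$ put $W=V(M)\in\mathscr{W}(H)$: since $S$ is adjacent only to vertices of $\Gamma(S)$, every edge of $M$ meeting $S$ has its other end in $\Gamma(S)$, so $x(\Gamma(S))-x(S)$ evaluated at $\rho(W)$ equals the number of edges of $M$ joining $\Gamma(S)$ to $X\setminus S$. As $M$ varies this number runs over $0,1,\dots,\nu$, where $\nu$ is the maximum size of a matching of $H[(X\setminus S)\cup\Gamma(S)]$, so Proposition~\ref{sullivantcompressed} forces $\nu\le 1$ for every such $S$.

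Next I would rule out ``small'' facet-inducing sets using $2$-connectedness. Let $\emptyset\ne S\subsetneq X$ be facet-inducing with $\Gamma(S)\ne Y$, so $S':=Y\setminus\Gamma(S)\ne\emptyset$, and suppose $|X\setminus S|\ge 2$. As $H$ is bipartite, $S$ and $S'$ are not adjacent, so the only edges of $H$ between $S\cup\Gamma(S)$ and $(X\setminus S)\cup S'$ run between $X\setminus S$ and $\Gamma(S)$. If there were no such edge, $H$ would be disconnected; so there is one, and since $\nu\le 1$ by the previous paragraph all these edges share a common vertex $v$. Deleting $v$ then splits $H$ into the part induced on $(S\cup\Gamma(S))\setminus\{v\}$, which contains $S\ne\emptyset$, and the part induced on $((X\setminus S)\cup S')\setminus\{v\}$, which contains $S'\ne\emptyset$, with no edges between them; so $v$ is a cut vertex, contradicting $2$-connectedness. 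Hence every facet-inducing $S\subsetneq X$ has $\Gamma(S)=Y$ or $|X\setminus S|\le 1$; and if $\Gamma(S)=Y$ then the connectedness of $H[X\setminus S]=H[(X\setminus S)\cup(Y\setminus\Gamma(S))]$ together with the independence of $X\setminus S\subseteq X$ again gives $|X\setminus S|\le 1$. So each facet-inducing $S$ equals $X\setminus\{v\}$ for some $v\in X$; as $\Gamma(X\setminus\{v\})=Y$ (each $y\in Y$ has along $C$ a neighbour other than $v$), on the affine hull $\{x(X)=x(Y)\}$ of $\mathcal{P}_H$ the inequality $x(S)-x(\Gamma(S))\le 0$ is just $x(v)\ge 0$, hence redundant.

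Consequently $\mathcal{P}_H=\{x\in\RR^{X\cup Y}: 0\le x(v)\le 1 \text{ for all } v,\ x(X)=x(Y)\}$. Finally, if $H\ne K_{n,n}$ there are $x_0\in X$ and $y_0\in Y$ with $\{x_0,y_0\}\notin E(H)$; then $\rho(\{x_0,y_0\})$ lies in $[0,1]^{X\cup Y}$ and has equal coordinate-sums over $X$ and over $Y$, hence lies in $\mathcal{P}_H$. Being a $(0,1)$-point of the $(0,1)$-polytope $\mathcal{P}_H$, it is a vertex, so $\rho(\{x_0,y_0\})=\rho(W)$ for some $W\in\mathscr{W}(H)$; then $\{x_0,y_0\}\in\mathscr{W}(H)$, i.e.\ $H[\{x_0,y_0\}]$ has a perfect matching and $\{x_0,y_0\}\in E(H)$, a contradiction. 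Thus $H=K_{n,n}$. I expect the third paragraph, the cut-vertex argument, to be the main obstacle: one must split into the cases ``no connecting edge'', ``connecting edges sharing a vertex in $\Gamma(S)$'', and ``connecting edges sharing a vertex in $X\setminus S$'', and check each time that both parts left after the deletion are nonempty, using only $S\ne\emptyset$, $X\setminus S\ne\emptyset$, $S'\ne\emptyset$; the matching count and the identification $\mathcal{P}_H\cap\{0,1\}^{X\cup Y}=\{\rho(W):W\in\mathscr{W}(H)\}$ in the earlier paragraphs are routine but need care.
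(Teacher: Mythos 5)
Your proof is correct, and it takes a genuinely different route from the paper's. The paper argues by induction on the cycle length: for a missing chord $\{v_3,v_{2k}\}$ it first uses the induction hypothesis on shorter even cycles to reduce to the case where $v_3$ has no chord at all, and then exhibits the three values $0,-1,-2$ of $x(\{v_3\})-x(\Gamma(\{v_3\}))$ on lattice points, contradicting Proposition~\ref{sullivantcompressed}. You instead avoid induction entirely: after the same reduction to a $2$-connected bipartite $H=G[V(C)]$ with Hamiltonian cycle $C$, you show via Sullivant's criterion that every facet-inducing inequality $x(S)-x(\Gamma(S))\le 0$ forces the maximum matching of $H[(X\setminus S)\cup\Gamma(S)]$ to be at most $1$, and the ensuing cut-vertex argument pins down $S=X\setminus\{v\}$, so that $\Pc_H$ is the full slice $\{0\le x\le 1,\ x(X)=x(Y)\}$ of the cube; every pair $\{x_0,y_0\}$ with $x_0\in X$, $y_0\in Y$ is then a vertex of $\Pc_H$, hence an edge of $H$. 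Your version is longer and leans more heavily on the Balas--Pulleyblank facet characterization (Proposition~\ref{bipartitefacet}(c)) and on $2$-connectedness, but it buys a complete facet description of $\Pc_{G[V(C)]}$ as a byproduct and replaces the induction by a single structural argument; the paper's version is shorter and only needs the single witness set $S=\{v_3\}$ at each stage. Both ultimately rest on the same two ingredients, Sullivant's criterion and the facet description of $\PMSG$ for bipartite graphs. The only points worth tightening in your write-up are minor: the hypothesis $|X\setminus S|\ge 2$ in your third paragraph is never actually used (both components left after deleting the common vertex $v$ are nonempty simply because $v\notin S$ and $v\notin S'$), and one should note explicitly that pairwise-intersecting edges share a common vertex because $H$ is triangle-free.
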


\begin{proof}
Suppose that $\PMSG$ is compressed.
Let $C=(v_1,\dots,v_{2n})$ be an even cycle in $G$ of length $2n \ge 6$,
and let $G' = G[V(C)]$.
We prove the statement by induction on $n$.

From Lemma \ref{inducedcompressed}, $\PMSGd$ is compressed.
Note that $G'$ is a subgraph of a block of $G$.
From Proposition \ref{lineperfect} and Lemma \ref{nooddcycle}, $G'$ is a bipartite graph since neither $K_4$ nor $K_{1,1,n}$ has  an even cycle of length $\geq 6$.
Suppose that $G'$ is not a complete bipartite graph.
\bigskip

\noindent{\bf Case 1.} ($n=3$)
Suppose that $\{v_3,v_6\}$ is not an edge of $G'$. Then, for $S=\{v_3\}$, 
both $G[S\cup\Gamma(S)]=G[\{v_2,v_3,v_4\}]$ and $G[(V_1\setminus S)\cup(V_2\setminus \Gamma(S))=G[\{v_1,v_5,v_6\}]$ are connected. 
However, we have
$$
x(S)-x(\Gamma(S))
=
\left\{
\begin{array}{cl}
0     & \mbox{for } x =\rho(\emptyset) = {\bf 0}, \\
-1     & \mbox{for }  x = \rho(\{v_1, v_2\}),\\
-2     & \mbox{for } x = \rho(\{v_1,v_2, v_4,v_5\}).
\end{array}
\right.
$$
Hence, $\PMSGd$ is not compressed, a contradiction.
It follows that $C_6$ has all the chords $\{v_1,v_4\}$, $\{v_2,v_5\}$, and $\{v_3,v_6\}$.
Hence, $G'$ is a complete bipartite graph $K_{3,3}$.
\bigskip

\noindent{\bf Case 2.}
($n\ge 4$ and suppose that the statement is true for any even cycle
of length $\le 2n-2$)

Suppose that $\{v_3, v_{2k}\}$ for some $3 \le k \le n$ is not an edge of $G'$.
If $\{v_3, v_{2k'}\}$ is an edge of $G'$ for some $k'$,
then $v_3, v_{2k}$ are contained in an even cycle of length $2m$
with $6 \le 2m \le 2n-2$.
By the hypothesis of induction,  $\{v_3, v_{2k}\}$ is an edge of $G'$,
a contradiction. 
Thus, for any  $3 \le k' \le n$, $\{v_3, v_{2k'}\}$ is not an edge of $G'$.
Let $S=\{v_3\}$. 
Then both $G[S\cup\Gamma(S)]=G[v_2,v_3,v_4]$ and $G[(V_1\setminus S)\cup(V_2\setminus \Gamma(S))]=G[v_1,v_5,v_6, \dots, v_{2n}]$ are connected. 
However, we have
$$
x(S)-x(\Gamma(S))
=
\left\{
\begin{array}{cl}
0     & \mbox{for } x =\rho(\emptyset) = {\bf 0}, \\
-1     & \mbox{for }  x = \rho(\{v_1, v_2\}),\\
-2     & \mbox{for } x = \rho(\{v_1,v_2, v_4,v_5\}).
\end{array}
\right.
$$
Hence, $\PMSGd$ is not compressed, a contradiction.
Thus, $G'$ is a complete bipartite graph $K_{n,n}$.
\end{proof}

\begin{lemma}
\label{twotriangles}
Let $G$ be a connected graph.
If $\PMSG$ is compressed, then any two triangles of $G$
have a common edge.
\end{lemma}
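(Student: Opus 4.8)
The plan is to argue by contradiction: suppose $\PMSG$ is compressed but there exist two triangles $T_1$ and $T_2$ of $G$ that share no edge. By Lemma~\ref{inducedcompressed}, it suffices to pass to the induced subgraph $G' = G[V(T_1) \cup V(T_2)]$ and show that $\PMSGd$ is not compressed. First I would split into cases according to $|V(T_1) \cap V(T_2)|$, which is $0$ or $1$ (it cannot be $2$, else they would share an edge, and not $3$, else $G'$ contains $K_4$ but then the two triangles of $K_4$ sharing no edge still need to be handled — actually two triangles in $K_4$ always share an edge, so this case does not arise). If they share exactly one vertex, $G'$ contains a ``bowtie'' on $5$ vertices; if they share no vertex, $G'$ contains two disjoint triangles on $6$ vertices, possibly with extra edges between them.

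The key step is to exhibit, in each case, a facet-inducing inequality of $\PMSGd$ whose associated linear functional takes at least three distinct values on $\PMSGd \cap \ZZ^{|V(G')|}$, which by Proposition~\ref{sullivantcompressed} contradicts compressedness. For the bowtie $B$ with shared vertex $w$ and triangles $\{w,a,b\}$, $\{w,c,d\}$: I would take $S = V(B)$, a set of $5$ vertices. Since $B$ is critical (removing any vertex leaves a perfect matching: removing $w$ leaves two disjoint edges; removing $a$ leaves the triangle $\{w,c,d\}$ plus vertex $b$ — wait, that has an odd component, so one must be careful). Here the hard part will be checking that the chosen $S$ genuinely satisfies the facet conditions of Proposition~\ref{nonbipartitefacet}, in particular that every component of $G'[S]$ is critical and that condition (iii) (connectivity after deleting edges within $\Gamma(S)$) holds. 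If $S = V(G')$ then $\Gamma(S) = \emptyset$, so (ii) and (iii) are automatic, and I only need $G'[S]$ critical; but the bowtie is \emph{not} critical at $a$, since $B \setminus \{a\}$ is a triangle plus a pendant vertex, which has no perfect matching. So instead I would choose $S$ more cleverly, or use inequality~(\ref{facet1}) for a $1$-element set $S = \{v\}$ of the right vertex together with the fact that then $G'[S\cup\Gamma(S)]$ must be handled via the conditions, and compute $x(S)-x(\Gamma(S))$ on a few explicit lattice points of $\mathscr{W}(G')$ to get three values.

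More concretely, in the bowtie case I expect that taking $S=\{a\}$ (a degree-$2$ vertex), the inequality $x(\{a\}) - x(\{w,b\}) \le 0$ is facet-inducing, and evaluating on $\rho(\emptyset)=\mathbf 0$, on $\rho(\{a,w\})$ giving $1-1=0$, on $\rho(\{a,b\})$ giving $1-1=0$, and on $\rho(\{w,c,\dots\})$-type sets giving $-1$ or $-2$, yields the needed spread of values; here the critical check is that some perfectly matchable subgraph of $B$ contains $b$ and $w$ but not $a$, namely $\{w,b,c,d\}$ matched as $\{w,c\},\{b,d\}$ — but $\{b,d\}$ is not an edge of $B$, so this fails, and I would need to add the observation that $G'$, being induced from $G$ which contains odd cycles only in controlled ways (Lemma~\ref{nooddcycle} and Lemma~\ref{completebipartite} are already available), forces more structure. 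The main obstacle is thus selecting the set $S$ and the explicit lattice points correctly so that the inequality is provably facet-inducing and the functional provably multivalued; once the right $S$ is pinned down, the verification is a short finite computation. Finally, for the two-disjoint-triangles case, I would similarly take $S$ to be a single vertex of one triangle, or the full vertex set of one triangle (which \emph{is} critical as $C_3$), apply Proposition~\ref{nonbipartitefacet}, and check that the remaining component is nonbipartite (it contains the other triangle), producing three values of $x(S)-x(\Gamma(S))$ analogously to Case~1 of Lemma~\ref{completebipartite}.
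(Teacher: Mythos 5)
Your overall strategy---pass to an induced subgraph, exhibit a facet-inducing inequality via Proposition~\ref{nonbipartitefacet}, and show that the corresponding linear functional takes at least three values on lattice points, contradicting Proposition~\ref{sullivantcompressed}---is the same as the paper's, but both of your case analyses have genuine gaps. In the vertex-disjoint case, your choice of induced subgraph $G'=G[V(T_1)\cup V(T_2)]$ cannot work when $G$ has no edge between the two triangles: then $G'$ is the disjoint union of two triangles, $\PMSGd$ is the product of two copies of $\Pc_{C_3}$ (each a simplex), and this product \emph{is} compressed, so no contradiction can be extracted from $G'$ at all. Your fallback choices of $S$ confirm this: for $S$ a single vertex $v$ of $T_1$ the functional $x(S)-x(\Gamma(S))$ attains only the two values $0$ and $-2$, and for $S=V(T_1)$ the functional $x(S)$ attains only $0$ and $2$. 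The paper instead works in the induced subgraph on $V(T_1)\cup V(T_2)\cup V(P)$ for a \emph{shortest} path $P$ joining the two triangles, and uses the minimality of $P$ together with the absence of odd cycles of length $\ge 5$ (Lemma~\ref{nooddcycle}) to control the adjacencies; the three values are then produced using an endpoint of $P$. This connecting-path-plus-minimality device is the essential missing idea, not a routine detail.

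In the bowtie case you explicitly leave the choice of $S$ unresolved, and the candidate you do write down, $S=\{a\}$ for a degree-two vertex $a$, is \emph{not} facet-inducing: $G'\setminus(S\cup\Gamma(S))$ is the single edge $\{c,d\}$, which is bipartite, so condition (ii) of Proposition~\ref{nonbipartitefacet} fails, and Sullivant's criterion says nothing about non-facet inequalities. Incidentally, your reason for abandoning $S=V(B)$ is incorrect: the bowtie \emph{is} critical, since deleting $a$ leaves the paw on $\{w,b,c,d\}$, which has the perfect matching $\{\{w,b\},\{c,d\}\}$; so $x(V(B))\le 4$ is in fact facet-inducing and takes the three values $0,2,4$, which would have finished this case. (The paper's own choice is $S=\{w\}$, the shared vertex, for which $\Gamma(S)$ consists of the four remaining vertices and the functional takes the values $0,-2,-4$.) As written, neither case of your argument closes.
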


\begin{proof}
Suppose that $\PMSG$ is compressed and two triangles $C$ and $C'$ of $G$
have no common edges.

\bigskip

\noindent
{\bf Case 1.} ($C$ and $C'$  have exactly one common vertex)

Let $G'=C \cup C'$, where $C=(v_1,v_2,v_3)$ and $C'=(v_3,v_4,v_5)$. 
Then the vertex set and the edge set of $G'$ are 
$$V'=\{v_1,v_2,v_3,v_4,v_5\},\ 
E'=\{\{v_1,v_2\},\{v_2,v_3\},\{v_3,v_1\},\{v_3,v_4\},\{v_4,v_5\},\{v_5,v_3\}\}.$$
Since $G$ has no odd cycle of length $\ge 5$ as a subgraph, $G'$ is an induced subgraph of $G$, and hence
$\PMSGd$ is compressed. 

We now consider the facets of $\PMSGd$.
Let $S=\{v_3\}$.
Since $|S|=1$, the set $S$ satisfies conditions (i) and (iii) in Lemma \ref{nonbipartitefacet}.
In addition, since $G' \setminus (S\cup\Gamma(S))$ is empty, $S$ satisfies condition (ii) in Lemma \ref{nonbipartitefacet}.
Thus, $S$ induces a facet of $\PMSGd$.
However, we have
$$
x(S)-x(\Gamma(S))
=
\left\{
\begin{array}{cl}
0     & \mbox{for } x =\rho(\emptyset) = {\bf 0}, \\
-2     & \mbox{for }  x = \rho(\{v_1, v_2\}),\\
-4     & \mbox{for } x = \rho(\{v_1,v_2, v_4,v_5\}).
\end{array}
\right.
$$
Hence, $\PMSGd$ is not compressed, a contradiction.
\bigskip

\noindent
{\bf Case 2.} ($C$ and $C'$ have no common vertices)

Since $G$ is connected, there exists a path $P=(v_1, p_1, \ldots, p_s = v_1')$ connecting two triangles $C=(v_1,v_2,v_3)$ and  $C'=(v'_1,v'_2,v'_3)$, where $p_1,\dots,p_{s-1} \notin V(C) \cup V(C')$.
We may assume that $s$ $(\ge 1)$ is minimal among pairs of triangles without common edges.
Let $G''$ be an induced subgraph on the vertex set $V(C)\cup V(C') \cup V(P)$.
Let $S=\{v_2\}$.  
Since $|S|=1$, the set $S$ satisfies conditions (i) and (iii) in Lemma \ref{nonbipartitefacet}.

\bigskip

\noindent
{\bf Case 2.1. } ($\Gamma(S) = \{v_1, v_3\}$)

For this case, $G'' \setminus(S\cup\Gamma(S))$ is nonbipartite. 
However, we have
$$
x(S)-x(\Gamma(S))
=
\left\{
\begin{array}{cl}
0     & \mbox{for } x =\rho(\emptyset) = {\bf 0}, \\
-1     & \mbox{for }  x = \rho(\{{v_1,  p_1}\}),\\
-2     & \mbox{for } x = \rho(\{v_1,v_3\}).
\end{array}
\right.
$$
Hence, $\Pc_{G''}$ is not compressed, a contradiction.

\bigskip

\noindent
{\bf Case 2.2. } ($\Gamma(S) \neq \{v_1, v_3\}$)

There exists an edge $\{v_2, v\}$, where $v$ $(\ne v_1)$ belongs to either 
the path $P$ or $C'$.
If $v\ne p_1$,
then an odd cycle of length $\ge 5$ is a subgraph of $G''$. 
Hence, we have $v=p_1$. 
Then $G''$ has triangles $(v_1,v_2,v)$ and $(v'_1,v'_2,v'_3)$ connected by a path $(v=p_1, \ldots, p_s = v_1')$ ($s \ge 2$).
This contradicts the hypothesis that $s$ is minimal.
\end{proof}

We are now in a position to prove a main theorem.

\begin{proof}[Proof of Theorem \ref{compressed}]
(``Only if")
Suppose that $\PMSG$ is compressed.
From Proposition \ref{lineperfect} and Lemma \ref{nooddcycle},
each block of $G$ is either a bipartite graph, $K_4$, or $K_{1,1,n}$.
By Lemma \ref{twotriangles}, 
at most one block is either $K_4$ or $K_{1,1,n}$.
It is enough to show that each bipartite block is a complete bipartite graph.
Let $B$ be a bipartite block of $G$ on the vertex set $B_1 \cup B_2$.
Suppose that $\{i,j\}$ is not an edge of $G$ for
vertices $i \in B_1$ and $j \in B_2$.
Since $B$ is 2-connected, there exist two disjoint paths
$P_1$ and $P_2$
from $i$ to $j$ in $B$.
Note that the length of each $P_i$ is at least 3.
Hence $P_1 \cup P_2$ is an even cycle of length $\ge 6$.
This contradicts to Lemma \ref{completebipartite}.
Thus, $B$ is a complete bipartite graph.

(``if")
Suppose that all blocks of $G$ are complete bipartite graphs except for at most one block, which is either $K_4$ or $K_{1,1,n}$
and $\PMSG$ is not compressed.

\bigskip

\noindent
{\bf Case 1.} ($G$ is bipartite)
There exists $\emptyset \ne S\subsetneq  V_1$ such that 
$$x(S)-x(\Gamma(S)) \le 0$$
is facet-inducing, and 
$$x(S)-x(\Gamma(S)) \le -2$$
for some $x$.
It then follows that there exist four distinct vertices $i,i'\in \Gamma (S)$ and $j, j' \in V_1-S$
such that $\{i,j\},\{i',j'\}\in E $.
By Proposition \ref{bipartitefacet},  
$G[S \cup \Gamma(S)]$ and $G[(V_1-S) \cup (V_2 -\Gamma(S))]$ are connected.
Then there exists an even cycle
$$C=(i,j, k_1,\ldots,k_{2p-1}, j',i',\ell_1,\dots,\ell_{2q-1}),$$
where 
$$k_1,k_3,\ldots,k_{2p-1}\in V_2 -\Gamma(S),\ \ \ 
k_2,k_4,\ldots,k_{2p-2}\in V_1-S,$$
$$\ell_1,\ell_3, \ldots,\ell_{2q-1}\in S, \ \ \ 
\ell_2,\ell_4, \ldots,\ell_{2q-2}\in \Gamma(S).$$
Note that the length of $C$ is at least $6$.
However, since $G[V(C)]$ is complete bipartite, $G$ has an edge $\{k_1,\ell_1\}$.
This contradicts $k_1\in V_2 -\Gamma(S)$ and
$\ell_1 \in S$.

\bigskip

\noindent
{\bf Case 2.} ($G$ is not bipartite)

There exists a subset $S\subset V$ such that
\begin{equation}
    x(S)-x(\Gamma(S)) \le |S| - \theta(S) \label{facet case2.1}
\end{equation}
is facet-inducing, and 
$$x(S)-x(\Gamma(S)) \le |S| - \theta(S) -2$$
for some $x$.
Note that $G$ has no odd cycle of length $\ge 5$ as a subgraph.

\bigskip

\noindent
{\bf Case 2.1.} ($S$ is not stable)
Since every component of $G[S]$ is critical, $G[S]$ has a triangle $T_1$.
By Lemma \ref{twotriangles}, $G\setminus(S\cup\Gamma(S))$ has no triangle as a subgraph.
Thus, $G\setminus(S\cup\Gamma(S))$ is bipartite if it is not empty.
Since (\ref{facet case2.1}) is facet-inducing,
we have $G=G[S \cup \Gamma(S)]$.
Furthermore, since $G$ has a matching satisfying $x(S)-x(\Gamma(S)) \le |S| - \theta(S)-2$,  
it follows that $G[\Gamma(S)]$ has an edge $\{i,j\}$.
Since the graph obtained from $G$ by deleting all edges with both ends in $\Gamma(S)$ is connected,
there exists a path $P$ from $i$ to $j$ 
which does not contain $\{i,j\}$.
Then $T_2 = P \cup \{i,j\}$ is an odd cycle of $G$.
Since the length of $T_2$ is 3, and $T_1$ and $T_2$ have no common edge, this is a contradiction.

\bigskip

\noindent
{\bf Case 2.2.} ($S$ is stable and $G \neq G[S \cup \Gamma(S)]$)
By Proposition \ref{nonbipartitefacet}, 
every component of $G\setminus(S\cup\Gamma(S))$ is nonbipartite, and hence has a triangle.
Since any two triangles of $G$ have a common edge,
it follows that $G\setminus(S\cup\Gamma(S))$ is connected.
If $G[\Gamma(S)]$ has an edge $\{i,j\}$, $G[S \cup \Gamma(S)]$ has a triangle as in Case 2.1.
This is a contradiction.
Hence, $\Gamma(S)$ is a stable set. 
Since $x(S)-x(\Gamma(S)) \le |S| - \theta(S)-2 = -2$
for some $x$, it follows that there exist four distinct vertices $i,i'\in \Gamma (S)$ and $j, j' \in V_1-S$
such that $\{i,j\},\{i',j'\}\in E $.
Since $G[S \cup \Gamma(S)]$ and $G[(V_1-S) \cup (V_2 -\Gamma(S))]$ are connected, the same argument as in Case 1 yields a contradiction.

\bigskip

\noindent
{\bf Case 2.3.} ($S$ is stable and $G = G[S \cup \Gamma(S)]$)
From $|S| = \theta(S)$, 
$$x(S)-x(\Gamma(S))$$
must be even number.
Since $\PMSG$ is not compressed, there exists $x$ such that 
$$x(S)-x(\Gamma(S)) \le -4.$$
Hence, $G[\Gamma(S)]$ has two edges without a common vertex and hence $G$ has two triangles without a common edge.
This is a contradiction.
\end{proof}

\begin{example}
The perfectly matchable subgraph polytope $\PMSG$ of the graph $G$ in Figure~\ref{compressednorei} is compressed.
\end{example}

\begin{figure}

\begin{tikzpicture}

\coordinate  (v1) at (1,0);
\coordinate (v2) at (1,-2);
\coordinate (v3) at (-1,-2);
\coordinate (v4) at (-1,0);
\coordinate (v5) at (3,0);
\coordinate (v6) at (5,0);
\coordinate (v7) at (2,2);
\coordinate (v8) at (4,2);
\coordinate (v9) at (-3,0);
\coordinate (v10) at (-5,0);
\coordinate (v11) at (-1,2);
\coordinate (v12) at (-3,2);
\coordinate (v13) at (-5,2);

\draw
(v1)--(v2)
(v1)--(v3)
(v1)--(v4)
(v2)--(v3)
(v2)--(v4)
(v3)--(v4)

(v1)--(v7)
(v1)--(v8)
(v5)--(v7)
(v5)--(v8)
(v6)--(v7)
(v6)--(v8)

(v4)--(v11)
(v4)--(v12)
(v4)--(v13)
(v9)--(v11)
(v9)--(v12)
(v9)--(v13)
(v10)--(v11)
(v10)--(v12)
(v10)--(v13)
;

\fill
(v1) circle (4pt)
(v2) circle (4pt)
(v3) circle (4pt)
(v4) circle (4pt)
(v5) circle (4pt)
(v6) circle (4pt)
(v7) circle (4pt)
(v8) circle (4pt)
(v9) circle (4pt)
(v10) circle (4pt)
(v11) circle (4pt)
(v12) circle (4pt)
(v13) circle (4pt)
;

\end{tikzpicture}
\caption{Graph which has $K_{3,3}$, $K_4$, and $K_{2,3}$ as blocks}
\label{compressednorei}
\end{figure}
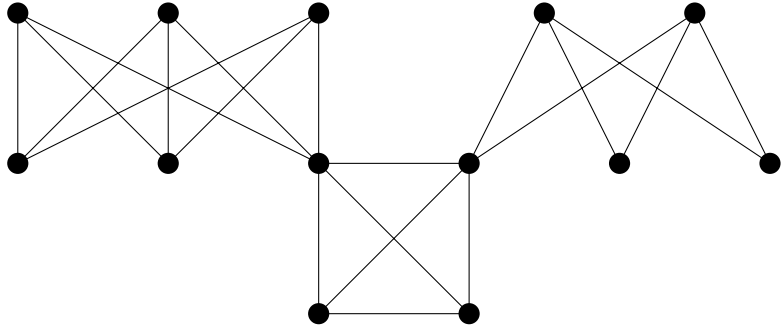

\section{Gorenstein perfectly matchable subgraph polytopes}

In this section, 
for several classes of graphs,
we give a characterization
of a graph $G$ such that $K[\PMSG]$ is Gorenstein.
If $G$ is either $K_1$ or $K_2$, then
$K[\PMSG]$ is isomorphic to a polynomial ring 
and hence $K[\PMSG]$ is Gorenstein.
Throughout this section, we may assume that $G$ has at least 
two edges.


\subsection{2-connected bipartite graphs}

Suppose that $G$ is a bipartite graph on the vertex set $V=V_1\cup V_2=\{1,\dots,n\}$, where $n\in V_2$. 
Then $\PMSG$ lies on the hyperplane $\mathcal{H}$ defined by the equation $x(V_1)=x(V_2)$. 
Let $\psi:\RR^{n-1}\rightarrow\mathcal{H}$ denote the affine map defined by setting 
$$\psi(y)=(y_1,\dots,y_{n-1},y(V_1)-y(V_2 \setminus \{n\})),$$
for each $y=(y_1,\dots, y_{n-1}) \in \RR^{n-1}$. 
Then $\psi$ is an affine isomorphism such that $\psi(\ZZ^{n-1})=\mathscr{H}\cap \ZZ^{n}$. 
Hence, $\psi^{-1}(\PMSG)\subset \RR^{n-1}$ is a lattice polytope of dimension $n-1$
which is unimodulary equivalent to $\PMSG$.

If $G$ is bipartite, we have the following criterion for $G$ whose $\PMSG$ is Gorenstein.
Note that any vertex of degree one is not a cut vertex.

\begin{prop}
\label{bipGoren}
Let $G$ be a connected bipartite graph on the vertex set $V=\{1,2,\dots,n\} = V_1 \cup V_2$.
Then $\PMSG$ is Gorenstein of index $\delta$ if and only if 
$\delta \ge 2$ and 
there exists $\alpha \in \ZZ^{n}$
such that the following hold:
\begin{itemize}
    \item[(i)]
    $\alpha(V_1) = \alpha(V_2)${\rm ;}
    \item[(ii)]
    If $v$ is not a cut vertex, then $\alpha(v) = 1${\rm ;}
    \item[(iii)]
    If $\deg(v) \ge 2$, then $\alpha(v) = \delta -1${\rm ;}
    \item[(iv)]
    If $G[S\cup\Gamma(S)]$ and $G[(V_1\setminus S)\cup(V_2\setminus \Gamma(S))]$ are connected for a subset $\emptyset \ne S\subsetneq V_1$,
    then $\alpha(S) - \alpha(\Gamma(S))=-1$.
\end{itemize}
\end{prop}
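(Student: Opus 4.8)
The plan is to derive the characterization from the reflexivity criterion recalled in the introduction, applied to the full\-dimensional model $\psi^{-1}(\PMSG)\subset\RR^{n-1}$, together with the explicit description of the facets of $\PMSG$ given in Propositions~\ref{bipartitefacetfor} and \ref{bipartitefacet}.

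First I would pass to the full\-dimensional polytope. Since $\psi$ is an affine isomorphism with $\psi(\ZZ^{n-1})=\mathscr{H}\cap\ZZ^{n}$ and $\psi(\psi^{-1}(\PMSG))=\PMSG$, the polytope $\PMSG$ is Gorenstein of index $\delta$ precisely when $\psi^{-1}(\PMSG)$ is. Writing $M=\mathscr{H}\cap\ZZ^{n}$ for the lattice of the hyperplane $\mathscr{H}$, the criterion from the introduction, transported through $\psi$, reads: $\PMSG$ is Gorenstein of index $\delta$ if and only if there is a lattice point $\alpha\in M$ in the relative interior of $\delta\PMSG$ such that $\delta\PMSG-\alpha$ is reflexive, and the latter holds exactly when, for every facet $F$ of $\PMSG$ with defining inequality $\langle a_F,x\rangle\le b_F$ in which $a_F$ is primitive as a functional on $M$, one has $\langle a_F,\alpha\rangle=\delta b_F-1$. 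The requirement $\alpha\in M$ is condition~(i), and if such $\alpha$ exists it is automatically in the relative interior of $\delta\PMSG$ (all facet inequalities then being strict) and is unique.

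The second step is to read these equations off the facet list. By Propositions~\ref{bipartitefacetfor} and \ref{bipartitefacet}, the facet\-inducing inequalities of $\PMSG$ are exactly: $-x(v)\le 0$ for each $v$ that is not a cut vertex; $x(v)\le 1$ for each $v$ with $\deg(v)\ge 2$; and $x(S)-x(\Gamma(S))\le 0$ for each $\emptyset\ne S\subsetneq V_1$ with $G[S\cup\Gamma(S)]$ and $G[(V_1\setminus S)\cup(V_2\setminus\Gamma(S))]$ both connected. For each family I would check that the relevant normal is primitive on $M$: for $x\mapsto x(S)-x(\Gamma(S))$ one exhibits a point of $M$ on which it takes the value $1$, e.g. $\eb_s+\eb_w$ with $s\in S$ and $w\in V_2\setminus\Gamma(S)$ (this set being nonempty by the facet condition, with a small modification in the exceptional case $\Gamma(S)=V_2$, $|V_1\setminus S|=1$), and similarly for $\pm\eb_v$. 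Granting primitivity, the equation $\langle a_F,\alpha\rangle=\delta b_F-1$ reads $\alpha(v)=1$ in the first case, $\alpha(v)=\delta-1$ in the second, and $\alpha(S)-\alpha(\Gamma(S))=-1$ in the third, i.e. exactly (ii), (iii), (iv). This yields: $\PMSG$ is Gorenstein of index $\delta$ if and only if some $\alpha\in\ZZ^n$ satisfies (i)--(iv).

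Finally I would show that (i)--(iv) force $\delta\ge 2$. If $G$ has a $2$\-connected block $B$, choose $v\in V(B)$ that is not a cut vertex of $G$; then $\deg(v)\ge 2$, so (ii) and (iii) give $1=\delta-1$, whence $\delta=2$. If $G$ has no $2$\-connected block it is a tree, and here one argues directly: for $G=K_{1,m}$ with $m\ge 2$, conditions (ii)--(iii) force $\alpha$ to be the indicator vector of the leaves, which violates~(i); and for a tree that is not a star, choosing an internal vertex with a unique non\-leaf neighbor produces, via Proposition~\ref{bipartitefacet}(c), a facet\-inducing set $S$ for which $\alpha(S)-\alpha(\Gamma(S))$ is a positive integer, contradicting~(iv). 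I expect the two delicate points to be the primitivity verifications in the second step (needed to legitimately invoke the reflexivity criterion over the sublattice $M$) and this last case analysis excluding $\delta=1$; the rest is a direct translation of the criterion through the facet description.
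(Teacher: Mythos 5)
Your overall route is the same as the paper's: pass to the full-dimensional model $\psi^{-1}(\PMSG)$, use the characterization of ``Gorenstein of index $\delta$'' via an interior lattice point of $\delta\Pc$ lying at lattice distance one from every facet, and read conditions (ii)--(iv) off the facet list of Propositions~\ref{bipartitefacetfor} and \ref{bipartitefacet}. The paper does this by writing out the translated inequalities of $\delta\Pc-\beta$ explicitly rather than by discussing primitivity of the facet normals on the lattice $M=\mathscr{H}\cap\ZZ^n$, but the content is identical; your explicit primitivity check is a reasonable way to justify a step the paper leaves implicit.

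The genuine gap is in your last step, the exclusion of $\delta=1$. The claim ``if $G$ has a $2$-connected block $B$, choose $v\in V(B)$ that is not a cut vertex of $G$'' is false in general: in the bipartite graph obtained from $C_4$ by attaching one pendant edge to each of its four vertices, the unique $2$-connected block is the $4$-cycle, all four of its vertices are cut vertices, and every non-cut vertex has degree $1$. This graph is not a tree, so neither branch of your dichotomy applies, and your argument does not rule out $\delta=1$ for it (the conclusion is still true there --- condition (iv) applied to $S$ equal to a single pendant vertex forces $\delta=3$ --- but that needs a separate argument, and the connectivity hypotheses in (iv) are delicate when the relevant support vertex is a cut vertex). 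The paper sidesteps all of this: since $0\le x(v)\le 1$ is valid for $\PMSG$ and, $G$ being connected with at least two edges, the polytope lies in neither bounding hyperplane, any interior lattice point $\beta$ of $\delta\Pc$ satisfies $0<\beta(v)<\delta$, which forces $\delta\ge 2$ immediately. Replacing your block/tree case analysis by that one-line observation closes the gap.
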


\begin{proof}
Let $\Pc = \psi^{-1}(\PMSG)$, where $\psi$ is the map defined as above.
Then $\Pc$ is Gorenstein of index $\delta$
if and only if there exists 
a lattice point $\beta \in\delta (\Pc\setminus\partial\Pc)\cap\ZZ^{n-1}$ such that $\delta \Pc - \beta$ is a reflexive polytope, where $\delta \Pc =\{\delta a:a\in\Pc\}$.

By Proposition \ref{bipartitefacetfor}, substituting $x(n) = x(V_1) - x(V_2\setminus \{n\}) $, it follows that 
$\delta \Pc$ is a set of vectors $x \in \RR^{V \setminus \{n\}}$ such that
\begin{eqnarray}
0\leq x(v) \leq \delta & \mbox{ for all }  v \in V\setminus \{n\}, \label{deltazeroichi}\\ 
0 \le x(V_1) - x(V_2\setminus \{n\}) \leq \delta,& \\ 
x(S)-x(\Gamma(S))\leq0 &\mbox{ for all } \emptyset \ne S\subsetneq
V_1 \mbox{ such that } n \notin \Gamma(S),
\label{deltafacet2}\\
-x(V_1 \setminus S) + x(V_2 \setminus \Gamma(S) )\leq0 &\mbox{ for all } \emptyset \ne S\subsetneq V_1
\mbox{ such that } n \in \Gamma(S).\label{newfact}
\end{eqnarray}
Furthermore, 
$\delta \Pc -\beta$, where $\beta\in\delta (\Pc\setminus\partial\Pc)\cap\ZZ^{n-1}$, is a set of vectors $x \in \RR^{V \setminus \{n\}}$ such that
$$
-\beta(v) \leq x(v) \leq \delta -\beta (v)  \mbox{ for all }  v \in V\setminus \{n\}, 
$$
$$
-\beta(V_1) + \beta(V_2 \setminus \{n\}) \le x(V_1) - x(V_2\setminus \{n\}) \leq \delta - \beta(V_1) + \beta(V_2 \setminus \{n\}), 
$$
$$
x(S)-x(\Gamma(S))\leq -\beta(S)+ \beta(\Gamma(S))\mbox{ for all } \emptyset \ne S\subsetneq V_1 \mbox{ such that } n \notin \Gamma(S),
$$
$$-x(V_1 \setminus S) + x(V_2 \setminus \Gamma(S) )\leq
\beta(V_1 \setminus S) - \beta(V_2 \setminus \Gamma(S) )\mbox{ for all } \emptyset \ne S\subsetneq V_1
\mbox{ such that } n \in \Gamma(S).
$$
%
%
%
%
%
%
Note that $0 < \beta(v) < \delta$ and $-\beta(S)+\beta(\Gamma(S)\setminus \{n\})> 0$
for all $\emptyset \ne S\subsetneq V_1$.
Thus, 
$\Pc$ is Gorenstein if and only if $\delta \ge 2$ and  there exists $\beta \in\delta (\Pc\setminus\partial\Pc)\cap\ZZ^{n-1}$
such that
$$
\begin{array}{rll}
-\beta(v) &=-1 & \mbox{ if } v \ (\ne n) \mbox{ is not a cut vertex}\\
\delta-\beta(v)&=1 & \mbox{ if } \deg(v)\geq2, v \ne n\\
-\beta(V_1) + \beta(V_2 \setminus \{n\})&=-1 & \mbox{ if } n \mbox{ is not a cut vertex}\\
\delta - \beta(V_1) + \beta(V_2 \setminus \{n\})&=1 & \mbox{ if } \deg(n)\geq2\\
-\beta(S)+\beta(\Gamma(S)) &= 1 & \mbox{ if } n \notin \Gamma(S) \mbox{ and } G[S\cup\Gamma(S)] \mbox{ and } G[(V_1\setminus S)\cup(V_2\setminus \Gamma(S))]\\
 &  & \mbox{ are connected for a subset } \emptyset \ne S\subsetneq V_1,\\
 \beta(V_1 \setminus S) - \beta(V_2 \setminus \Gamma(S) ) &=1 & \mbox{ if } n \in \Gamma(S) \mbox{ and } G[S\cup\Gamma(S)] \mbox{ and } G[(V_1\setminus S)\cup(V_2\setminus \Gamma(S))]\\
 & &  \mbox{ are connected for a subset } \emptyset \ne S\subsetneq V_1.
\end{array}
$$
By taking $\alpha = \psi (\beta)$, this is equivalent to 
conditions (i)--(iv).
%
%
%
\end{proof}

The following characterization 
is known for graphs having a perfect matching.

\begin{prop}[Hall's marriage theorem]
Let $G$ be a bipartite graph on the vertex set $V_1 \cup V_2$.
Then $G$ has a perfect matching if and only if 
$|V_1| = |V_2|$ and
$|S| \le |\Gamma(S)|$ for any subset $\emptyset \ne S \subset V_1$.
\end{prop}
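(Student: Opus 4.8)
The plan is to handle the two directions separately, the forward implication being immediate and the converse carrying the weight. For the ``only if'' direction, let $M$ be a perfect matching of $G$. Since $M$ pairs up all the vertices, it restricts to a bijection $V_1 \to V_2$, so $|V_1| = |V_2|$; moreover, for any $\emptyset \ne S \subset V_1$, sending each $v \in S$ to its $M$-partner is an injection of $S$ into $\Gamma(S)$, because that partner is adjacent to $v$ and hence lies in $\Gamma(S)$. Therefore $|S| \le |\Gamma(S)|$.

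For the ``if'' direction, since $|V_1| = |V_2|$ a matching saturating every vertex of $V_1$ is automatically perfect, so it suffices to prove the following by induction on $|V_1|$: \emph{if $|\Gamma(S)| \ge |S|$ for every $\emptyset \ne S \subseteq V_1$, then $G$ has a matching saturating $V_1$.} The case $|V_1| \le 1$ is trivial, the only point being that when $|V_1| = 1$ Hall's condition forces the unique vertex of $V_1$ to have a neighbor. For the inductive step I would distinguish two cases according to whether Hall's inequality is ever tight on a nonempty proper subset of $V_1$.

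\textbf{Case A} ($|\Gamma(S)| \ge |S| + 1$ for every $\emptyset \ne S \subsetneq V_1$): choose any $v \in V_1$ and a neighbor $w$ of $v$, and set $G' = G - v - w$. For each $\emptyset \ne S \subseteq V_1 \setminus \{v\}$ one has $|\Gamma_{G'}(S)| \ge |\Gamma_G(S)| - 1 \ge |S|$, so by induction $G'$ has a matching saturating $V_1 \setminus \{v\}$, and adding $\{v, w\}$ gives one saturating $V_1$. \textbf{Case B} (some $\emptyset \ne A \subsetneq V_1$ satisfies $|\Gamma(A)| = |A|$): split $G$ into $G_1 = G[A \cup \Gamma(A)]$ and $G_2 = G[(V_1 \setminus A) \cup (V_2 \setminus \Gamma(A))]$. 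For $S \subseteq A$, the $G_1$-neighborhood of $S$ equals $\Gamma_G(S)$, which is contained in $\Gamma(A)$ and has size $\ge |S|$, so Hall's condition holds in $G_1$. For $\emptyset \ne S \subseteq V_1 \setminus A$, the $G_2$-neighborhood $\Gamma_{G_2}(S)$ equals $\Gamma_G(S) \setminus \Gamma(A)$, and from the disjoint-union identity $\Gamma_G(A \cup S) = \Gamma(A) \sqcup \Gamma_{G_2}(S)$ together with Hall's inequality applied to $A \cup S$ in $G$ we get $|A| + |\Gamma_{G_2}(S)| = |\Gamma_G(A \cup S)| \ge |A| + |S|$, so Hall's condition holds in $G_2$ as well. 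Since both $A$ and $V_1 \setminus A$ are strictly smaller than $V_1$, induction supplies matchings saturating $A$ in $G_1$ and $V_1 \setminus A$ in $G_2$, and their union saturates $V_1$.

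The delicate step is the verification of Hall's condition on the second block $G_2$ in Case B: one must relate $\Gamma_{G_2}(S)$ correctly to $\Gamma_G(S)$ and $\Gamma(A)$ and then use the tightness $|\Gamma(A)| = |A|$ via Hall's inequality for $A \cup S$. Everything else is bookkeeping. One could alternatively derive the converse from K\"onig's minimax theorem or from an augmenting-path argument, but the inductive splitting above is self-contained.
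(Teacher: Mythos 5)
The paper states this result as the classical Hall marriage theorem and supplies no proof of its own, so there is no internal argument to compare against. Your proof is correct: it is the standard Halmos--Vaughan induction, with the forward direction handled by restricting the perfect matching to $S$ and the converse split into the ``no tight set'' case (delete a matched pair, neighborhoods shrink by at most one) and the ``tight set'' case (split along $A$ with $|\Gamma(A)|=|A|$ and verify Hall's condition on both blocks). The step you flag as delicate does check out: for $S\subseteq V_1\setminus A$ one indeed has $\Gamma_G(A\cup S)=\Gamma(A)\sqcup\bigl(\Gamma_G(S)\setminus\Gamma(A)\bigr)$, and combining this with $|\Gamma_G(A\cup S)|\ge |A|+|S|$ and the tightness of $A$ yields $|\Gamma_{G_2}(S)|\ge |S|$ as claimed. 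The only point worth making explicit, though it is immediate, is that in the forward direction the $M$-partner of $v\in S$ lies in $V_2$ and hence outside $S$, so it genuinely belongs to $\Gamma(S)$ as the paper defines it (a subset of $V\setminus S$).
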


Recall that a
lattice polytope $\Pc \subset \RR^n$ is said to be 
{\it Gorenstein of index $2$} 
if there exists a lattice point $\alpha \in 2 (\Pc\setminus\partial\Pc)\cap\ZZ^n$ such that $2 \Pc - \alpha$ is a reflexive polytope.

\begin{prop}

\label{sagaichi}
Let $G$ be a connected bipartite graph.
Then $\PMSG$ is Gorenstein of index $2$
 if and only if $G$ has a perfect matching and, for any subset $\emptyset \ne S\subsetneq V_1$ such that
$G[S\cup\Gamma(S)]$ and $G[(V_1\setminus S)\cup(V_2\setminus \Gamma(S))]$ are connected,
    we have $|S| +1 = |\Gamma(S)|$.
    
\end{prop}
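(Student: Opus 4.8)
The plan is to deduce Proposition~\ref{sagaichi} from the general criterion for Gorensteinness of index $\delta$ given in Proposition~\ref{bipGoren} by specializing to $\delta = 2$. First I would observe that when $\delta = 2$, condition~(iii) of Proposition~\ref{bipGoren} forces $\alpha(v) = \delta - 1 = 1$ for every vertex $v$ with $\deg(v) \ge 2$, while condition~(ii) forces $\alpha(v) = 1$ for every vertex $v$ that is not a cut vertex. Since a connected graph with at least two edges has the property that every vertex either has degree $\ge 2$ or is a leaf (hence not a cut vertex), and since cut vertices necessarily have degree $\ge 2$, these two conditions together say exactly that $\alpha(v) = 1$ for \emph{all} $v \in V$; that is, $\alpha = \rho(V) = (1,1,\dots,1)$. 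So for $\delta = 2$ the existence of a suitable $\alpha$ reduces to checking that $\rho(V)$ itself satisfies conditions (i) and (iv).

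Next I would verify what conditions (i) and (iv) become for $\alpha = \rho(V)$. Condition~(i), $\alpha(V_1) = \alpha(V_2)$, reads $|V_1| = |V_2|$. Condition~(iv) says that whenever $G[S \cup \Gamma(S)]$ and $G[(V_1 \setminus S) \cup (V_2 \setminus \Gamma(S))]$ are connected for some $\emptyset \ne S \subsetneq V_1$, we have $\alpha(S) - \alpha(\Gamma(S)) = |S| - |\Gamma(S)| = -1$, i.e. $|S| + 1 = |\Gamma(S)|$. Thus Proposition~\ref{bipGoren} with $\delta = 2$ says: $\PMSG$ is Gorenstein of index $2$ if and only if $|V_1| = |V_2|$ and $|S| + 1 = |\Gamma(S)|$ for every $S$ as in (iv). (One should also double-check the side condition $\alpha \in 2(\Pc \setminus \partial \Pc)$ forcing $\delta \ge 2$ is automatic here, and that no extra degenerate cases with $G$ having fewer than two edges slip through; by the running assumption in Section~5 we take $G$ to have at least two edges, and $K_1, K_2$ can be handled separately.)

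The last step is to replace the condition ``$|V_1| = |V_2|$'' by ``$G$ has a perfect matching.'' By Hall's marriage theorem (quoted just above the proposition), $G$ has a perfect matching if and only if $|V_1| = |V_2|$ and $|S| \le |\Gamma(S)|$ for all $\emptyset \ne S \subset V_1$. So I must show that, in the presence of condition~(iv), the statement ``$|V_1| = |V_2|$'' is equivalent to ``$G$ has a perfect matching.'' One direction is immediate: a perfect matching forces $|V_1| = |V_2|$. For the converse I would argue that if Hall's condition failed, there would be a \emph{minimal} violating set $S$ with $|S| > |\Gamma(S)|$, and minimality (together with connectivity of $G$) can be used to arrange that $G[S \cup \Gamma(S)]$ and the complementary induced subgraph are connected --- contradicting condition~(iv), which would force $|S| < |\Gamma(S)|$.

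The main obstacle I anticipate is precisely this last reduction: showing that a Hall-violating set can be chosen to induce a facet of $\PMSG$ (equivalently, that both relevant induced subgraphs are connected), so that condition~(iv) applies to it. Handling the connectivity bookkeeping — passing from an arbitrary violating $S$ to one whose induced subgraphs are connected, possibly by decomposing along connected components of $G[S \cup \Gamma(S)]$ and choosing a single bad component — is the delicate part; the rest is a routine specialization of Proposition~\ref{bipGoren}.
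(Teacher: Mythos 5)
Your reduction to Proposition~\ref{bipGoren} with $\delta=2$ and the identification $\alpha=(1,1,\dots,1)$ is exactly the paper's route, and conditions (i) and (iv) do become $|V_1|=|V_2|$ and $|S|+1=|\Gamma(S)|$ for the facet-inducing sets $S$. The problem is your last step, where you try to upgrade ``$|V_1|=|V_2|$'' to ``$G$ has a perfect matching'' by massaging a Hall-violating set into one to which condition (iv) applies. A minimal violator $S$ (minimal with $|S|>|\Gamma(S)|$) does have $G[S\cup\Gamma(S)]$ connected --- if it split into pieces $S_i\cup\Gamma(S_i)$, some $S_i$ would already violate Hall --- but minimality gives no control over the connectivity of $G[(V_1\setminus S)\cup(V_2\setminus\Gamma(S))]$, and you offer no mechanism for repairing that side. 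So the ``delicate part'' you flag is a genuine gap, not mere bookkeeping.

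The missing idea is that no combinatorial surgery is needed. By Proposition~\ref{bipartitefacetfor}, $\PMSG$ is cut out by $0\le x(v)\le 1$, $x(V_1)=x(V_2)$, and $x(S)\le x(\Gamma(S))$ for \emph{all} $\emptyset\ne S\subset V_1$; the inequalities for non-facet-inducing $S$ are redundant, i.e.\ implied by the facet-inducing ones together with the equation. Hence if $|V_1|=|V_2|$ and condition (iv) holds, the point $(1/2,\dots,1/2)$ satisfies the equation and every facet inequality (with $x(S)-x(\Gamma(S))=-1/2<0$), so it lies in $\PMSG$ and therefore satisfies $|S|\le|\Gamma(S)|$ for \emph{every} $S$; Hall's theorem then yields the perfect matching. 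This is how the paper argues, in compressed form: the vector $\alpha=(1,\dots,1)$ is forced to be a (relative-interior) point of $2\PMSG$, and by the inequality description of $2\PMSG$ its membership there is precisely Hall's condition together with $|V_1|=|V_2|$. With that substitution in place of your minimal-violator argument, your proof closes and coincides with the paper's.
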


\begin{proof}
Let $\alpha$ be the vector in Proposition \ref{bipGoren}.
Since $0<\alpha(v)< \delta = 2$ for any $v \in V_1 \cup V_2$, 
we have $\alpha = (1,1,\dots,1)$. Hence,
$\alpha(S) - \alpha(\Gamma(S))=|S| - |\Gamma(S)|$.
From Hall's marriage theorem,
the vector $\alpha= (1,1,\dots,1)$ belongs to $2\PMSG$
if and only if $G$ has a perfect matching.
\end{proof}

\begin{thm}
\label{2Gor}
Suppose that a connected bipartite graph
$G$ has a vertex $v$ with $\deg(v) \ge 2$ such that $v$ is not a cut vertex.
Then the following conditions are equivalent{\rm :}
\begin{itemize}
    \item[{\rm (i)}] 
$\PMSG$ is Gorenstein{\rm ;}
    \item[{\rm (ii)}] 
    $\PMSG$ is Gorenstein of index $2${\rm ;}
    \item[{\rm (iii)}] 
$G$ has a perfect matching and, for any subset $\emptyset \ne S\subsetneq V_1$ such that
$G[S\cup\Gamma(S)]$ and $G[(V_1\setminus S)\cup(V_2\setminus \Gamma(S))]$ are connected,
    we have $|S| +1 = |\Gamma(S)|$.
    
\end{itemize}

\end{thm}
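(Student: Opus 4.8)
The plan is to prove the chain of implications (ii) $\Rightarrow$ (i) $\Rightarrow$ (iii) $\Rightarrow$ (ii), since (iii) $\Leftrightarrow$ (ii) is exactly the content of Proposition \ref{sagaichi} and requires no extra hypothesis on $G$. Thus the only real work is to show (i) $\Rightarrow$ (iii), i.e., that Gorensteinness of $\PMSG$ of \emph{any} index forces the index to be $2$ once $G$ has a non-cut vertex $v$ of degree $\ge 2$. First I would invoke Proposition \ref{bipGoren}: if $\PMSG$ is Gorenstein of index $\delta$, then $\delta \ge 2$ and there is an integer vector $\alpha \in \ZZ^n$ satisfying conditions (i)--(iv) of that proposition. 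The key observation is that conditions (ii) and (iii) of Proposition \ref{bipGoren} both apply to the distinguished vertex $v$: since $v$ is not a cut vertex we get $\alpha(v) = 1$, and since $\deg(v) \ge 2$ we get $\alpha(v) = \delta - 1$. Combining these gives $1 = \delta - 1$, hence $\delta = 2$. Then $\PMSG$ is Gorenstein of index $2$, which is (ii), and Proposition \ref{sagaichi} delivers (iii).

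For the easy direction (ii) $\Rightarrow$ (i): Gorenstein of index $2$ is in particular Gorenstein, so nothing is needed here beyond the definitions recalled in the introduction. For (iii) $\Rightarrow$ (ii) I would again cite Proposition \ref{sagaichi} directly. So the skeleton is short: the hypothesis on $v$ is used \emph{only} to pin down $\delta = 2$, and everything else is bookkeeping with results already in hand.

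The one point that needs a little care is making sure Proposition \ref{bipGoren} is applicable, i.e., that $\PMSG$ (equivalently $\Pc = \psi^{-1}(\PMSG)$) really is a full-dimensional lattice polytope of the expected dimension so that ``Gorenstein of index $\delta$'' is meaningful; this is guaranteed by the dimension formula $\dim \PMSG = n-1$ for connected bipartite $G$ recalled in Section 2, together with the affine isomorphism $\psi$. I should also note that the existence of a non-cut vertex of degree $\ge 2$ is automatically available when $G$ has at least two edges and is, say, 2-connected, but the statement as phrased only assumes such a $v$ exists, which is exactly what the argument consumes.

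The main (and really the only) obstacle is conceptual rather than technical: recognizing that conditions (ii) and (iii) of Proposition \ref{bipGoren} \emph{overlap} precisely on vertices that are simultaneously non-cut and of degree $\ge 2$, and that this overlap forces $\delta = 2$. Once that is seen, the proof is a two-line deduction plus citations of Propositions \ref{bipGoren} and \ref{sagaichi}.
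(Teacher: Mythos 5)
Your proposal is correct and follows exactly the paper's own argument: (ii) $\Leftrightarrow$ (iii) is Proposition~\ref{sagaichi}, (ii) $\Rightarrow$ (i) is immediate, and (i) $\Rightarrow$ (ii) is obtained by applying conditions (ii) and (iii) of Proposition~\ref{bipGoren} to the distinguished vertex $v$ to force $\alpha(v)=1=\delta-1$, hence $\delta=2$. No differences worth noting.
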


\begin{proof}
From Proposition~\ref{sagaichi}, (ii) $\Leftrightarrow$ (iii).
It is obvious that (ii) $\Rightarrow$ (i).
It remains prove that (i) $\Rightarrow$ (ii).
Since $\alpha$ in Proposition \ref{bipGoren}
satisfies $\alpha(v) = \delta -1  =1$, we obtain $\delta=2$. 
\end{proof}

On the other hand, the following is known.

\begin{prop}[{\cite[Theorem 2.1 (iii) ($\mbox{a}'$)]{OH}}]
\label{edgepolyGor}
 Let $G$ be a 2-connected bipartite graph.
Then the edge polytope ${\rm Ed}(G)$ is Gorenstein if and only if $G$ has a perfect matching and, for any subset $\emptyset \neq S
\subset V_1$ such that
$G[S\cup\Gamma(S)]$ is connected and 
that $G[(V_1\setminus S)\cup(V_2\setminus \Gamma(S))]$ is connected and has at least one edge,
 we have $|S| +1 = |\Gamma(S)|$.
\end{prop}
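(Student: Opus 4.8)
The plan is to prove Proposition~\ref{edgepolyGor} by the same reflexivity criterion used in the proof of Proposition~\ref{bipGoren}, now applied to ${\rm Ed}(G)$ in place of $\PMSG$. Since $G$ is connected bipartite, ${\rm Ed}(G)$ lies in the codimension-two affine subspace $\{x\in\RR^n : x(V_1)=x(V_2)=1\}$ and has dimension $n-2$; composing with a suitable unimodular affine isomorphism of this subspace (together with its lattice) as in the paragraph preceding Proposition~\ref{bipGoren}, the assertion ``${\rm Ed}(G)$ is Gorenstein of index $\delta$'' becomes: there is a lattice point $\alpha\in\ZZ^n$ with $\alpha(V_1)=\alpha(V_2)=\delta$ lying in the relative interior of $\delta\,{\rm Ed}(G)$ such that $\delta\,{\rm Ed}(G)-\alpha$ is reflexive, i.e.\ every facet inequality of $\delta\,{\rm Ed}(G)$ becomes ``$\le 1$'' after translation by $\alpha$.

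The first real step is to record the irredundant facet description of ${\rm Ed}(G)$ for a $2$-connected bipartite $G$ (this is the input from \cite{OH}): besides the equalities $x(V_1)=x(V_2)=1$, the facets are exactly \textbf{(a)} $x(v)\ge 0$ for every $v\in V$ --- here $2$-connectivity is used, since $x(v)\ge 0$ is facet-inducing precisely when $G\setminus v$ is connected --- and \textbf{(b)} $x(S)-x(\Gamma(S))\le 0$ for every $\emptyset\ne S\subset V_1$ such that $G[S\cup\Gamma(S)]$ is connected and $G[(V_1\setminus S)\cup(V_2\setminus\Gamma(S))]$ is connected with at least one edge; the ``at least one edge'' clause removes exactly the degenerate $S$ for which the inequality collapses to one of type (a) or to $0\le 0$, and the Remark after Proposition~\ref{bipartitefacet} shows that restricting to $S\subset V_1$ introduces no redundancy. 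After dilation by $\delta$ these read $x(v)\ge 0$ and $x(S)-x(\Gamma(S))\le 0$ together with $x(V_1)=x(V_2)=\delta$.

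Next I would translate reflexivity of $\delta\,{\rm Ed}(G)-\alpha$ into a linear system for $\alpha$, exactly as in the proof of Proposition~\ref{bipGoren}. The facets (a) force $\alpha(v)=1$ for every $v\in V$, hence $\alpha=(1,1,\dots,1)$; then $\delta=\alpha(V_1)=|V_1|$, and in particular $|V_1|=|V_2|$. The facets (b) force $\alpha(\Gamma(S))-\alpha(S)=1$, which for $\alpha=(1,\dots,1)$ is $|\Gamma(S)|-|S|=1$. It remains to decide when $\alpha=(1,\dots,1)$ actually lies in the relative interior of $|V_1|\,{\rm Ed}(G)$. Membership $(1,\dots,1)\in|V_1|\,{\rm Ed}(G)$ is equivalent to $(1,\dots,1)$ being a sum of $|V_1|$ of the vertices $\rho(e)$ of ${\rm Ed}(G)$ --- using that ${\rm Ed}(G)$ is normal for bipartite $G$ (equivalently, that the fractional perfect matching polytope of a bipartite graph is integral) --- and since every coordinate of such a sum equals $1$, those $|V_1|$ edges are pairwise disjoint and cover $V$, i.e.\ form a perfect matching; conversely a perfect matching $M$ gives $\sum_{e\in M}\rho(e)=(1,\dots,1)$. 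Once $G$ has a perfect matching and $|\Gamma(S)|=|S|+1$ holds for all $S$ of type (b), the point $\alpha=(1,\dots,1)$ satisfies all facet inequalities strictly ($\alpha(v)=1>0$ and $\alpha(\Gamma(S))-\alpha(S)=1>0$), so it lies in the relative interior, and $\delta=|V_1|$ works. Assembling these observations gives the stated equivalence (the index being $|V_1|$ rather than necessarily $2$ is the point of contrast with Theorem~\ref{2Gor}).

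The main obstacle is establishing the facet list (a)+(b) as the \emph{irredundant} description: that $2$-connectivity makes every $x(v)\ge 0$ facet-inducing, that the connectivity-plus-``has an edge'' conditions pin down exactly the subsets $S$ giving facets, and that no facet is missing or double-counted. This is the combinatorial-polyhedral heart of the matter --- it is essentially \cite[Theorem~2.1]{OH}, proved there by exhibiting sufficiently many affinely independent lattice points on each candidate facet. Granting it, everything else is routine: linear algebra over the facet system, Hall's marriage theorem to phrase ``perfect matching'' once $|V_1|=|V_2|$, and the normality observation identifying $(1,\dots,1)\in|V_1|\,{\rm Ed}(G)$ with the existence of a perfect matching.
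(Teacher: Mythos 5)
The paper does not prove this proposition at all: it is imported verbatim from \cite[Theorem 2.1 (iii) ($\mbox{a}'$)]{OH}, so there is no internal proof to compare against. Your reconstruction is essentially correct, and it is in fact the same reflexivity computation that the paper carries out for $\PMSG$ in Proposition~\ref{bipGoren} and Proposition~\ref{sagaichi}, transplanted to ${\rm Ed}(G)$: the facets $x(v)\ge 0$ force $\alpha=(1,\dots,1)$ and $\delta=|V_1|=|V_2|$, the facets $x(S)\le x(\Gamma(S))$ force $|\Gamma(S)|=|S|+1$, and membership of $(1,\dots,1)$ in $|V_1|\,{\rm Ed}(G)$ is exactly the existence of a perfect matching (via Hall, or via normality of bipartite edge polytopes from Proposition~\ref{epnormal}). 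Your dimension counts behind the facet list are right: $x(v)=0$ cuts out ${\rm Ed}(G\setminus v)$, of dimension $n-3$ precisely when $v$ is not a cut vertex, and $x(S)=x(\Gamma(S))$ cuts out the edge polytope of the disjoint union of the two induced pieces, of dimension $n-3$ precisely when both are connected and each carries an edge. Two points deserve to be made explicit. First, the $H$-description ${\rm Ed}(G)=\{x\ge 0:\ x(V_1)=x(V_2)=1,\ x(S)\le x(\Gamma(S))\ \forall S\subset V_1\}$ is not \cite[Theorem 2.1]{OH} itself (citing that would be circular); it is the Gale--Hoffman supply--demand feasibility criterion for bipartite flows, and it is the genuine external input of your argument, so it should be cited as such. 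Second, reading off ``lattice distance one'' from the coefficients requires checking that the functionals $x_v$ and $x(S)-x(\Gamma(S))$ are primitive on the direction lattice $\{x\in\ZZ^n: x(V_1)=x(V_2)=0\}$; this is a one-line verification (evaluate on $\eb_i-\eb_{i'}$ with $i,i'$ in the same part), but it is the step your sketch glosses over. With those two items filled in, the proof is complete, and your closing remark is apt: the absence of the upper-bound facets $x(v)\le 1$ is exactly why the Gorenstein index here is $|V_1|$ rather than the forced index $2$ of Theorem~\ref{2Gor}.
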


If $G$ is a 2-connected bipartite graph, 
the conditions in Proposition~\ref{sagaichi}
and Proposition~\ref{edgepolyGor} 
are equivalent.
Since the statement is slightly different,
we give a proof for the readers.

\begin{cor}
\label{pmsedgeGor}
Let $G$ be a 2-connected bipartite graph.
Then $\PMSG$ is Gorenstein if and only if the edge polytope ${\rm Ed}(G)$ of $G$ is Gorenstein.
\end{cor}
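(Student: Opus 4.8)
The plan is to reduce each direction of the equivalence to an explicit combinatorial condition using results already proved, and then to compare the two conditions directly. First I would observe that, since $G$ is $2$-connected and has at least two edges (our standing assumption in this section), $G$ has at least three vertices; consequently every vertex of $G$ has degree $\ge 2$ and none is a cut vertex, so the hypothesis of Theorem~\ref{2Gor} is met. Thus $\PMSG$ is Gorenstein if and only if condition {\rm (C1)} holds: \emph{$G$ has a perfect matching, and $|S|+1=|\Gamma(S)|$ for every $\emptyset\ne S\subsetneq V_1$ such that $G[S\cup\Gamma(S)]$ and $G[(V_1\setminus S)\cup(V_2\setminus\Gamma(S))]$ are both connected}. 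Likewise, Proposition~\ref{edgepolyGor} says ${\rm Ed}(G)$ is Gorenstein if and only if condition {\rm (C2)} holds: \emph{$G$ has a perfect matching, and $|S|+1=|\Gamma(S)|$ for every $\emptyset\ne S\subset V_1$ such that $G[S\cup\Gamma(S)]$ is connected and $G[(V_1\setminus S)\cup(V_2\setminus\Gamma(S))]$ is connected and has at least one edge}. So it suffices to prove {\rm (C1)} $\Leftrightarrow$ {\rm (C2)}.

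For {\rm (C1)} $\Rightarrow$ {\rm (C2)}, I would take $S$ satisfying the hypotheses of {\rm (C2)}: since $H:=G[(V_1\setminus S)\cup(V_2\setminus\Gamma(S))]$ is bipartite with sides $V_1\setminus S$ and $V_2\setminus\Gamma(S)$ and has an edge, both sides are nonempty; in particular $S\subsetneq V_1$, and then $S$ is among the sets appearing in {\rm (C1)}, giving $|S|+1=|\Gamma(S)|$. For {\rm (C2)} $\Rightarrow$ {\rm (C1)}, take $\emptyset\ne S\subsetneq V_1$ with $G[S\cup\Gamma(S)]$ and $H$ both connected. If $H$ has an edge, {\rm (C2)} applies directly. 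If $H$ has no edge, then (being connected and nonempty, as $V_1\setminus S\ne\emptyset$) $H$ is a single vertex, which must lie in $V_1\setminus S$; hence $|V_1\setminus S|=1$ and $V_2\setminus\Gamma(S)=\emptyset$, i.e.\ $\Gamma(S)=V_2$ and $|S|=|V_1|-1$. Since $G$ has a perfect matching, Hall's marriage theorem gives $|V_1|=|V_2|$, whence $|S|+1=|V_2|=|\Gamma(S)|$, and {\rm (C1)} follows.

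The only delicate point, and essentially the whole content of the argument, is this last bookkeeping step: the families of $S$ quantified over in {\rm (C1)} and {\rm (C2)} differ (proper versus arbitrary nonempty subsets of $V_1$; ``second piece connected'' versus ``second piece connected and containing an edge''), and one must check that the discrepancy is supported precisely on the sets with $\Gamma(S)=V_2$ and $|S|=|V_1|-1$, on which the constraint $|S|+1=|\Gamma(S)|$ is automatic once $G$ has a perfect matching. Chaining the implications then yields $\PMSG$ Gorenstein $\Leftrightarrow$ {\rm (C1)} $\Leftrightarrow$ {\rm (C2)} $\Leftrightarrow$ ${\rm Ed}(G)$ Gorenstein.
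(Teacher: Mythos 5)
Your proposal is correct and follows essentially the same route as the paper: both reduce the two Gorenstein properties to the combinatorial conditions of Theorem~\ref{2Gor} and Proposition~\ref{edgepolyGor}, and both observe that the only sets $S$ on which the two quantifications differ are those where $G[(V_1\setminus S)\cup(V_2\setminus\Gamma(S))]$ is a single vertex or empty, where the equality $|S|+1=|\Gamma(S)|$ is forced by the perfect matching via $|V_1|=|V_2|$. The only cosmetic difference is that you isolate the comparison as an explicit equivalence (C1) $\Leftrightarrow$ (C2), whereas the paper argues the two implications directly.
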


\begin{proof}
Suppose that $G$ is a 2-connected bipartite graph.
Then $\deg(v) \ge 2$ for any vertex $v$ of $G$.

Suppose that ${\rm Ed}(G)$ is Gorenstein.
Assume that, for $\emptyset \ne S\subsetneq V_1$,
$G[(V_1\setminus S)\cup(V_2\setminus \Gamma(S))]$
is a connected graph with no edges.
Then
$G[(V_1\setminus S)\cup(V_2\setminus \Gamma(S))]$
has exactly one vertex $v$.
It then follows that  $V_1  = S \cup \{v\}$
and $V_2 = \Gamma(S)$.
Since $G$ has a perfect matching,
we have $|V_1| = |V_2|$.
Thus, $|S| +1 = |V_1|= |V_2|=|\Gamma(S)|$
in this case.
Thus, by Proposition~\ref{2Gor}, $\PMSG$ is Gorenstein.

Suppose that $\PMSG$ is Gorenstein.
Assume that, for $\emptyset \ne S\subset V_1$,
$G[(V_1\setminus S)\cup(V_2\setminus \Gamma(S))]$
is a connected graph.
If $S \ne V_1$, then $|S| +1 =|\Gamma(S)|$ by Proposition~\ref{2Gor}.
If $S=V_1$, then $\Gamma(S) = V_2$
and hence $G[(V_1\setminus S)\cup(V_2\setminus \Gamma(S))]$ has no vertices.
Thus, by Proposition~\ref{edgepolyGor}, ${\rm Ed}(G)$ is Gorenstein.
\end{proof}

Theorem~\ref{secondmain} follows from
Theorem \ref{2Gor} and Corollary \ref{pmsedgeGor}.

\begin{remark}
The conclusion of Corollary \ref{pmsedgeGor} is not true if
$G$ is not a $2$-connected bipartite graph.
There are many bipartite graphs $G$ such that ${\rm Ed}(G)$ is Gorenstein and
$\PMSG$ is not Gorenstein.
\begin{itemize}
    \item[(a)] 
Let $G$ be a bipartite pseudotree.
Then the edge polytope ${\rm Ed}(G)$ of $G$
is Gorenstein since the toric ring of ${\rm Ed}(G)$ is either 
isomorphic to a polynomial ring or a hypersurface.
\item[(b)]
Let $G$ be a bipartite pseudotree.
Then $\PMSG$ is Gorenstein if and only if 
$G$ satisfies either (i) or (iv) in Theorem \ref{pseudotreeGorensteinODD}.
For example, 
the perfectly matchable subgraph polytope of the 
bipartite pseudotree with the edge set 
$$\{ \{1,2\}, \{2,3\}, \{3,4\}, \{1,4\}\} \cup \{\{4,5\},\{5,6\},\dots, \{n-1,n\} \}$$
is not Gorenstein.    
\end{itemize}
\end{remark}

\begin{prop}
Let $G$ be a 2-connected bipartite graph.
Then we have the following.
\begin{itemize}
    \item[(a)] If $G$ is outerplanar, then $\PMSG$ is Gorenstein.
    
    \item[(b)] If $G$ is 4-connected and planar, then $\PMSG$ is not Gorenstein.
\end{itemize}

\end{prop}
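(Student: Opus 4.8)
The plan for part (a) is to invoke Theorem~\ref{2Gor}, whose hypothesis holds automatically here: a $2$-connected graph (which then has at least three vertices) has no cut vertex and minimum degree at least $2$, so every vertex witnesses the hypothesis. Hence it suffices to verify the two conditions of Theorem~\ref{2Gor}(iii): that $G$ has a perfect matching, and that $|S|+1=|\Gamma(S)|$ for every $\emptyset\ne S\subsetneq V_1$ for which both $G[S\cup\Gamma(S)]$ and $G[(V_1\setminus S)\cup(V_2\setminus\Gamma(S))]$ are connected.

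A $2$-connected outerplanar graph is Hamiltonian, its Hamiltonian cycle $C$ being the boundary of the outer face; since $G$ is bipartite, $C$ has even length, so alternate edges of $C$ form a perfect matching. Now fix $S$ as above, set $A=S\cup\Gamma(S)$ and $B=(V_1\setminus S)\cup(V_2\setminus\Gamma(S))$, so that $V(C)=A\sqcup B$ with $A\cap V_1=S$ and $A\cap V_2=\Gamma(S)$, and both $A,B\ne\emptyset$. The heart of the proof is to show that $A$ (hence also $B$) is an \emph{arc} of $C$, i.e.\ a run of consecutive vertices. Otherwise, going around $C$ one finds four vertices in cyclic order $a_1,b_1,a_2,b_2$ with $a_1,a_2\in A$ and $b_1,b_2\in B$; connectedness of $G[A]$ and of $G[B]$ then yields an $a_1$--$a_2$ path inside $A$ and a $b_1$--$b_2$ path inside $B$, which are vertex-disjoint. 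Embedding $G$ outerplanarly with every edge (including those of $C$) drawn as a chord in the open disc bounded by $C$, these two paths become disjoint arcs in the disc whose endpoints interleave on the boundary circle, which is impossible by the Jordan curve theorem. Hence $A$ and $B$ are complementary arcs of $C$.

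Write $C=(v_1,\dots,v_{2k})$ and $A=\{v_1,\dots,v_\ell\}$. If, say, $v_1\in V_1$, then $v_1\in S$, and its $C$-neighbour $v_{2k}$ (necessarily in $V_2$) would lie in $\Gamma(S)$, contradicting $v_{2k}\in B\cap V_2=V_2\setminus\Gamma(S)$; thus both endpoints $v_1,v_\ell$ of $A$ lie in $V_2$ (in particular $\ell\ge2$, since $\ell=1$ would force $S=\emptyset$). An arc of the even alternating cycle $C$ with both endpoints in $V_2$ has odd length, so $|\Gamma(S)|=|A\cap V_2|=|A\cap V_1|+1=|S|+1$, which is exactly the required condition; by Theorem~\ref{2Gor} this proves (a). (One could instead argue through Corollary~\ref{pmsedgeGor} and Proposition~\ref{edgepolyGor}, but the route above is more direct.) The main obstacle is precisely the arc claim: one has to choose the outerplanar embedding so that no edge runs along $C$, and then make the ``two disjoint paths with interleaved endpoints must cross'' argument rigorous; the remaining ingredients --- Hamiltonicity of $2$-connected outerplanar graphs, the perfect matching, and the endpoint analysis --- are routine.

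For part (b) the hypotheses are never met: a $4$-connected graph has minimum degree at least $4$ and hence at least $2|V(G)|$ edges, whereas a simple bipartite planar graph on $n\ge3$ vertices has at most $2n-4$ edges. Thus no $2$-connected bipartite graph is both $4$-connected and planar, and the implication holds vacuously.
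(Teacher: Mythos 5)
Your part (a) is correct and follows essentially the same route as the paper: the outer face of a $2$-connected outerplanar graph is an even Hamiltonian cycle $C$ (giving the perfect matching), and the key step is that for any facet-inducing $S$ the set $S\cup\Gamma(S)$ must be a single arc of $C$, after which the parity of the arc's endpoints yields $|S|+1=|\Gamma(S)|$. The paper phrases the arc claim as a case analysis ($p=1$ versus $p\ge 2$ arcs) and derives a crossing of two single \emph{edges} from connectedness of both sides, whereas you extract two vertex-disjoint \emph{paths} with interleaved endpoints and invoke the Jordan curve theorem; both are the same planarity obstruction, and your version is, if anything, slightly more robust since it does not require the connecting edges to have endpoints in specific arcs. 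Routing the conclusion through Theorem~\ref{2Gor} rather than Proposition~\ref{sagaichi} is immaterial.

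For part (b) you take a genuinely different route: you observe that a simple bipartite planar graph on $n\ge 3$ vertices has at most $2n-4$ edges while $4$-connectivity forces at least $2n$ edges, so the hypothesis class is empty and the implication is vacuously true. This is a valid proof, and it is worth making explicit what it reveals: the paper instead argues directly (taking $S=\{v\}$, noting $|\Gamma(S)|\ge 4>|S|+1$, and citing a lemma on $4$-connected planar graphs to get connectedness of the complementary induced subgraph), but that argument reasons about graphs that do not exist. Your observation shows that part (b) as stated carries no content; the substantive statement the authors presumably intend would require weakening the hypothesis (e.g., dropping bipartiteness from the ambient assumption, or replacing ``$4$-connected'' by a condition compatible with bipartite planarity). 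You might add one sentence recording the edge-count computation ($\sum_v \deg(v)\ge 4n$ versus $2|E|\le 4n-8$) so the vacuity claim is self-contained.
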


\begin{proof}
Let $G$ be a 2-connected bipartite graph with $n$ vertices.

(a)
   Suppose that $G$ is outerplanar. 
   Then $G$ has an even cycle $(i_1,i_2,\dots,i_n)$ of length $n$ which corresponds to the outer face of $G$.
 Suppose that, for a subset $\emptyset \neq S\subsetneq V_1$, both $G[S\cup\Gamma(S)]$ and
 $G[(V_1\setminus S)\cup(V_2\setminus \Gamma(S))]$ are connected.
We may assume that 
$$S \cup \Gamma(S) =
\{i_1, i_2, \ldots, i_{k_1'}\} \cup
\{i_{k_2}, i_{k_2+1}, \ldots, i_{k_2'}\} \cup \cdots \cup 
\{i_{k_p}, i_{k_p+1}, \ldots, i_{k_p'}\},
$$
where $k_j'+1 < k_{j+1} $ for each $j = 1,2, \ldots, p-1$. 
Since $S \neq V_1$, we may assume that $k_p' <n$.

\medskip

\noindent
{\bf Case 1.} ($p=1$.)
If $i_1$ (resp.~$i_{k_1'}$) belongs to $S$,
then $i_n$ (resp.~$i_{k_1'+1}$) belongs to $\Gamma(S)$.
This is a contradiction.
Hence $i_1$ and $i_{k_1'}$ belong to $\Gamma(S)$.
Then $S= \{i_2, i_4, \dots, i_{k_1'-1 }\}$ and 
$\Gamma(S)= \{i_1, i_3, \dots, i_{k_1'} \}$.
Hence 
$|S| + 1 =  |\Gamma(S)|$.

\medskip

\noindent 
{\bf Case 2.} ($p\ge 2$.) 
Since $G[S\cup\Gamma(S)]$ is connected, there exists an edge $e_1 = \{i_\alpha, i_\beta\}$ of
$G[S\cup\Gamma(S)]$, where $1 \le \alpha \le k_1'$ and $ k_q \le \beta \le k_q'$ for some $2 \le q \le p$.
On the other hand, since $G[(V_1\setminus S)\cup(V_2\setminus \Gamma(S))]$ is connected,
there exists an edge $e_2 = \{i_\gamma, i_\delta\}$ of
$G[(V_1\setminus S)\cup(V_2\setminus \Gamma(S))]$, where $k_1' < \gamma  < k_q$ and $k_q' < \delta  \le n$.
Then $e_1$ and $e_2$ intersect in the drawing.
This contradicts that $G$ is outerplanar.

\bigskip

Thus, $G$ satisfies the condition in
Proposition~\ref{sagaichi}, and hence
$\PMSG$ is Gorenstein.

(b)
Suppose that $G$ is 4-connected and planar.
Let $S=\{v\}$, where $v \in V_1$ is a vertex of $G$.
Since $G$ is 4-connected, the degree of each vertex of $G$ is greater than or equal to $4$.
Hence, we have $|\Gamma(S)| \ge 4 > |S| +1$.
In addition, $G[S \cup \Gamma(S)]$ is a star graph
and hence connected.
Since $G$ is 4-connected and planar,
it is known \cite[Lemma 1]{4-connected} that
$G[(V_1\setminus S)\cup(V_2\setminus \Gamma(S))]$ is connected.
Hence, $G$ does not satisfy the condition in
Proposition~\ref{sagaichi}.
Thus, $\PMSG$ is not Gorenstein.
\end{proof}

\subsection{Pseudotrees with an even cycle}

In this subsection, we give a proof of Theorem \ref{pseudotreeGorensteinODD} for a pseudotree which has an even cycle.

\begin{proof}[Proof of Theorem \ref{pseudotreeGorensteinODD} (when $G$ has an even cycle $C$)]
Let $V= V_c \sqcup V_t \sqcup V_p$ be a partition of the vertex set $V$ of $G$, where $V_c=\{v_1,\dots,v_{2n}\}$ is
the vertex set of the even cycle $C =(v_1,v_2,\dots,v_{2n})$,
$V_t = \{v \in V\setminus V_c : \deg(v) > 1  \}$,
and 
$V_p = \{v \in V\setminus V_c : \deg(v) = 1  \}$.
%
The graph $G'$ obtained from $G$ by deleting edges of $C$ has $2n$ connected components.
Each of these connected components of $G'$ is a tree.
We regard each tree as a rooted tree whose root is a vertex in $C$.
Let $T_v$ be the  rooted subtree of such a rooted tree in $G'$ whose root is $v$.
See Figure~\ref{subtreenorei}.

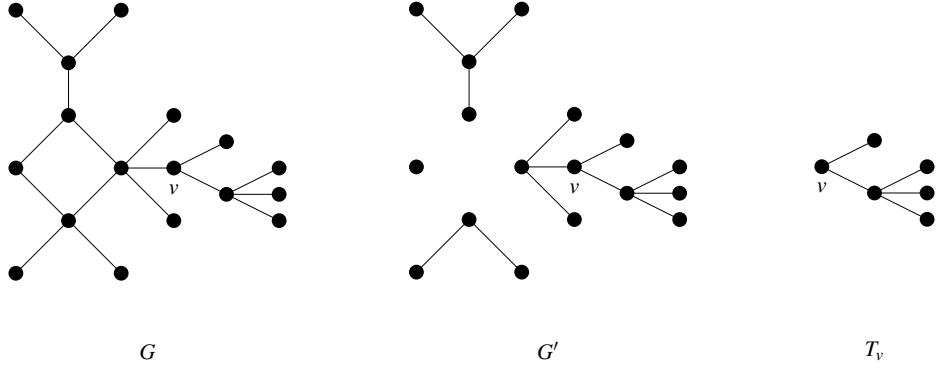
\begin{figure}
\scalebox{0.7}{
\begin{tabular}{cc}
\begin{minipage} [b] {0.5\hsize}
\begin{tikzpicture}
\coordinate (v1) at (0,1);
\coordinate (v2) at (1,0);
\coordinate (v3) at (0,-1);
\coordinate (v4) at (-1,0);
\coordinate (v5) at (2,1);
\coordinate (v6) at (2,0);
\coordinate (v7) at (2,-1);
\coordinate (v8) at (3,0.5);
\coordinate (v9) at (3,-0.5);
\coordinate (v10) at (4,0);
\coordinate (v11) at (4,-0.5);
\coordinate (v12) at (4,-1);

\coordinate (v13) at (2,-0.4) node at (v13) {$v$};

\coordinate (v14) at (1.5,-3.5) node at (v14) {$G$};

\coordinate (v15) at (0,2);
\coordinate (v16) at (1,3);
\coordinate (v17) at (-1,3);
\coordinate (v18) at (1,-2);
\coordinate (v19) at (-1,-2);

\draw
(v1)--(v2)
(v2)--(v3)
(v3)--(v4)
(v4)--(v1)
(v2)--(v5)
(v2)--(v6)
(v2)--(v7)
(v6)--(v8)
(v6)--(v9)
(v9)--(v10)
(v9)--(v11)
(v9)--(v12)
(v1)--(v15)
(v15)--(v16)
(v15)--(v17)
(v3)--(v18)
(v3)--(v19)
;

\fill 
(v1) circle (4pt)
(v2) circle (4pt)
(v3) circle (4pt)
(v4) circle (4pt)
(v5) circle (4pt)
(v7) circle (4pt)
(v8) circle (4pt)
(v9) circle (4pt)
(v10) circle (4pt)
(v11) circle (4pt)
(v12) circle (4pt)
(v6) circle (4pt)
(v15) circle (4pt)
(v16) circle (4pt)
(v17) circle (4pt)
(v18) circle (4pt)
(v19) circle (4pt)
;
\end{tikzpicture} 
\end{minipage}

\begin{minipage} [b] {0.5\hsize}
\begin{tikzpicture}
\coordinate (v1) at (0,1);
\coordinate (v2) at (1,0);
\coordinate (v3) at (0,-1);
\coordinate (v4) at (-1,0);
\coordinate (v5) at (2,1);
\coordinate (v6) at (2,0);
\coordinate (v7) at (2,-1);
\coordinate (v8) at (3,0.5);
\coordinate (v9) at (3,-0.5);
\coordinate (v10) at (4,0);
\coordinate (v11) at (4,-0.5);
\coordinate (v12) at (4,-1);

\coordinate (v13) at (2,-0.4) node at (v13) {$v$};

\coordinate (v14) at (1.5,-3.5) node at (v14) {$G'$};

\coordinate (v15) at (0,2);
\coordinate (v16) at (1,3);
\coordinate (v17) at (-1,3);
\coordinate (v18) at (1,-2);
\coordinate (v19) at (-1,-2);

\draw
(v2)--(v5)
(v2)--(v6)
(v2)--(v7)
(v6)--(v8)
(v6)--(v9)
(v9)--(v10)
(v9)--(v11)
(v9)--(v12)
(v1)--(v15)
(v15)--(v16)
(v15)--(v17)
(v3)--(v18)
(v3)--(v19)

;

\fill 
(v1) circle (4pt)
(v2) circle (4pt)
(v3) circle (4pt)
(v4) circle (4pt)
(v5) circle (4pt)
(v7) circle (4pt)
(v8) circle (4pt)
(v9) circle (4pt)
(v10) circle (4pt)
(v11) circle (4pt)
(v12) circle (4pt)
(v6) circle (4pt)
(v15) circle (4pt)
(v16) circle (4pt)
(v17) circle (4pt)
(v18) circle (4pt)
(v19) circle (4pt)
;
\end{tikzpicture} 
\end{minipage}

\begin{minipage} [b] {0.5\hsize}
\begin{tikzpicture}
\coordinate (v6) at (2,0);
\coordinate (v8) at (3,0.5);
\coordinate (v9) at (3,-0.5);
\coordinate (v10) at (4,0);
\coordinate (v11) at (4,-0.5);
\coordinate (v12) at (4,-1);

\coordinate (v13) at (2,-0.4) node at (v13) {$v$};

\coordinate (v14) at (3,-3.5) node at (v14) {$T_v$};

\draw
(v6)--(v8)
(v6)--(v9)
(v9)--(v10)
(v9)--(v11)
(v9)--(v12)
;

\fill 
(v8) circle (4pt)
(v9) circle (4pt)
(v10) circle (4pt)
(v11) circle (4pt)
(v12) circle (4pt)
(v6) circle (4pt)
;

\end{tikzpicture} 
\end{minipage}
\end{tabular}
}
\caption{Graph $G$ and the subtree which has $v$ as the root}
\label{subtreenorei}

\end{figure}

\bigskip

(``If")
Suppose that graph $G$ satisfies the condition that there exists an integer
$\delta \ge 2$ such that
$\deg (v) = \delta$ if $v \in V_c$ and $\deg (v) =\delta-1$ if $v \in V_t$.

By Proposition~\ref{bipGoren}, it is enough to
show that $\alpha\in\ZZ^n$, where 
$$
\alpha(v)=
\left\{
\begin{array}{cl}
\delta-1 & \mbox{if } v \in V_c \cup  V_t\\
1 & \mbox{if } v \in V_p
\end{array}
\right.
$$
satisfies conditions (i)--(iv) in Proposition~\ref{bipGoren}.

\bigskip

\noindent
(ii)
Note that $v$ is not a cut vertex
if and only if either 
(a) $\deg(v)=1$ or 
(b) $v \in V_c$ and $\deg(v)=2$ (then $G=C$).
In both cases, we have $\alpha(v)=1$.

\bigskip

\noindent
(iii)
If
$\deg(v)\ge 2$, then $v \in V_c \cup V_t$ and hence
we have $\alpha(v)=\delta -1$.

\bigskip

\noindent
(iv)
Suppose that 
$G[S\cup\Gamma(S)]$ and $G[(V_1\setminus S)\cup(V_2\setminus \Gamma(S))]$ are connected.
Then $S$ satisfies one of the following:

\bigskip

\noindent{\bf Case 1.}
($V_c \cap S = \emptyset$)
Since $G[S\cup\Gamma(S)]$ is connected, $G[S\cup\Gamma(S)]$ is $T_v$, where $v\in\Gamma(S)$.
Since $S$ and $\Gamma(S)$ give a partition of the vertex set of bipartite graph $G[S\cup\Gamma(S)]$,  the sum of the degree sequence of $S$ is equal to that of $\Gamma(S)$ 
in $G[S\cup\Gamma(S)]$.
However, $\deg_G (v) \ne \deg_{G[S\cup\Gamma(S)]} (v)$. 
If $v \in V_t$, $\deg_G (v) = \deg_{G[S\cup\Gamma(S)]} (v) +1$.
Since $(S \cup \Gamma(S)) \cap V_c = \emptyset$,
\begin{eqnarray*}
\alpha(S)-\alpha(\Gamma(S))&=&\sum_{v'\in S}\deg_{G}(v')-\sum_{v'\in \Gamma(S)}\deg_{G}(v')\\
&=&\sum_{v'\in S}\deg_{G[S\cup\Gamma(S)]}(v')-(1+\sum_{v'\in \Gamma(S)}\deg_{G[S\cup\Gamma(S)]}(v'))\\
&=&-1.
\end{eqnarray*}
If $v \in V_c$,
$\deg_G (v) = \deg_{G[S\cup\Gamma(S)]} (v) +2$.
Since $(S \cup \Gamma(S)) \cap V_c = \{v\}$,
\begin{eqnarray*}
\alpha(S)-\alpha(\Gamma(S))&=&\sum_{v'\in S}\deg_{G}(v')-(-1+\sum_{v'\in \Gamma(S)}\deg_{G}(v'))\\
&=&\sum_{v'\in S}\deg_{G[S\cup\Gamma(S)]}(v')-(-1+2+\sum_{v'\in \Gamma(S)}\deg_{G[S\cup\Gamma(S)]}(v'))\\
&=&-1.
\end{eqnarray*}

\bigskip

\noindent{\bf Case 2.}
($V_c \subset (S \cup \Gamma(S))$)
Similar to Case 1, $S$ and $\Gamma(S)$ are a partition of the vertex set of bipartite graph $G[S\cup\Gamma(S)]$, and the sum of the degree sequence of $S$ is equal to that of $\Gamma(S)$ in $G[S\cup\Gamma(S)]$.
Since $G[(V_1\setminus S)\cup(V_2\setminus \Gamma(S))]$ is connected, only an edge $\{i,j\}$ links $G[S\cup\Gamma(S)]$ and $G[(V_1\setminus S)\cup(V_2\setminus \Gamma(S))]$, where $i\in\Gamma(S),j\in V_1\setminus S$. This gives $\deg_G (i) = \deg_{G[S\cup\Gamma(S)]} (i) +1$. Hence, $\alpha(S)-\alpha(\Gamma(S))=-1$.

\bigskip

\noindent{\bf Case 3.}
(otherwise)
For $m \in \NN$, suppose that we have
$V_c \cap S =\{v_2,v_4,\dots,v_{2 m}\}$ and
$V_c \cap \Gamma(S) = \{v_1,v_3,\dots,v_{2m+1}\}$
by rearranging indices if necessary.
Then $G[S\cup\Gamma(S)]$ is a graph whose edge set is $\{ \{v_1,v_2\},\dots, \{v_{2m},v_{2m+1}\} \} \cup T_{v_1} \cup T_{v_2} \cup \dots \cup T_{v_{2m+1}}$.
By Case 1, $\alpha(S \cap T_{v_i})-\alpha(\Gamma(S \cap T_{v_i}))= (-1)^i$. 
If $T_{v_i}$ is $K_1$, then $\deg(v_i)=2$ and $\alpha(v_i)=1$.
Even if $v_i\in G[S\cup\Gamma(S)]$, we can treat it as a vertex which is a root of a tree.
Since $v_1,v_{2m+1}\in\Gamma(S)$,
$\alpha(S)-\alpha(\Gamma(S))=m-(m+1)=-1$.

\bigskip

In any case, we have $\alpha(S)-\alpha(\Gamma(S))=-1$. 

\bigskip

\noindent
(i) 
Let $T_{v_1}\neq K_1$ and
let $S$ be a subset of $V_1$ (or $V_2$) such that $G[S\cup\Gamma(S)]=T_{v_1}$.
By (iv) Case 1, $\alpha(S)-\alpha(\Gamma(S))=-1$. 
By (iv) Case 3, $\alpha(V_2 \setminus \Gamma(S))-\alpha(V_1 \setminus S)=-1$.
Hence, $\alpha(V_1)-\alpha(V_2)=0$.
If $T_1$ is $K_1$, then $\alpha(v_1)=1$.
Since $\alpha(V_1 \setminus {v_1})-\alpha(V_2)=-1$ by (iv) Case 3, we obtain $\alpha(V_1)-\alpha(V_2)=0$.

\bigskip

Hence, by Proposition~\ref{bipGoren}, $\PMSG$ is Gorenstein.

\bigskip

(``Only if")
Suppose that $\PMSG$ is Gorenstein.
Since any even cycle satisfies condition (iv) in Theorem~\ref{pseudotreeGorensteinODD},
we may assume that $G$ is not an even cycle.
Then we have $V_p \neq \emptyset$.
By Proposition~\ref{bipGoren}, there exist $\delta$ and $\alpha$ satisfying conditions (i)--(iv).
It then follows that $\delta\PMSG$ has 
$\alpha \in \ZZ^n$, where 
$$
\alpha(v)=
\left\{
\begin{array}{cl}
\delta-1 & \mbox{if } v \in V_c \cup  V_t\\
1 & \mbox{if } v \in V_p
\end{array}
\right.
$$
as an interior lattice point.

\bigskip

\noindent
{\bf Case 1.} ($V_c$ has a vertex which is not a cut vertex)
Since every vertex $v$ in $V_c$ satisfies $\deg(v)\ge2$, we obtain $\delta-1=1$ and
hence $\delta=2$, $\alpha=(1,\dots,1)$.
Since $G$ is not an even cycle, there exist
$S \subset V_p$ and $v \in V_t \cup V_c$ such that $\Gamma(S) = \{v\}$.
Since $G[S\cup\Gamma(S)]$ is $K_2$ or star graph,  $\alpha(S)-\alpha(\Gamma(S))\ge0$.
Both $G[S\cup\Gamma(S)]$ and $G[(V_1\setminus S)\cup(V_2\setminus \Gamma(S))]$ are connected.
From Proposition~\ref{bipGoren}, $\PMSG$ is not Gorenstein.
This is a contradiction.

\bigskip

\noindent
{\bf Case 2.} (any vertex in $V_c$ is a cut vertex)
Let $v\in V_t \cup V_c$.
Then $v$ is a cut vertex.
Let $S$ be a subset of $V_1$ (or $V_2$) such that $G[S\cup\Gamma(S)]$ is a subtree $T_v$ which has $v\in\Gamma(S)$ as a root.
Let $v'\in S$ be a child of $v$. 
If $\deg_G(v')\ge 2$, then there exists a subset $S' \subset \Gamma(S)$ such that $G[S'\cup\Gamma(S')]$ is a rooted subtree of $G[S\cup\Gamma(S)]$ with root $v'\in \Gamma(S')$ .
By condition (iv), $ \alpha(\Gamma(S'))-\alpha (S')= 1$.
If $\deg(v')=1$, then $\alpha(v')=1$.
Since $\alpha(v)=\delta-1$, 
$$-1=\alpha(S)-\alpha(\Gamma(S))=\sum_{v' \mbox{\tiny{ is a child of }} v} 1 -(\delta -1)= \deg_{G[S\cup\Gamma(S)]} (v)-\delta+1.$$
If $v \in V_c$, then $$\deg_G (v)=\deg_{G[S\cup\Gamma(S)]} (v) +2=\delta.$$
If $v \in V_t$, then $$\deg_G (v) = \deg_{G[S\cup\Gamma(S)]} (v) +1=\delta-1.$$
\end{proof}

\begin{example}
The perfectly matchable subgraph polytope $\PMSG$ of the graph $G$ in Figure~\ref{gorennorei} is Gorenstein.

\begin{figure}
\scalebox{0.8}{
\begin{tikzpicture}

\coordinate  (v1) at (1,1);
\coordinate (v2) at (1,-1);
\coordinate (v3) at (-1,-1);
\coordinate (v4) at (-1,1);
\coordinate (v5) at (3,1);
\coordinate (v6) at (1,3);
\coordinate (v7) at (5,1);
\coordinate (v8) at (3,3);
\coordinate (v15) at (5,3);
\coordinate (v16) at (7,1);
\coordinate (v9) at (3,-1);
\coordinate (v10) at (1,-3);
\coordinate (v11) at (-1,-3);
\coordinate (v12) at (-3,-1);
\coordinate (v13) at (-5,-1);
\coordinate (v14) at (-3,-3);
\coordinate (v17) at (-7,-1);
\coordinate (v18) at (-5,-3);
\coordinate (v19) at (-1,3);
\coordinate (v20) at (-3,1);
\coordinate (v21) at (-5,1);
\coordinate (v22) at (-3,3);

\draw
(v1)--(v2)
(v2)--(v3)
(v3)--(v4)
(v4)--(v1)
(v1)--(v5)
(v1)--(v6)
(v5)--(v7)
(v7)--(v15)
(v7)--(v16)
(v5)--(v8)
(v2)--(v9)
(v2)--(v10)
(v3)--(v11)
(v3)--(v12)
(v12)--(v13)
(v12)--(v14)
(v13)--(v17)
(v13)--(v18)
(v4)--(v19)
(v4)--(v20)
(v20)--(v21)
(v20)--(v22)
;

\fill
(v1) circle (4pt)
(v2) circle (4pt)
(v3) circle (4pt)
(v4) circle (4pt)
(v5) circle (4pt)
(v6) circle (4pt)
(v7) circle (4pt)
(v8) circle (4pt)
(v9) circle (4pt)
(v10) circle (4pt)
(v11) circle (4pt)
(v12) circle (4pt)
(v13) circle (4pt)
(v14) circle (4pt)
(v15) circle (4pt)
(v16) circle (4pt)
(v17) circle (4pt)
(v18) circle (4pt)
(v19) circle (4pt)
(v20) circle (4pt)
(v21) circle (4pt)
(v22) circle (4pt)

;

\end{tikzpicture}

}
\caption{$\deg(v_c)=4$ and $\deg(v_t)=3$}
\label{gorennorei}
\end{figure}
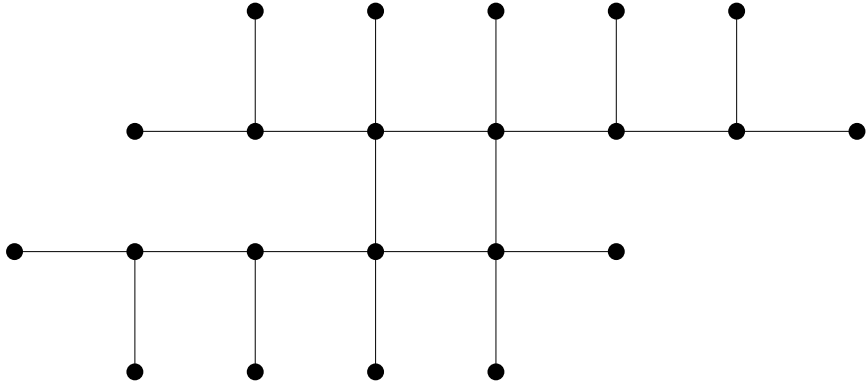

\end{example}




\subsection{Pseudotrees without even cycles}

In this subsection, we give a proof of Theorem \ref{pseudotreeGorensteinODD} for a pseudotree which has no even cycles.

A graph $G$ is said to be {\it $h$-perfect} if the stable set polytope $\stab (G)$ is defined by the constraints corresponding to cliques and odd holes, and the following nonnegativity constraints:
\begin{eqnarray}
x(v) \geq & 0 &  \mbox{ for any } v \in V(G)\\
\label{sspclique}
x(K) \leq &1 & \mbox{ for any maximal clique } K \mbox{ in } G\\
x(C) \leq&  n & \mbox{ for any odd cycle } C \mbox{ in } G \mbox{ of length } 2n+1
\end{eqnarray}
In particular, any perfect graph is $h$-perfect.
An {\it odd subdivision} of a graph $G$ is a graph obtained by replacing each edge of $G$ 
by a path of odd length. 
Let $C_5 + e$ be the graph obtained by adding a new edge to the cycle of length 5.
It is known \cite[Theorem 5]{Cao} that 
$L(G)$ is $h$-perfect if and only if $G$ has no odd subdivision of $C_5 + e$.
Since any pseudotree has at most one cycle, we have the following immediately.

\begin{lemma}
\label{peudotreeishperfect}
Let $G$ be a pseudotree.
Then $L(G)$ is $h$-perfect.
\end{lemma}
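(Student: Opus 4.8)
The plan is to invoke the cited characterization \cite[Theorem 5]{Cao}: $L(G)$ is $h$-perfect if and only if $G$ contains no odd subdivision of $C_5+e$ as a subgraph. So it suffices to show that a pseudotree cannot contain an odd subdivision of $C_5+e$. First I would recall that $C_5+e$ has exactly two cycles (the original $5$-cycle and the triangle formed by the new chord, together with the $4$-cycle they share — in fact three cycles counting the outer $4$-cycle); more to the point, $C_5+e$ contains two distinct cycles sharing a path, hence two edge-disjoint cycles would arise after contracting, but the cleanest statement is: $C_5+e$ has cyclomatic number $2$, i.e. $|E|-|V|+1 = 2$.

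The key step is the observation that odd subdivision preserves the cyclomatic number: replacing an edge by a path adds the same number of vertices as edges, so $|E|-|V|$ is invariant under subdivision. Therefore any odd subdivision $H$ of $C_5+e$ satisfies $|E(H)|-|V(H)|+1 = 2$, so $H$ contains at least two independent cycles and in particular is not a pseudotree; moreover any subgraph of $G$ containing $H$ also has cyclomatic number $\ge 2$. Since a pseudotree $G$ has at most one cycle, its cyclomatic number is at most $1$, so $G$ cannot contain $H$ as a subgraph. Hence $G$ has no odd subdivision of $C_5+e$, and by \cite[Theorem 5]{Cao}, $L(G)$ is $h$-perfect.

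I do not expect any real obstacle here; the argument is essentially a cyclomatic-number count combined with a black-box citation. The only point requiring a moment's care is making precise what ``odd subdivision of $C_5+e$ as a subgraph'' means in \cite{Cao} — whether it refers to an induced subgraph or an arbitrary subgraph, and whether subdivisions of already-present edges are allowed — but in all reasonable readings the cyclomatic-number obstruction applies verbatim, since adding vertices/edges only increases the count. An alternative, even shorter route is simply to note that any pseudotree is itself (isomorphic to) a subgraph of a graph whose every block is an edge or a single cycle, and such graphs are already known to have $h$-perfect (indeed, near-perfect) line graphs via the clique-sum structure used in the proof of Proposition~\ref{pseudotree normal}; but the $C_5+e$ criterion gives the most direct one-line deduction.
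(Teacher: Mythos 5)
Your proposal is correct and follows essentially the same route as the paper: the paper also invokes the criterion of Cao and Nemhauser and dismisses the possibility of an odd subdivision of $C_5+e$ on the grounds that a pseudotree has at most one cycle, which is exactly your cyclomatic-number observation spelled out in more detail. No gap here.
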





Let $K$ be a field. 
The \textit{Ehrhart ring} of a lattice polytope $P \subset \RR^n$ is 
$$K[{\xb}^{\alpha} s^{m}:\alpha\in mP\cap\ZZ^{n},m\in\ZZ_{\ge 0}]\subset K[x^{\pm}_1,\dots,x^{\pm}_n,s].$$
It is known that the Ehrhart ring of $P$
coincides with the toric ring of $P$
if and only if $P$ has IDP.
For the Ehrhart ring of $\stab(G)$, the following fact is known.

\begin{prop}[{\cite[Theorem 3.8]{Miy}}]
\label{hstabGor}
Let $G$ be an $h$-perfect graph.
Then the Ehrhart ring of $\stab (G)$ is Gorenstein if and only if 
all maximal cliques of $G$ have the same cardinality (say $\omega$),
and that $G$ satisfies one of the following conditions{\rm :}
\begin{itemize}
    \item[{\rm (i)}]
    $\omega=1${\rm ;}
    \item[{\rm (ii)}]
    $\omega=2$ and $G$ has no induced odd cycles of length $\ge 7${\rm ;}
    \item[{\rm (iii)}]
    $\omega \ge 3$ and $G$ has no induced odd cycles of length $\ge 5$.
\end{itemize}
\end{prop}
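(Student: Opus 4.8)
The plan is to reduce the statement to a small linear feasibility problem by combining two facts recalled in the introduction: the Ehrhart ring of a lattice polytope is Gorenstein exactly when the polytope is Gorenstein, and a full‑dimensional lattice polytope $\mathscr{P}\subset\RR^N$ is Gorenstein of index $\delta$ if and only if there are a positive integer $\delta$ and a lattice point $\alpha\in\delta(\mathscr{P}\setminus\partial\mathscr{P})\cap\ZZ^N$ with $\delta\mathscr{P}-\alpha$ reflexive. Writing the irredundant description $\mathscr{P}=\{x:\langle a_i,x\rangle\le b_i\}$ with the $a_i$ primitive integral facet normals, reflexivity of $\delta\mathscr{P}-\alpha$ is equivalent to $\langle a_i,\alpha\rangle=\delta b_i-1$ for every facet $i$. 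So everything comes down to the question: for which $h$‑perfect $G$ does this linear system, in $\alpha\in\ZZ^{V}$ and $\delta\in\ZZ_{>0}$ with $\alpha$ forced to be interior, have a solution? (The statement is about the Ehrhart ring, so no IDP assumption on $\stab(G)$ is needed.)

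First I would use $h$‑perfectness to fix the facet description of $\stab(G)\subset\RR^{V}$: it is cut out by $x(v)\ge0$ for $v\in V$, by $x(K)\le1$ for maximal cliques $K$, and by $x(C)\le n$ for induced odd cycles $C$ of length $2n+1\ge5$. The nonnegativity inequalities are always facet‑defining for a stable set polytope, since the stable sets avoiding a fixed $v$ are exactly the stable sets of $G-v$ and span an $(|V|-1)$‑dimensional face; hence any admissible $\alpha$ must satisfy $\alpha(v)=1$ for all $v$, i.e. $\alpha=\mathbf{1}$. (When some maximal clique has size $\ge 2$, every vertex lies in a maximal clique, so $\mathbf{1}$ is moreover the unique interior lattice point at the relevant level.) Substituting $\alpha=\mathbf{1}$ into the clique equations yields $|K|=\delta-1$ for every maximal clique $K$, so Gorensteinness forces all maximal cliques to share a common size $\omega$ and $\delta=\omega+1$. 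Substituting $\alpha=\mathbf{1}$ into the equation coming from any facet‑defining odd hole $C$ of length $2n+1$ yields $2n+1=(\omega+1)n-1$, i.e. $(\omega-1)n=2$, which for $n\ge2$ forces $n=2$ and $\omega=2$. Thus Gorensteinness requires that either $G$ carries no facet‑defining odd hole (cases $\omega=1$ and $\omega\ge3$) or $\omega=2$ and every facet‑defining odd hole has length $5$. Conversely, if all maximal cliques have size $\omega$ and one of (i)--(iii) holds, one checks directly that $\alpha=\mathbf{1}$, $\delta=\omega+1$ satisfies every facet equation ($\omega=1$: only the box facets $0\le x_v\le1$; $\omega=2$: nonnegativity, edge and the permitted length‑$5$ hole facets, where $(\omega-1)\cdot2-1=1$; $\omega\ge3$: nonnegativity and clique facets only) and that $\mathbf{1}\in\operatorname{int}(\delta\stab(G))$, since $0<1<\delta$, $\omega<\delta$ on cliques, and $2n+1<(\omega+1)n$ on a length‑$5$ hole. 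This matches conditions (i)--(iii), giving the equivalence.

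The main obstacle I expect is the bookkeeping about which odd‑hole inequalities are genuinely facet‑defining for $\stab(G)$, as opposed to being implied by clique and shorter‑hole inequalities: the Gorenstein/reflexivity condition constrains only true facets, so in the "only if" direction I must rule out an $h$‑perfect $G$ carrying a long induced odd cycle whose inequality happens to be redundant. The resolution should use the structure of $h$‑perfect graphs, for instance by passing to the face of $\stab(G)$ supported on $V(C)$, which is $\stab(C_{2n+1})$, and propagating the reflexivity constraint down to it, so that a long induced odd cycle always obstructs Gorensteinness when $(\omega-1)n\ne2$. Once that point is settled, the rest is the routine linear algebra sketched above, and the split into (i)--(iii) is immediate from the single Diophantine equation $(\omega-1)n=2$ together with $n\ge2$.
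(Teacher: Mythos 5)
This statement is quoted in the paper from Miyazaki \cite[Theorem 3.8]{Miy}; the paper itself gives no proof, so there is nothing internal to compare your argument against. Judged on its own terms, your framework is sound and the arithmetic is right: $h$-perfectness gives a complete (though not necessarily irredundant) inequality description of $\stab(G)$, the nonnegativity inequalities are always facets and force $\alpha=\mathbf{1}$, the maximal clique inequalities are always facets (Padberg) and force a common clique size $\omega$ with $\delta=\omega+1$, and a facet-defining odd hole of length $2n+1$ forces $(\omega-1)n=2$, which yields exactly the trichotomy (i)--(iii). The ``if'' direction is essentially complete: every inequality in the $h$-perfect list satisfies the reflexivity equation for $\alpha=\mathbf{1}$, $\delta=\omega+1$, the facets are a subset of that list, and $\mathbf{0}$ is automatically the unique interior lattice point of a polytope all of whose facets lie at lattice distance one from the origin.

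The genuine gap is the one you flag yourself, and your proposed repair does not close it. In the ``only if'' direction you must rule out an $h$-perfect $G$ with $\omega=2$ containing an induced $C_{2n+1}$, $n\ge 3$ (or $\omega\ge3$ containing any induced odd hole) whose inequality $x(C)\le n$ is \emph{redundant}: a redundant inequality imposes no reflexivity equation, so your facet computation says nothing about it. Your fix is to pass to the face $F=\stab(G)\cap\{x_v=0:v\notin V(C)\}\cong\stab(C_{2n+1})$ and ``propagate the reflexivity constraint down to it,'' but the Gorenstein/reflexive property is not inherited by faces of a lattice polytope (unlike compressedness), so this step is unjustified as stated. A workable alternative stays inside the cone criterion for the Ehrhart ring: since interior lattice points of $m\,\stab(G)$ are exactly the integer points satisfying \emph{all} the listed inequalities strictly (redundancy is irrelevant for interiority), one can exhibit an explicit interior lattice point such as $\beta=\mathbf{1}+\rho(V(C))$ at a suitable level $m$ and check that $\beta-\mathbf{1}=\rho(V(C))\notin(m-\delta)\stab(G)$ because $(2n+1)/(m-\delta)>n$; verifying that $\beta$ really is interior (in particular against other odd holes meeting $C$) is where the remaining work lies. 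Until one of these routes is carried out, the ``only if'' direction is incomplete.
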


A {\it star} graph is a complete bipartite graph $K_{1,n}$.
Then $L(K_{1,n})$ is $K_n$. 
On the other hand, we have $L(C_3) = K_3$.
It is known that, in general, each clique in $L(G)$ corresponds to a star or to a triangle in $G$.
From this fact, we have the following.

\begin{proof}[Proof of Theorem \ref{pseudotreeGorensteinODD} (when $G$ has no even cycles)]
From Proposition~\ref{doukei}, 
$K[\PMSG]$ is Gorenstein if and only if $K[{\rm Stab}(L(G))]$ is Gorenstein.
In the proof of Proposition \ref{pseudotree normal},
we proved that ${\rm Stab}(L(G))$ is normal.
Since the lattice spanned by ${\rm Stab}(L(G)) \cap \ZZ^m $ is equal to $\ZZ^m$, 
${\rm Stab}(L(G))$ has IDP.
Hence, 
the toric ring of ${\rm Stab}(L(G))$ 
coincides with the Ehrhart ring of ${\rm Stab}(L(G))$.
Thus, $K[\PMSG]$  is Gorenstein if and only if 
the Ehrhart ring of ${\rm Stab}(L(G))$ is Gorenstein.

From Lemma \ref{peudotreeishperfect}, $L(G)$ is $h$-perfect.
Hence, by Proposition \ref{hstabGor}, 
the Ehrhart ring of ${\rm Stab}(L(G))$ is Gorenstein if and only if $L(G)$ satisfies one of conditions (i)--(iii) in Proposition~\ref{hstabGor}.
These conditions are
equivalent to the following, respectively:
\begin{itemize}
    \item[(i)]
$G$ is $K_1$ or $K_2$.
    \item[(ii)]
$G$ is either a path of length $\ge 2$ or $C_5$.
    \item[(iii)]
    $G$ is either a bidegreed tree which is not a path, or 
$G$ has a triangle $C$ where $\deg(v) \in \{2,3\}$ if $v \in V(C)$,
    and $\deg(v) \in \{1,3\}$ if $v \in V \setminus V(C)$.
\end{itemize}
Thus, (i)--(iii) above hold if and only if 
$K[\PMSG]$ is Gorenstein.
\end{proof}

\subsection{Complete multipartite graphs}

\begin{prop}
\label{completebipariteGor}
Let $G$ be a complete bipartite graph $K_{p,q}$ ($p\le q$). Then $K[\PMSG]$ is Gorenstein 
(equivalently, $\PMSG$ is Gorenstein) if and only if either $p=1$ or $p=q$.
\end{prop}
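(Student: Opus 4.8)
The plan is to decide Gorensteinness of the lattice polytope $\PMSG$ and then transfer it to $K[\PMSG]$: since $K_{p,q}$ is connected and bipartite, Proposition~\ref{IDP} gives that $\PMSG$ has IDP, hence $K[\PMSG]$ is Gorenstein if and only if $\PMSG$ is Gorenstein, which also establishes the parenthetical equivalence in the statement. After this reduction I would distinguish three cases: $p=1$; $2\le p=q$; and $2\le p<q$.

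When $p=1$ the graph $K_{1,q}$ is a star, and Theorem~\ref{2Gor} is not available because the centre is a cut vertex as soon as $q\ge 2$; I would argue directly. If $q=1$ then $G=K_2$ and $K[\PMSG]$ is a polynomial ring. If $q\ge 2$ then $K_{1,q}$ is a bidegreed tree, so $K[\PMSG]$ is Gorenstein by Theorem~\ref{pseudotreeGorensteinODD}(i). (A self-contained alternative: $\mathscr{W}(K_{1,q})=\{\emptyset\}\cup\{\{c,v_i\}:i\in[q]\}$, so deleting the coordinate of the centre $c$ identifies $\PMSG$ unimodularly with the standard simplex $\Delta_q=\mathrm{conv}(\mathbf{0},\mathbf{e}_1,\dots,\mathbf{e}_q)$; since $(q+1)\Delta_q-\mathbf{1}$ is reflexive, $\PMSG$ is Gorenstein of index $q+1$.)

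When $p\ge 2$ the graph $K_{p,q}$ is $2$-connected and all vertices have degree $\ge p\ge 2$, so Theorem~\ref{2Gor} applies: $\PMSG$ is Gorenstein if and only if $K_{p,q}$ has a perfect matching and $|S|+1=|\Gamma(S)|$ holds for every $\emptyset\ne S\subsetneq V_1$ for which both $G[S\cup\Gamma(S)]$ and $G[(V_1\setminus S)\cup(V_2\setminus\Gamma(S))]$ are connected. If $p<q$ then $|V_1|\ne|V_2|$, so there is no perfect matching and $\PMSG$ is not Gorenstein. If $p=q$, then $K_{p,p}$ has a perfect matching, and since the graph is complete bipartite every nonempty $S\subset V_1$ has $\Gamma(S)=V_2$; hence $G[(V_1\setminus S)\cup(V_2\setminus\Gamma(S))]=G[V_1\setminus S]$ is edgeless, and is connected only when $|V_1\setminus S|=1$, i.e.\ $|S|=p-1$, in which case $|\Gamma(S)|=p=|S|+1$. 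Thus the criterion of Theorem~\ref{2Gor} is satisfied and $\PMSG$ is Gorenstein. (One may instead apply Corollary~\ref{pmsedgeGor} together with the known description of when ${\rm Ed}(K_{p,q})$ is Gorenstein.)

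This reduces to routine verification once Theorem~\ref{2Gor} is in hand; the only step needing care is the case $p=1$, which lies outside the hypotheses of Theorem~\ref{2Gor} and must be handled separately, together with the bookkeeping of the degenerate subcases ($q=1$, and the single set $S$ with $|S|=p-1$ when $p=q$). I do not anticipate a genuine obstacle beyond getting these boundary cases right.
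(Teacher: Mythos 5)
Your proposal is correct and follows essentially the same route as the paper: the case $p=1$ is settled via Theorem~\ref{pseudotreeGorensteinODD} for the bidegreed tree $K_{1,q}$, and for $p\ge 2$ the criterion of Theorem~\ref{2Gor} (equivalently Proposition~\ref{sagaichi}) is checked exactly as you do, using that $\Gamma(S)=V_2$ forces $S=V_1\setminus\{v\}$ in the connectivity condition. The only cosmetic difference is that you invoke Proposition~\ref{IDP} explicitly for the parenthetical equivalence and offer an optional direct simplex argument for the star, neither of which changes the substance.
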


\begin{proof}
If $p=1$, then $G$ is a star graph. 
By Theorem \ref{pseudotreeGorensteinODD}, $\PMSG$ is Gorenstein.
Let $1<p\le q$. 
Then every vertex $v$ of $G$ is not a cut vertex and satisfies $\deg(v) \ge 2$. 
By Theorem~\ref{2Gor}, $\PMSG$ is not Gorenstein if $p \neq q$ since
$G$ has no perfect matchings.
If $p=q$, then $G$ has a perfect matching.
Since $G$ is complete bipartite, $\Gamma(S) = V_2$
for $\emptyset \ne S \subsetneq V_1$.
Hence $G[S\cup\Gamma(S)]$ and $G[(V_1\setminus S)\cup(V_2\setminus \Gamma(S))]$ are connected if and only if
$S=V_1\setminus\{v\}$ for some $v\in V_1$.
If $S=V_1\setminus\{v\}$, then $p=|S| +1 = |\Gamma(S)|$.
By Proposition~\ref{sagaichi}, $\PMSG$ is Gorenstein.
\end{proof}

Let $\Pc \subset \RR^n$ be a lattice polytope and let $\alpha \in \RR^n \setminus {\rm aff}(\Pc)$.
Then the convex hull of $\Pc \cup \{\alpha\}$ is called the \textit{pyramid} over $\Pc$ with apex $\alpha$.
In general, if $\Pc'$ is a pyramid over $\Pc$,
then $K[\Pc]$ is isomorphic to a polynomial ring in one variable over $K[\Pc']$.

\begin{prop}
\label{11n}
Let $G$ be a complete multipartite graph $K_{1,1,q}$ ($q \ge 1$).
Then $K[\PMSG]$ is Gorenstein if and only if $q \le 2$.
\end{prop}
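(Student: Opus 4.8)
The plan is to analyze the structure of $K_{1,1,q}$ block by block, using the fact that two of its parts are singletons. Write $V_1 = \{u_1\}$, $V_2 = \{u_2\}$, and $W = \{w_1,\dots,w_q\}$ for the three parts. The edges are $\{u_1,u_2\}$ together with all $\{u_i,w_j\}$. The key observation is that $K_{1,1,q}$ is \emph{not} bipartite (it contains the triangle $u_1 u_2 w_j$), so Proposition~\ref{bipGoren} and Theorem~\ref{2Gor} do not apply directly; instead I would go through the facet description of Proposition~\ref{inequalityANY}, Proposition~\ref{nonbipartitefacet}, and Sullivant's criterion, or else reduce to a known polytope.

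\textbf{Small cases.} For $q=1$, $K_{1,1,1} = K_3$, and $\PMSG$ for $K_3$ is easily computed: $\mathscr{W}(K_3) = \{\emptyset,\{1,2\},\{1,3\},\{2,3\}\}$, so $\PMSG$ is the simplex with vertices ${\bf 0}, \rho(\{1,2\}), \rho(\{1,3\}), \rho(\{2,3\})$, hence $K[\PMSG]$ is a polynomial ring and Gorenstein. For $q=2$, $K_{1,1,2} = K_4$ minus an edge, which is the 4-cycle $C_4$ (on $u_1,w_1,u_2,w_2$) \emph{plus} the chord $\{u_1,u_2\}$; I would compute $\mathscr{W}$ directly and identify $\PMSG$ up to unimodular equivalence — I expect it to be Gorenstein (this is consistent with Example in the introduction where $C_4$ gives a Gorenstein polytope, and the extra edge only adds $\{u_1,u_2,w_1\}$-type matchings). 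For $q=3$: here I would try to show $K[\PMSG]$ is \emph{not} Gorenstein, which is the substantive direction.

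\textbf{The only-if direction.} The cleanest route is: $K_{1,1,q}$ contains $K_{1,1,2}$... no — rather, note that $K_{1,1,q}$ contains the complete bipartite graph $K_{2,q}$ as a spanning subgraph (on parts $\{u_1,u_2\}$ and $W$). For $q \ge 3$, $K_{2,q}$ has no perfect matching, and one can try to exploit a facet of $\PMSG$ coming from $S = \{w_1\}$ (a single vertex, which by the remark after Proposition~\ref{nonbipartitefacet} automatically satisfies conditions (i),(iii); condition (ii) needs $G\setminus(S\cup\Gamma(S))$ nonbipartite — here $\Gamma(\{w_1\}) = \{u_1,u_2\}$ so that complement is $\{w_2,\dots,w_q\}$ with no edges, i.e. bipartite, so this particular $S$ does \emph{not} give a facet). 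So instead I would take $S$ to be a triangle or use $S = \{w_1, w_2\}$ with both components being single vertices — but $G[\{w_1,w_2\}]$ has two components of odd size (single vertices), so $S \in \mathscr{T}$; then $\Gamma(S) = \{u_1,u_2\}$ and $G\setminus(S\cup\Gamma(S))$ is again edgeless, not giving a facet. This suggests the relevant facets are $x(u_i) \le 1$ and $x(w_j) \ge 0$ and the triangle-type inequality $x(S) - x(\Gamma(S)) \le |S|-\theta(S)$ with $S = \{u_1, u_2, w_j\}$ (a critical component). \textbf{The main obstacle} is pinning down exactly which inequality forces non-Gorensteinness for $q \ge 3$: I would compute, via Proposition~\ref{bipGoren}'s non-bipartite analogue — i.e. directly impose the Gorenstein condition that $\delta\PMSG$ has an interior lattice point $\alpha$ with $\alpha \cdot \ab_i = b_i + 1$ on every facet — and derive a contradiction from the facet inequalities, the symmetry of the $w_j$ (forcing $\alpha(w_1) = \cdots = \alpha(w_q)$), and the relations $\alpha(u_i) = \delta - 1$ (from $\deg(u_i)\ge 2$, $u_i$ not a cut vertex when $q\ge 2$), which over-determine $\alpha$ once $q \ge 3$. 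Alternatively, and perhaps more cleanly: $K_{1,1,q}$ is a pyramid-like or cone construction is \emph{not} available, but one might relate $\PMS_{K_{1,1,q}}$ to $\PMS_{K_{2,q}}$ via an extra coordinate — if $\PMS_{K_{1,1,q}}$ were a pyramid over something related to $\PMS_{K_{2,q}}$, then Gorensteinness would reduce to that of $\PMS_{K_{2,q}}$, which fails for $q\ge 3$ by Proposition~\ref{completebipariteGor}. I would pursue this reduction first, as it is the shortest path; the fallback is the explicit facet computation. The if-direction for $q\le 2$ is then just the two explicit small-case checks above.
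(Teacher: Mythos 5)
Your instinct to reduce to $\PMS_{K_{2,q}}$ is exactly the route the paper takes, but in your write-up it remains an unproven conditional (``\emph{if} $\PMS_{K_{1,1,q}}$ were a pyramid over something related to $\PMS_{K_{2,q}}$\dots''), and that conditional is precisely the content of the proof. The missing step is this: take $S=W=\{w_1,\dots,w_q\}$, the \emph{whole} third part. Every component of $G[S]$ is a single vertex (hence critical), $\Gamma(S)=\{u_1,u_2\}$, and $G\setminus(S\cup\Gamma(S))$ is \emph{empty}, so condition (ii) of Proposition~\ref{nonbipartitefacet} holds vacuously; deleting the unique edge $\{u_1,u_2\}$ with both ends in $\Gamma(S)$ leaves the connected graph $K_{2,q}$, so condition (iii) holds as well, and $x(W)-x(u_1)-x(u_2)\le 0$ is facet-inducing. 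This is where your facet analysis went astray: you tested $S=\{w_1\}$ and $S=\{w_1,w_2\}$, for which the complement is a nonempty edgeless (hence bipartite) graph and condition (ii) fails, and you concluded that no facet of this type exists, overlooking the case $S=W$ where the complement is empty. Since $W$ is independent, any matchable set either uses the edge $\{u_1,u_2\}$ (forcing it to be exactly $\{u_1,u_2\}$) or matches each used $w_j$ to a distinct $u_i$ (forcing $x(W)=x(u_1)+x(u_2)$); hence the lattice points on this facet are exactly $\PMS_{K_{2,q}}\cap\ZZ^{q+2}$ and the unique lattice point off it is $\eb_1+\eb_2$, so $\PMSG$ is a pyramid over $\PMS_{K_{2,q}}$ with apex $\eb_1+\eb_2$. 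Then $K[\PMSG]$ is a polynomial ring in one variable over $K[\PMS_{K_{2,q}}]$, and Proposition~\ref{completebipariteGor} finishes the argument for all $q$ at once (including $q\le 2$, so the ad hoc small-case checks are not needed).

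Your fallback — imposing an interior lattice point condition on $\delta\PMSG$ directly — has an additional gap: for non-bipartite $G$ that criterion characterizes Gorensteinness of the \emph{Ehrhart ring} (equivalently, of the polytope), which only transfers to the toric ring $K[\PMSG]$ if IDP is established, and you have not established IDP or normality for $K_{1,1,q}$. The pyramid argument sidesteps this entirely, since a pyramid changes the toric ring only by a polynomial extension.
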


\begin{proof}
Let $G=K_{1,1,q}$ on the vertex set
$\{1\} \cup \{2\} \cup \{3, 4, \dots, q+2\}$ and $G'=K_{2,q}$
on the vertex set
$\{1,2\} \cup \{3, 4, \dots, q+2\}$.
For $S=\{3,4,\ldots, q+2\}$, since every component of $G[S]$ is a single vertex,
and since the graph obtained from $G[S\cup\Gamma(S)]$ by deleting all edges with both ends in $\Gamma(S)$ is
the connected graph $G'$,
$$x(S)-x(\Gamma(S)) =x_3+x_4+\dots+x_{q+2}-(x_1+x_2)\le |S| - \theta(S)=q-q=0$$
is facet-inducing for $\PMSG$.
In addition, the facet is $\PMSGd$.
It then follows that $\PMSG$ is a pyramid over $\PMSGd$ with apex $\eb_1 + \eb_2$.
Hence the toric ring $K[\PMSG]$ is isomorphic to a polynomial ring in one variable over $K[\PMSGd]$.
Thus $K[\PMSG]$ is Gorenstein if and only if $K[\PMSGd]$ is Gorenstein.
From Proposition~\ref{completebipariteGor}, $K[\PMSGd]$ is Gorenstein if and only if $q \le 2$.
\end{proof}

\begin{prop}
\label{completeGor}
Let $G$ be a complete graph $K_n$.
Then $K[\PMSG]$ is Gorenstein if and only if $n \le 4$.
\end{prop}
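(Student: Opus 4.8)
The plan is to split into the cases $n\le 4$, handled by positive arguments, and $n\ge 5$, where I show that Gorensteinness fails by inspecting the facets of $\PMSG$ given in Propositions~\ref{inequalityANY} and~\ref{nonbipartitefacet}. Since $G=K_n$ is complete, $\mathscr{W}(K_n)=\{S\subseteq[n]:|S|\text{ even}\}$, so $\PMSG\cap\ZZ^n=\{\rho(S):|S|\text{ even}\}$, and the sublattice $L$ of $\ZZ^n$ spanned by these points is $\{x\in\ZZ^n:\sum_i x_i\text{ even}\}$ (one gets $\rho(\{i,j\})+\rho(\{i,k\})-\rho(\{j,k\})=2e_i$, and the $e_i-e_j$ from differences), an index-$2$ sublattice whose dual is $L^\ast=\ZZ^n\cup\bigl(\tfrac12{\bf 1}+\ZZ^n\bigr)$, where ${\bf 1}=(1,\dots,1)$.

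For $n\le2$, $K[\PMSG]$ is a polynomial ring, as noted at the beginning of this section. For $n=3$, $K_3=C_3$ is a pseudotree with a triangle $C$ whose three vertices all have degree $2$, and no vertex lies outside $V(C)$, so condition~(iii) of Theorem~\ref{pseudotreeGorensteinODD} holds and $K[\PMSG]$ is Gorenstein. For $n=4$ I would show $K[\PMSG]$ is a complete intersection: $\PMSG$ has the eight vertices ${\bf 0}$, the $\rho(e)$ with $e\in E(K_4)$, and ${\bf 1}=\rho([4])$, so $K[\PMSG]$ has eight degree-one generators and Krull dimension $\dim\PMSG+1=5$, whence $I_\PMSG$ has height $3$. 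It contains the three binomials $y_ey_{e'}-y_\emptyset y_{[4]}$, one for each perfect matching $\{e,e'\}$ of $K_4$ (since $\rho(e)+\rho(e')={\bf 1}={\bf 0}+\rho([4])$); under a term order making $y_\emptyset y_{[4]}$ the smallest of the six monomials involved, the leading terms $y_{12}y_{34},\,y_{13}y_{24},\,y_{14}y_{23}$ involve pairwise disjoint variables, so the three binomials form a Gr\"obner basis of --- and a regular sequence generating --- an ideal of height $3$, and one checks this ideal equals $I_\PMSG$ (e.g.\ that the complete intersection it cuts out is prime, or by comparing Hilbert series). Thus $K[\PMSG]$ is a complete intersection, hence Gorenstein. (Equivalently: $\PMSG$ is compressed by Theorem~\ref{compressed}, hence normal, and $2\PMSG-{\bf 1}$ is reflexive with respect to $L$.)

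For $n\ge5$ I would proceed as follows. Since $\PMSG$ is normal, $K[\PMSG]$ is a normal semigroup ring, hence Cohen--Macaulay, and by the standard criterion it is Gorenstein if and only if there exist $\delta\ge1$ and a lattice point $\alpha$ in the interior of $\delta\PMSG$ with $\langle a_i,\alpha\rangle=\delta b_i-1$ for every facet inequality $\langle a_i,x\rangle\le b_i$ of $\PMSG$ written with $a_i\in L^\ast$ primitive. By Propositions~\ref{inequalityANY} and~\ref{nonbipartitefacet}, for $n\ge4$ the facets of $\PMSG$ are $-x_v\le0$ and $x_v\le1$ for each $v$ (the face $x_v=0$ is a copy of ${\mathcal P}_{K_{n-1}}$, of dimension $n-1$ because $n-1\ge3$) together with $x(S)-x(V\setminus S)\le|S|-1$ for every odd-size $S$; since $\tfrac12(\rho(S)-\rho(V\setminus S))=\tfrac12{\bf 1}-\rho(V\setminus S)$ already lies in $L^\ast$, the $L^\ast$-primitive form of that last inequality is $\bigl\langle\tfrac12(\rho(S)-\rho(V\setminus S)),x\bigr\rangle\le\tfrac{|S|-1}{2}$. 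Now the facets $-x_v\le0$ force $\alpha_v=1$ for every $v$, i.e.\ $\alpha={\bf 1}$; but ${\bf 1}\in L$ only when $n$ is even, which already settles every odd $n\ge5$. If $n\ge6$ is even, the facets $x_v\le1$ then give $1=\alpha_v=\delta-1$, so $\delta=2$, and the odd-set facet with $|S|=3$ forces $\bigl\langle\tfrac12(\rho(S)-\rho(V\setminus S)),{\bf 1}\bigr\rangle=\tfrac12\bigl(3-(n-3)\bigr)=3-\tfrac n2$ to equal $\delta-1=1$, i.e.\ $n=4$ --- a contradiction. Hence $K[\PMSG]$ is not Gorenstein for any $n\ge5$.

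The main obstacle is the input ``$\PMSG$ is normal'' used in the $n\ge5$ step: for $n\le4$ it is free from compressedness, but $\PMSG$ is not compressed once $n\ge5$, so one must establish normality separately --- e.g.\ via total dual integrality of the defining inequality system, or by a direct argument that every $y\in k\PMSG\cap L$ is the row-sum vector of an $n\times k$ zero--one matrix all of whose column sums are even. A secondary, purely bookkeeping, point is handling the non-standard lattice $L$ and its dual $L^\ast$ correctly --- in particular that $\rho(S)-\rho(V\setminus S)$ equals twice the $L^\ast$-vector $\tfrac12{\bf 1}-\rho(V\setminus S)$ and hence is not $L^\ast$-primitive.
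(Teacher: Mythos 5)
Your case analysis for $n\le 4$ and the facet computation for $n\ge 5$ are sound as far as they go: the lattice $L=\{x\in\ZZ^n:\sum_i x_i \mbox{ even}\}$, its dual, the primitivity of $\tfrac12(\rho(S)-\rho(V\setminus S))$ in $L^{*}$, and the forced values $\alpha=\mathbf{1}$, $\delta=2$ all check out, and the inequalities you invoke are indeed facet-inducing by Proposition~\ref{nonbipartitefacet}. The genuine gap is the one you flag yourself: for $n\ge 5$ the whole argument rests on the interior-lattice-point criterion for Gorensteinness, which is valid only for normal semigroup rings, i.e.\ only once $K[\PMSG]$ is known to coincide with the ring spanned by all of $\bigcup_k (k\PMSG\cap L)$. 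Since $\PMSG$ is not compressed for $n\ge 5$, normality does not come for free, and without it the criterion is simply unavailable (a non-normal toric ring can be Gorenstein, and a normal completion being non-Gorenstein says nothing a priori about the toric ring itself). As written, the $n\ge 5$ half is therefore conditional on an unproved input, and your proposed fixes (total dual integrality, or the even-column-sum matrix argument) remain sketches.

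The paper closes exactly this gap by a different route: since $\mathscr{W}(K_n)=\{S\subset V:|S|\equiv 0\ (\mathrm{mod}\ 2)\}$, the polytope $\PMSG$ is the cut polytope of the cycle $C_n$; normality is then Laurent's theorem on Hilbert bases of cuts \cite{Lau}, and the classification is read off from the characterization of Gorenstein cut polytopes in \cite{OCut}, which yields $n\le 4$ immediately. If you add the cut-polytope identification (or otherwise establish normality), your facet computation becomes a self-contained replacement for the citation of \cite{OCut}. For $n\le 3$ both you and the paper fall back on Theorem~\ref{pseudotreeGorensteinODD}, and your complete-intersection argument for $n=4$ is a workable substitute as well, though the claim that the three matching binomials generate all of $I_{\PMSG}$ still requires the primality or Hilbert-series verification you defer; the parenthetical route via compressedness and reflexivity of $2\PMSG-\mathbf{1}$ is cleaner and consistent with your $n\ge5$ analysis.
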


\begin{proof}
By Theorem \ref{pseudotreeGorensteinODD},
$K[\PMSG]$ is Gorenstein if $n \le 3$.

Let $n \ge 4$.
By the definition of perfectly matchable subgraphs, 
we have 
$$\mathscr{W}(G) = \{ S \subset V : |S|  \equiv 0 \mbox{ (mod 2)}\},$$
since $G$ is complete.
Then $\PMSG$ is the {\it cut polytope} of a cycle $C_n$ of length $n$
(see \cite{Lau}) which is normal.
It is known \cite[Theorem 3.4]{OCut} that the toric ring of
the cut polytope of a graph $H$ is Gorenstein if and only if $H$ has no $K_5$-minor and satisfies one of the following:
\begin{itemize}
    \item[(i)] $H$ is a bipartite graph without induced cycle of length $\geq6$;
    \item [(ii)] $H$ is a bridgeless chordal graph.
\end{itemize}
Hence, the toric ring of the cut polytope of $C_n$ is Gorenstein if and only if $n \le 4$.
\end{proof}



It is an interesting problem to characterize complete multipartite graphs $G$ such that $K[\PMSG]$ is Gorenstein.
However, $\PMSG$ does not have IDP if $G$ is not bipartite. 
In addition, the normality of $\PMSG$ is unknown 
except for $K_{p,q}$, $K_{1,1,q}$, and $K_n$.

\vspace{0.5cm}
\leftline{\textbf{Availability of Data, Material and Code} Not applicable.}
\vspace{0.5cm}
\leftline{\textbf{\large Declaration}}
\vspace{0.5cm}
\leftline{\textbf{Conflict of Interest} The author declares that he has no conflict of interest.}

\section*{Acknowledgment}
The author is grateful to an anonymous referee for his/her careful reading and helpful comments.

\end{document}